\def\l{{\lambda}}
\def\e{{\varepsilon}}
\def\beq{\begin{equation}}
\def\eeq{\end{equation}}
\newcommand{\Z}{{\mathbb Z}}
\newcommand{\R}{{\mathbb R}}
\newcommand{\C}{{\mathbb C}}
\newcommand{\MM}{{\mathrm M}}
\newcommand{\PP}{{\mathbb P}}
\newcommand{\CD}{{\mathcal D}}
\newcommand{\CS}{{\mathcal S}}
\newcommand{\tr}{\mathrm{tr} }
\newcommand{\SVG}{\mathrm{(SVG)} }
\newcommand{\FI}{\mathrm{(FI)} }
\newtheorem{theorem}{Theorem}
\newtheorem{remark}{Remark}
\newtheorem{lemma}{Lemma}
\newtheorem{defi}{Definition}
\newtheorem{prop}{Proposition}
\newtheorem{corollary}{Corollary}
\newtheorem{example}{Example}
\begin{document}

\title[Equivalent Conditions for Domination]{Equivalent Conditions for Domination of $\mathrm{M}(2,\C)$-sequences}
\author{Chang Sun and Zhenghe Zhang}

\address{Department of Mathematics, University of California, Riverside, CA-92521, USA}
\email{chang.sun@email.ucr.edu}

\address{Department of Mathematics, University of California, Riverside, CA-92521, USA}
\email{zhenghe.zhang@ucr.edu}

\begin{abstract}
	It is well known that a $\mathrm{SL}(2,\C)$-sequence is uniformly hyperbolic if and only it satisfies a uniform exponential growth condition. Similarly, for  $\mathrm{GL}(2,\C)$-sequences whose determinants are uniformly bounded away from zero, it has dominated splitting if and only if it satisfies a uniform exponential gap condition between the two singular values. Inspired by \cite{quas}, we provide a similar equivalent description in terms of singular values for $\mathrm{M}(2,\C)$-sequences that admit dominated splitting. We also prove a version of the Avalanche Principle for such sequences.
\end{abstract}

\maketitle
\tableofcontents

\section{Introduction}

Throughout this paper, we consider $B\in\ell^\infty(\Z, \MM(2,\C)$. In particular, we fix a large $M>0$ so that $\sup_{j\in\Z}\|B(j)\|<M$, where $\|A\|$ always denotes the operator norm for $A\in\MM(2,\C)$. Throughout this paper, unless otherwise stated, all matrices are assumed to be nonzero and to have operator norm bounded above by $M$. We also let $C>0, c>0$ be universal constants, where $C$ is large and $c$ is small. We define
\begin{equation*}\label{eq:cocycleiteration}
B_n(j)=\begin{cases}B(j+n-1)\cdots B(j), & n\ge1,\\ I_2 , & n=0,
\end{cases}
\end{equation*}
where $I_2$ is the identity matrix and
$$
B_{-n}(j)= [B_{n}(j-n)]^{-1}=B(j-n)^{-1}\cdots B(j-1)^{-1}, \ n\ge 1,
$$
if all matrices involved are invertible. Regarding $\mathrm{M}(2,\C)$-sequences admitting dominated splitting, the following definition is introduced in \cite{alkornzhang} and is proved to be appropriate. 
	\begin{defi}\label{d:domination}
	We say that $B:\Z\to\mathrm{M}(2,\C)$ admits if for each $j\in\Z$, there are one-dimensional spaces $E^u(j)$ and $E^s(j)$ of $\C^2$ with the following properties:
	
	\begin{enumerate}[(a)]
		\item $E^u,E^s$ are $B$--invariant in the sense that for all $j\in\Z$, it holds that
		$$
		B(j)[E^u(j)] \subseteq E^u(j+1) \mbox{ and } B(j)[E^s(j)]\subseteq E^s(j+1).
		$$
		\item There exist $N \in \Z_{+}$ and $\lambda>1$ such that 
		$$
		\|B_{N}(j)\vec u(j)\|>\lambda \|B_N(j)\vec s(j)\|
		$$
		for all $j\in\Z$ and all unit vectors $\vec u(j)\in E^u(j)$ and $\vec s(j)\in E^s(j)$.
		\item $\inf_{j\in\Z}d\big(E^u(j), E^s(j)\big)> 0$.
		\item  For the same $N$ from (2), it holds that $\inf_{j\in\Z}\|B_N(j)\|>0$.
	\end{enumerate} 
\end{defi}

A similar definition may be found at \cite{quas} for sequence of bounded linear operators on a Banach space. Condition (b) or (c) implies that $E^s(j)\neq E^u(j)$, hence $\C^2=E^s(j)\oplus E^u(j)$ for all $j\in\Z$. In condition (c), $d(W,V)$ denotes a distance between two one-dimensional spaces $W$ and $V$ of $\C^2$. For its definition, see equation \eqref{eq:DistofComplexLine} at the beginning of Section~\ref{s:obtainDomination}. Here we say the space $E^u$ dominates the space $E^s$. As noted in \cite{alkornzhang}, under conditions (a)-(c), condition (d) actually implies something stronger:
\[
\inf_{j\in\Z}\|B_n(j)\|>0\mbox{ for all } n\in\Z_+.\label{eq:equiv_c4} \tag*{(d)'}
\]
 We shall sometimes use or prove condition (d)'. From now on, $B \in \CD\CS$ means $B$ has dominated splitting. 

We have the following equivalent conditions for $\CD\CS$-sequences. For any nonzero matrix $A\in\MM(2,\C)$, we define $\sigma_1(A)=\|A\|$ and $\sigma_2(A)=\frac{|\det(A)|}{\|A\|}$ which are the so-called singular values of $A$. One may find more detailed information about the singular values of $A$ at the beginning of Section~\ref{s:obtainDomination}. Let $\mu>1$ and $0<\e<\mu$ where $\e$ may be arbitrarily small. Then we define:
\begin{align*}
&\mbox{(SVG)}\hskip 0.5cm\sup_{j\in\Z}\left\{\frac{\sigma_2(B_n(j))}{\sigma_1(B_{n+1}(j))},\  \frac{\sigma_2(B_n(j+1))}{\sigma_1(B_{n+1}(j))}\right\}<C\mu^{-n} \mbox{ for all }n\ge 0.\\
&\mbox{(FI)}\hskip 0.8cm \sup_{j\in\Z}\left\{\frac{\sigma_1(B_n(j))}{\sigma_1(B_{n+1}(j))},\  \frac{\sigma_1(B_n(j+1))}{\sigma_1(B_{n+1}(j))}\right\}<C\mu^{(1-\e)n} \mbox{ for all }n\ge 1,
\end{align*}
where following the terminology of \cite{quas} $\SVG$ stands for singular value gap and $\FI$ stands for fast invertibility. Our main goal is to prove the following theorem: 

\begin{theorem}\label{t:main}
Let  $B\in\ell^{\infty}(\Z,\mathrm{M}(2,\C))$. $B \in \CD\CS$ if and only if $B$ satisfies both $\mathrm{(SVG)}$ and $\mathrm{(FI)}$.
	\end{theorem}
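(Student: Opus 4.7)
I plan to prove the two implications of Theorem~\ref{t:main} separately.

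For the necessity direction, I pick unit vectors $\vec u(j)\in E^u(j)$ and $\vec s(j)\in E^s(j)$ and set $a_n(j)=\|B_n(j)\vec u(j)\|$, $b_n(j)=\|B_n(j)\vec s(j)\|$. The uniform angle bound in (c) together with the identity $|\det B_n(j)|=a_n(j)\,b_n(j)\,\sin\theta(j+n)/\sin\theta(j)$, where $\theta(j)$ is the angle between $E^u(j)$ and $E^s(j)$, yields $\sigma_1(B_n(j))\asymp a_n(j)$ and $\sigma_2(B_n(j))\asymp b_n(j)$ with implicit constants depending only on $\inf_j\sin\theta(j)$. Iterating (b) gives $b_n(j)/a_n(j)\le C\mu_0^{-n}$ for some $\mu_0>1$. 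Condition (d) combined with the angle bound forces $a_N(j)\ge c_0>0$ uniformly in $j$; applying this at shifted starting times and using $\|B(k)\|\le M$ to telescope, I obtain the uniform one-step lower bound $\|B(k)\vec u(k)\|\ge c_0/M^{N-1}$, whence $a_n(j)/a_{n+1}(j)$ is uniformly bounded. This gives $\FI$ trivially, and
\[
\frac{b_n(j)}{a_{n+1}(j)}=\frac{b_n(j)}{a_n(j)}\cdot\frac{a_n(j)}{a_{n+1}(j)}\le C'\mu_0^{-n}
\]
gives $\SVG$; the shifted version with $j+1$ in the numerator is handled identically.

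For sufficiency, I first combine $\SVG$ with the trivial step-bound $\sigma_1(B_{n+1}(j))\le M\sigma_1(B_n(j))$ to extract the one-scale gap $\sigma_2(B_n(j))/\sigma_1(B_n(j))\le C\mu^{-n}$. Using this gap I define $E^s(j)$ as the limit of the bottom right singular direction of $B_n(j)$ and $E^u(j)$ as the limit of the top left singular direction of $B_n(j-n)$ as $n\to\infty$; both are Cauchy in $\C\PP^1$ at geometric rate. Invariance (a) is inherited from the cocycle identity $B_{n+1}(j-n)=B(j)B_n(j-n)$, with the convention $B(j)E^u(j)=\{0\}\subseteq E^u(j+1)$ in any degenerate case. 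Condition (d) with $N=1$ is immediate from the $n=0$ case of $\SVG$: $1/\|B(j)\|=\sigma_2(I_2)/\sigma_1(B_1(j))\le C$, so $\inf_j\|B(j)\|\ge 1/C>0$.

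The main obstacle is the uniform transversality (c) together with the quantitative domination (b). The two candidate subspaces at time $j$ are built from genuinely different limiting procedures — a backward-extended product for $E^u(j)$ and a forward product for $E^s(j)$ — so a priori nothing prevents $E^u(j)=E^s(j)$, in which case $\vec u(j)$ would lie in $\ker B(j)$ and (b) would collapse. The plan is to compare, for large $N$, the top right singular direction of $B_N(j)$ with the preimage under $B_n(j-n)$ of the top left singular direction of $B_{N+n}(j-n)$, and to quantify closeness through the cocycle identity $B_{N+n}(j-n)=B_N(j)B_n(j-n)$. The crucial input is $\FI$: its lower bound $\sigma_1(B_{n+1}(j))/\sigma_1(B_n(j))\ge C^{-1}\mu^{-(1-\e)n}$ prevents the approximate top singular direction from being collapsed when $B_n(j-n)$ is composed with $B_N(j)$. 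Because the $\e$-slack in $\FI$ is comfortably dominated by the full $\mu^{-n}$ in $\SVG$, the combination yields uniform transversality (c) and explicit constants $N,\lambda>1$ realizing (b).
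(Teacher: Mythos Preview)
Your necessity argument and your construction of $E^s,E^u$ for sufficiency are essentially correct and match the paper's Lemmas~\ref{l:DominationImpliesWSVG}--\ref{l:SVGImpliesExistence}; the paper just packages the forward direction via the conjugation $D(j+1)^{-1}B(j)D(j)=\Lambda(j)$ to a diagonal sequence, which is your $a_n,b_n$ bookkeeping in matrix form. The $n=0$ case of $\SVG$ giving condition~(d) is exactly Lemma~\ref{l:uLargeNorm}.

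The genuine gap is in the transversality step. Your plan is to run a direct quantitative comparison through $B_{N+n}(j-n)=B_N(j)B_n(j-n)$ and invoke $\FI$ to prevent collapse of the approximate expanding direction. But to push $\vec u(j)\approx u_n(j)=B_n(j-n)\cdot s_n^\perp(j-n)$ forward under $B_N(j)$ you need a lower bound on $\sigma_1(B_{N+n}(j-n))/\sigma_1(B_n(j-n))$, and a \emph{single} use of $\FI$ does not give this: iterating $\FI$ from length $n$ to $n+N$ costs a factor $\prod_{k=n}^{n+N-1}\mu^{(1-\e)k}\sim\mu^{(1-\e)Nn}$, which for fixed $N>1$ grows faster in $n$ than the $\mu^{-n}$ you gain from $\SVG$, and the comparison does not close. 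The explicit-bound route of \cite{quas} works precisely because it assumes the stronger uniform $\FI$ of \eqref{eq:sFI}, where iteration only costs $C^N$; with the paper's genuinely weaker $\FI$ your sketch does not go through as written. A second issue you do not address is uniformity: even granting $E^s(j)\ne E^u(j)$ pointwise, nothing in your outline gives $\inf_j d(E^s(j),E^u(j))>0$.

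The paper's substitute is a soft argument you are missing entirely. It first treats the \emph{singular} case $\det B(j_0)=0$ for some $j_0$ (Lemma~\ref{l:EsNeqEu_singularCase}): there $E^u(j_0+1)=\mathrm{Im}\,B(j_0)$ exactly, so $\vec u(j_0+1)$ can be written as $B(j_0)\vec v/\|B(j_0)\vec v\|$ with $\vec v=\vec s^\perp_{n+1}(j_0)$, giving $\|B_n(j_0+1)\vec u(j_0+1)\|\ge\sigma_1(B_{n+1}(j_0))/M$ and requiring only \emph{one} application of $\FI$ to bound $\|B_n\vec s\|/\|B_n\vec u\|\le C\mu^{-\e n}$. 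For the remaining case $\det B(j)\ne 0$ for all $j$, the paper embeds $B$ into its hull $\Omega=\overline{\{T^kB:k\in\Z\}}$ (Proposition~\ref{p:seq_to_cocycle_svgFI}), passes $\SVG$ and $\FI$ to the hull cocycle by continuity of $\sigma_1,\sigma_2$, and argues by compactness: either $\inf_j|\det B(j)|>0$ and one reduces to $\mathrm{SL}(2,\C)$, or some accumulation point $\tilde\omega\in\Omega$ has $\det\tilde\omega_0=0$, whence $E^s(\tilde\omega)\ne E^u(\tilde\omega)$ by the singular case, contradicting $E^s\equiv E^u$ on the dense orbit. The uniform lower bound on $d(E^s,E^u)$ then comes for free from continuity of $E^s,E^u$ on the compact $\Omega$.
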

	
	 Domination is  an intensively studied notion in dynamical systems which naturally generalizes the notion of uniform hyperbolicity. It was introduced by and plays a key role in the works of Ma\~ n\'e \cite{mane}  and Liao \cite{liao} on Smale' s stability conjecture. The term \textit{dominated splitting} was introduced by Ma\~n\'e in \cite{mane}. We refer the readrs to \cite{bochiviana, bonatti, pujals} and the referenes therein regarding recent works in smooth dynamical systems involving domination.
	
	 In this paper, we focus on equivalent conditions for domination via singular values. For bounded $\mathrm{SL}(2,\C)$-sequences, we can use conditions (a) and (b) to define uniform hyperbolicity. It is straightforward to see that for such sequences, $\mathrm{(SVG)}$ is equivalent to the uniform exponential growth $\mathrm{(UEG)}$ condition 
	\[
	\inf_{j\in\Z}\|B_n(j)\|\ge C\mu^{\frac n2} \mbox{ for all } n\ge 1.
	\]
	It is well-known that uniform hyperbolicty of such sequences is equivalent to $\mathrm{(UEG)}$, see e.g. \cite{yoccoz} or \cite{zhang2} for detailed information. 
	
	In the context of bounded $\mathrm{GL}(2,\C)$-sequences whose determinants are unifromly bounded away from zero, one only needs conditions (a) and (b) of Definition~\ref{d:domination} to define domination. For such sequences, domination is equivalent to $\mathrm{(SVG)}$, which is easily seen to be equivalent to a weaker form as:
	\begin{equation}\label{eq:wSVG}
	\sup_{j\in\Z}\frac{\sigma_2(B_n(j))}{\sigma_1(B_{n}(j))}<C\mu^{-n}, \mbox{ for all } n\ge 1.
	\end{equation}
	Indeed, for such sequences, one can convert them into $\mathrm{SL}(2,\C)$-sequences by considering 
	\[
	A(j)=\frac1{\sqrt{\det(B(j))}}B(j).
	\] 
	Then all claimed results above follows from those of $\mathrm{SL}(2,\C)$-sequences.  One may also see \cite{bochigourmelon} for related discussion.
	
	In the context of general bounded $\mathrm{GL}(2,\C)$-sequences, $\SVG$ is strictly stronger than \eqref{eq:wSVG}. Indeed, by choosing $n=0$, one can easily see that $\SVG$ implies $\inf_{j\in\Z}\|B(j)\|>0$ (see e.g. the proof of Lemma~\ref{l:uLargeNorm}) while \eqref{eq:wSVG} doesn't gurantee that. However, even for $\mathrm{GL}(2,\C)$-sequences satisfying $\inf_{j\in\Z}\|B(j)\|>0$, $\SVG$ is still strictly stronger than  \eqref{eq:wSVG}. This is because we can let $|\det(B(j))|$ to be arbitrarily small. For instance, by choosing $B_n(j)$ and $B(j+n)$ appropriately, one can make $\frac{\sigma_2(B_n(j))}{\sigma_1(B_{n+1}(j))}$ arbitrarily large  while keeping $\frac{\sigma_2(B_{n+1}(j))}{\sigma_1(B_{n+1}(j))}$ arbitrarily small. $\SVG$ was first introduced in \cite{blumenthalmorris} where it is used to show domination for injective operator-valued cocycles, namely $B(j)$'s are allowed to be injective bounded linear operators on a Banach space. In fact, even in the context of $\MM(2,\C)$, $\SVG$ arises naturally in showing the existence of the invariant directions $E^u$ and $E^s$, see the proof of Lemma~\ref{l:SVGImpliesExistence}.
	
	While $\SVG$ is sufficient to show existence of $E^u$ and $E^s$, it is however not sufficent to gurantee $E^s\neq E^u$. That is the place where $\FI$ is used. $\FI$ was first introduced in \cite{quas}, which if translated into the context of Theorem~\ref{t:main} can be rewritten as:
	\begin{equation}\label{eq:sFI}
	\sup_{j\in\Z, n\ge 1}\frac{\sigma_1(B_n(j))}{\sigma_1(B_{n+1}(j))}<C.
	\end{equation}
	Similar to \cite{blumenthalmorris}, \cite{quas} considers domination for sequences of bounded linear operators on a Banach space. $\FI$ is introduced to avoid the injectiveness assumption of \cite{blumenthalmorris}. We wish to point out that even in the context of general $\mathrm{GL}(2,\C)$-sequences (hence, one has injectiveness), $\SVG$ is not suffcient to show $E^u\neq E^s$, see e.g. Example~\ref{example} and its remark. Indeed, \cite{blumenthalmorris} considered cocycles defined over compact toplogical spaces where injectiveness implies 
	uniform lower bound of $|\det(B(j))|$ in the context of Theorem~\ref{t:main}. Compared with Theorem~\ref{t:main}, the main result of \cite{quas} is stronger in the sense that they considered more general sequences, obtained explicit lower bounds on $d(E^s, E^u)$, and adopted a more general version of $\SVG$ which allows them to remove the uniform boundedness assumption on $\|B(j)\|$. 
	
	However, while it is in the simplest context, bounded $\MM(2,\C)$-sequences still keep the essential difficulty. Thus it is worthwhile to explore domination in the simplest nontrivial scenario as it allows one to see the dynamics relatively clearly. Compared with \cite{quas}, one of the main advantages of approach is that the proof of Theorem~\ref{t:main} is significantly simpler, which is based on a soft argument used in \cite{zhang2}. Basically, instead of obtaining explict lower bound of $d(E^s(j), E^u(j))$, we prove $E^s(j)\neq E^u(j)$ in the singular case and reduce another unresolved case into this case via embedding the sequence into its hull. This soft argument also allows us to weaken the condition \eqref{eq:sFI} to our $\FI$. In fact, by our proof one can see that to get $\inf_{j\in\Z}d(E^s(j),E^u(j))>0$, the real competition lies between $\frac{\sigma_2(B_n(j))}{\sigma_1(B_n(j+1))}$ and $\frac{\sigma_1(B_n(j))}{\sigma_1(B_n(j+1))}$. We really want the decay rate of the former beats the growth rate of the latter. In some sense, we have rigidity of the growth rate of $\frac{\sigma_1(B_n(j))}{\sigma_1(B_n(j+1))}$: as long as it grows slower than the decaying rate of $\frac{\sigma_2(B_n(j))}{\sigma_1(B_n(j+1))}$ (which is of exponential rate by $\SVG$), it is uniformly bounded above.
	 One may see the proof of Lemmas~\ref{l:EsNeqEu_singularCase} and ~\ref{l:EsAwayFromEu} for more detailed information. 
	
	On the other hand, even the stronger version of $\FI$ as stated in \eqref{eq:sFI} is automatically satisfied for bounded $\mathrm{GL}(2,\C)$-sequences with determinants unifromly bounded away from zero. This is due to the following fact:
	\[
	\sigma_1(B_n(j+1))\ge \sigma_1(B_n(j))\sigma_2(B(j+n))=\sigma_1(B_n(j))\frac{|\det(B(j+n))|}{\|B(j+n)\|}\ge c\sigma_1(B_n(j)).
	\]
	
    In summary, $\FI$ arises naturally if one wants to separate $E^s$ and $E^u$ in cases one does not have uniform invertibility of the operators.
    
We also further explore our techinques and prove the following version of the so-called \emph{Avalanche Principle} for $\MM(2,\C)$-sequences. It generalizes \cite[Theorem 5]{zhang2} and consequently, extends all prior versions of the Avalanche Principle for sequences of $2\times 2$ matrices.

\begin{theorem}\label{t:APSingular}
	
	\label{l.avlanche-principle}
	Let $B(j):\Z\to\mathrm{M}(2,\C)$ be that $0<c<\|B(j)\|<M$ for all $j\in\Z$. Suppose there is a $\mu>1$ large such that for each $j$, it holds that:
	\begin{align}\label{condition-AP3}
		&\frac{\sigma_2(B(j))}{\sigma_1(B(j))}\le \mu^{-1}; \\ \label{condition-AP4}
		&\frac{\sigma_1(B(j+1))\sigma_1(B(j))}{\sigma_1(B(j+1)B(j))}\le \mu^{\frac14}.
	\end{align}
	Then $B$ has dominated splitting and it holds for each $j\in\Z$ and each $n\ge 3$ that
	\begin{equation}\label{eq:AP}
		\left|\log\|B_n(j)\|+\sum_{k=1}^{n-2}\log\|B(j+k)\|-\sum_{k=0}^{n-2}\log\|B(j+k+1)B(j+k)\|\right|\le Cn\mu^{-\frac12}.
	\end{equation}
\end{theorem}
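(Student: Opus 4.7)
The plan is to establish a quantitative stabilization estimate for the top left singular vectors $u_n(j)$ of $B_n(j)$, from which both domination and the bound \eqref{eq:AP} follow by short computations. Writing $v_1(j)$ and $u_1(j)$ for the top right and top left singular vectors of $B(j)$, the key lemma I would prove, by joint induction on $n$, is that for a universal $C$ and all $n\ge 1,\,j\in\Z$:
\begin{equation*}
	\inf_{\theta\in\R}\|u_n(j) - e^{i\theta}u_1(j+n-1)\|\le C\mu^{-3/4},
\end{equation*}
together with the auxiliary bounds $\sigma_1(B_n(j))\ge c^n\mu^{-n/4}\prod_{k=0}^{n-1}\sigma_1(B(j+k))$ and $\sigma_2(B_n(j))/\sigma_1(B_n(j))\le C\mu^{-n/2}$. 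The ``$\mu^{-3/4}$'' slack is exactly the product of the alignment lower bound $\mu^{-1/4}$ forced by \eqref{condition-AP4} and the gap bound $\mu^{-1}$ from \eqref{condition-AP3}.

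In the inductive step $n\to n+1$, I would analyze the top eigenvector of $B_{n+1}(j)B_{n+1}(j)^{*}=B(j+n)B_n(j)B_n(j)^{*}B(j+n)^{*}$. The inductive gap on $B_n(j)$ shows that this eigenvector is, up to a perturbation of size $O(\mu^{-3/2})$, the normalization of $B(j+n)u_n(j)$. A direct calculation converts the pair hypothesis \eqref{condition-AP4} into $|\langle u_1(j+n-1),v_1(j+n)\rangle|\ge \mu^{-1/4}(1-O(\mu^{-2}))$, and the inductive estimate on $u_n(j)$ propagates this to $|\langle u_n(j),v_1(j+n)\rangle|\ge c\mu^{-1/4}$. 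Decomposing $u_n(j)$ in the right singular basis of $B(j+n)$ and applying \eqref{condition-AP3}, the normalization of $B(j+n)u_n(j)$ is $O(\mu^{-3/4})$-close to $u_1(j+n)$, giving the lemma for $n+1$. The auxiliary bounds follow from $\sigma_1(B_{n+1}(j))\ge |\langle u_n(j),v_1(j+n)\rangle|\,\sigma_1(B_n(j))\,\sigma_1(B(j+n))$ together with $\sigma_1\sigma_2=|\det|$. These immediately yield (SVG) at rate $\mu^{1/2}$ and (FI) (trivially, since $\sigma_1(B_{n+1}(j))/\sigma_1(B_n(j))$ is uniformly bounded below by $c^2\mu^{-1/4}$), so Theorem~\ref{t:main} gives $B\in\CD\CS$.

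For \eqref{eq:AP}, a routine telescoping rewrites the left-hand side as $\sum_{k=1}^{n-2}(\delta_k-\delta^{(k+2)})$, where $\delta_k:=\log\sigma_1(B(j+k))+\log\sigma_1(B(j+k+1))-\log\sigma_1(B(j+k+1)B(j+k))$ is the single-pair defect and $\delta^{(m)}:=\log\sigma_1(B(j+m-1))+\log\sigma_1(B_{m-1}(j))-\log\sigma_1(B_m(j))$ is the defect of appending the $m$-th matrix. A direct SVD computation writes each such defect as $-\log|\langle u_{\mathrm{left}},v_{\mathrm{right}}\rangle|+O(\mu^{-1/2})$, where the inner product measures the alignment of the top left singular vector of the left factor with the top right singular vector of the right factor. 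The stabilization lemma implies that the inner products appearing in $\delta_k$ and $\delta^{(k+2)}$ differ by $O(\mu^{-3/4})$; since each is at least $c\mu^{-1/4}$, a short logarithmic estimate yields $|\delta_k-\delta^{(k+2)}|\le C\mu^{-1/2}$, and summing over $k$ gives \eqref{eq:AP}. The main obstacle will be closing the joint induction: one must verify that the $\mu^{-3/4}$ angle error does not degrade as $n$ grows, which is why the gap bound $\sigma_2(B_n)/\sigma_1(B_n)\le C\mu^{-n/2}$ must be tracked in parallel, ensuring that the contribution of the secondary direction of $B_n(j)$ to $B_{n+1}(j)$ remains negligible compared to the slack we seek to preserve.
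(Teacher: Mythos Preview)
Your proposal is correct and follows essentially the same route as the paper: both arguments run a joint induction establishing (i) the singular-vector stabilization $d(u_n(j),u_1(j+n-1))\le C\mu^{-3/4}$ and (ii) the gap $\sigma_2(B_n)/\sigma_1(B_n)\le C\mu^{-n/2}$, then read off $\SVG$ and $\FI$ and finish \eqref{eq:AP} by the same telescoping (your $\sum(\delta_k-\delta^{(k+2)})$ is exactly the paper's comparison of $\log\frac{\sigma_1(B(j+k)B_k(j))}{\sigma_1(B(j+k))\sigma_1(B_k(j))}$ with the pair ratio, and your $O(\mu^{-3/4})$ alignment error combined with the $c\mu^{-1/4}$ lower bound gives the $C\mu^{-1/2}$ per-term bound). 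One small omission: you only track the \emph{left} singular vectors $u_n$, which suffices for \eqref{eq:AP} and for one half of $\SVG$/$\FI$, but the second inequalities in $\SVG$ and $\FI$ (the ones involving $B_n(j+1)$) require the symmetric stabilization of the \emph{right} singular vectors $s_n(j)$ near $s_1(j)$---the paper records this as the companion estimate \eqref{eq:AngleControl2}, and you obtain it by running the identical induction on $B_n^*$.
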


Avalanche Principle is first introduced in \cite{goldstein} for finite $\mathrm{SL}(2,\R)$ sequences. Together with large deviation type of estimates for the associated Lyapuonv exponent, it is proved to be a powerful tool in establishing quantative continuity of the Lyapunov exponent and the integrated density of states of the associated ergodic Schr\"odinger operators.
There are numerous generalizations since the original work of Goldstein-Schlag, e.g. orders of the matrices have been generalized from $2$ to any $d\ge 2$, real valued matrices to complex valued ones. We refer the readers to the work of Bourgain-Jitomirskaya \cite[Lemma 5]{bourgainjitomirskaya}, Bourgain \cite[Lemma 2.6]{bourgain}, and Schlag \cite[Lemma 1]{schlag} for of invertible matrices, and Duarte-Klein \cite[Section 2.4]{duarteklein} for  general $\MM(d,\R)$-matrices. 

However, it was first observed in \cite{zhang2} that there is a close relation between uniformly hyperbolic sequence of matrices and the Avalanche Principle in the scenario of $\mathrm{SL}(2,\C)$-sequences. As a consequence, a relatively short and dynamical proof of the Avalanche Principle was obtained in \cite{zhang2}. Likewise, no relation between domination and the Avalanche  Principle for  squences of possibly non-invertible matrices seemed to be explored (in fact, as far as we know, the work of \cite{duarteklein} is the only place where non-invertible finite sequences were explored). One of the main purpose of Theorem~\ref{t:APSingular} is to explore such a relation and provide a relatively short and dynamical proof for $\MM(2,\C)$-sequences. While it is possible for us to consider general $\MM(d,\C)$ sequences, we again wish to do things in the simplest nontrivial setup to emphasize the main ideas and key tools behind the proof. The proof is a combination of the techniques we used to prove Theorem~\ref{t:main} and outline of \cite[Section 4]{zhang2}. Theorem~\ref{t:APSingular} has the  potential to be highly useful in establishing  H\"older continuity of the Lyapunov exponent for $\MM(2,\C)$-cocycles.

	To conclude this section, we further explore the following example from \cite{alkornzhang}. It provides an example in the scenario of bounded $\mathrm{GL(2,\R)}$-sequences whose norm is uniformly bounded from below where $\SVG$ holds true and $\FI$ fails. It also shows that merely $\SVG$ is not enough to guarantee the separation of $E^s$ and $E^u$.

\begin{example}\label{example}

Define $\Lambda(j)=\left(\begin{smallmatrix}2^{2-|j|} & 0 \\ 0 & 2^{-|j|}\end{smallmatrix}\right), D(j)=\left(\begin{smallmatrix}1 & 1 \\ 0  &\   2^{-|j|}\end{smallmatrix}\right)$,
and 
\[
B(j):=D(j+1) \Lambda(j) D(j)^{-1}=\begin{pmatrix}2^{2-|j|} & -3 \\ 0 & 2^{-|j+1|}\end{pmatrix}.
\]
Claim: $B$ satisfies conditions (1), (2), and (4) of Definition~\ref{d:domination} as well as $\SVG$. But $B$ does not satisfies (3) or $\FI$.
\begin{proof}
By construction, $D$ converts the two obivous invariant line section of $\Lambda$ to those of $B$, which are:
\[
E^u(j)=\mathrm{span}\left\{\binom{1}{0}\right\}\mbox{ and } E^s(j)=\mathrm{span}\left\{\binom{1}{2^{-|j|}}\right\}.
\]
One readily checks that for all $j\in\Z$ and all unit vectors $\vec u(j)\in E^u(j)$ and $\vec s(j)\in E^s(j)$:
\[
\|B(j)\vec u(j)\|\ge 2 \|B(j)\vec s(j)\|.
\]
Hence, conditions (a) and (b) of Definition~\ref{d:domination} are satisfied where in (b) we may choose $N=1$. Moreover, it is clear that $\|B(j)\|\ge 3$ so that condition (d) is satisfied. However, by equation \eqref{eq:DistofComplexLine} we have
\[
d(E^u(j), E^s(j))=2|\det(\vec u(j),\vec s(j))|\le 2^{1-|j|}\to 0.
\]
Hence, condtion (c) is not satisfied. 

Next, let us explore $\SVG$ and $\FI$. It is straightforward to see that for all $n\ge 2$:
\begin{align*}
    B_{n}(j)&=D(j+n) \Lambda_{n}(j)D(j)^{-1}\\
    &=\begin{pmatrix}1 & 1 \\ 0 & 2^{-|j+n|}\end{pmatrix}\cdot \begin{pmatrix}2^{2n-\sum^{n-1}_{k=0}|j+k|} & 0 \\ 0 & 2^{-\sum^{n-1}_{k=0}|j+k|}\end{pmatrix}\cdot\begin{pmatrix}1 & -2^{|j|}\\ 0 & 2^{|j|}\end{pmatrix}\\
    &=2^{-\sum^{n-1}_{k=1}|j+k|}\begin{pmatrix}2^{2n-|j|} & -2^{2n}+1 \\ 0 & 2^{-|j+n|}\end{pmatrix}.
\end{align*}
Let $\|A\|_{\max }=\max _{i, j}\left|a_{i j}\right|$. It is a straightforward calculation to see that for all $A\in\MM(2,\C)$:
$$
\|A\|_ {\max }\leq\|A\| \leq 2\|A\|_{\max } 
$$ 
Then for all $j \in \Z$ and $n\geq 2$, we get 
\begin{align}\label{eq:sigma2_example1}
\nonumber \sigma_2(B_{n}(j))
&=\frac{|\det(B_{n}(j))|}{\|B_{n}(j))\|}\le\frac{|\det(B_{n}(j))|}{\|B_{n}(j))\|_{\max}} \\
&\le \frac{2^{-2\sum^{n-1}_{k=1}|j+k|}\cdot 2^{2n-|j|-|j+n|}}{2^{2n-\sum^{n-1}_{k=1}|j+k|}}\\
\nonumber &=2^{-\sum^{n}_{k=0}|j+k|}
\end{align} 
and 
\begin{equation}\label{eq:sigma1_example1}
\sigma_1(B_{n}(j))=\|B_{n}(j))\|\geq \|B_{n}(j))\|_ {\max }\ge 2^{-\sum^{n-1}_{k=1}|j+k|}\cdot (2^{2n}-1),
\end{equation}
which implies for all $n\ge 2$: 
 \begin{align*}
 &\sup_{j\in\Z}\left\{\frac{\sigma_2(B_n(j))}{\sigma_1(B_{n+1}(j))},\ \frac{\sigma_2(B_n(j+1))}{\sigma_1(B_{n+1}(j))} \right\}\\
& \le \sup\left\{\frac{2^{-\sum^{n}_{k=0}|j+k|}}{2^{-\sum^{n}_{k=1}|j+k|}\cdot (2^{2(n+1)}-1)},\ \  \frac{2^{-\sum^{n+1}_{k=1}|j+k|}}{2^{-\sum^{n}_{k=1}|j+k|}\cdot (2^{2(n+1)}-1)}\right\}\\
&\le \frac{1}{2^{2(n+1)}-1}\\
&<\frac{1}{2^{2n}}.
 \end{align*}
For $n=1$, we note that $\sigma_2(B(j))<2^{1-|j|-|j+1|}$ for all $j\in\Z$, which together with \eqref{eq:sigma1_example1} implies:

\begin{align*}
&\sup_{j\in\Z}\left\{\frac{\sigma_2(B(j))}{\sigma_1(B_2(j))},\frac{\sigma_2(B(j+1))}{\sigma_1(B_2(j))} \right\}\\
&\le \sup\left\{\frac{2^{1-|j|-|j+1|}}{2^{-|j+1|}\cdot (2^4-1)},\ \frac{2^{1-|j+1|-|j+2|}}{2^{-|j+1|}\cdot (2^4-1)}\right\}\\
&<\frac{2}{15}.
\end{align*}
For $n=0$, it is clear that we have $\sigma_1(B(j))=\|B(j)\|\ge 3$ which implies
$$
\frac{\sigma_2(B_0(j))}{\sigma_1(B(j))}=\frac{\sigma_2(I_2)}{\sigma_1(B(j))}=\frac{1}{\|B(j)\|}<\frac{1}{3}.
$$
To sum up,  we get for all $n\ge 0$:
$$
\sup_{j\in\Z}\left\{\frac{\sigma_2(B_n(j))}{\sigma_1(B_{n+1}(j))},\ \frac{\sigma_2(B_n(j+1))}{\sigma_1(B_{n+1}(j))} \right\}<\frac{1}{2^{2n}},
$$
which is nothing other than $\mbox{(SVG)}$.

Finally, $\|B_n(j)\|\le 2\|B_n(j)\|_{\max} \le 2^{-\sum^{n-1}_{k=1}|j+k|}\cdot 2^{2n+1}$ and \eqref{eq:sigma1_example1} imply for each $n\ge 2$ that:
\begin{align*}
	\frac{\|B_{n+1}(j)\|}{\|B_n(j+1)\|}&\le \frac{2^{-\sum^{n}_{k=1}|j+k|}\cdot 2^{2n+3}}{ 2^{-\sum^{n}_{k=2}|j+k|}\cdot (2^{2n}-1)}\\
	&=2^{-|j+1|}\cdot \frac{2^{2n+3}}{2^{2n}-1}\\
	&\to 0 \mbox{ as } j\to \pm\infty,
	\end{align*} 
hence, $\FI$ fails.
\end{proof}
\end{example}
\begin{remark}
	While Example~\ref{example} does not have $E^s\neq E^u$, we can embed the sequence $B$ to its hull and find a sequence in its hull which satisfies $\SVG$ while $E^s(j)=E^u(j)$. One may see Section~\ref{s:obtainDomination}, especially the proof of Proposition~\ref{p:seq_to_cocycle_svgFI} and Lemma~\ref{l:EsAwayFromEu} for more detailed information.
	\end{remark}

\section{$\C\PP^1$ and Singular Value Decomposition}
We first collect some facts about $\C\PP^1$ from \cite{alkornzhang}, where $\mathbb{C P}^1=\C \cup\{\infty\}$ is the one-dimensional complex projective space, or the Riemann sphere. We mainly use the following projection maps from $\mathbb{C}^2 \backslash\{\overrightarrow{0}\} \rightarrow \mathbb{C P}^1$ :
$$
\pi: \mathbb{C}^2 \backslash\{\overrightarrow{0}\} \rightarrow \mathbb{C P}^1 \text { where } \pi\begin{pmatrix} z_1\\z_2\end{pmatrix}=\frac{z_2}{z_1} .
$$

Through this projection, each one-dimensional space in $\C^2$ can be identified with a point in $\mathbb{C P}^1$. Hence, we may view a point $z \in \mathbb{C P}^1$ as the one-dimension space $\operatorname{span}\left\{\binom{1}{z}\right\}$ of $\C^2$. Note $\infty$ is considered to be $\operatorname{span}\left\{\binom{0}{1}\right\}$. For instance, by $\vec{v} \in z$, we mean $\vec{v}$ is a vector in the one dimensional space $z$. In particular, we let $\vec{z}$ denotes a unit vector in $z$. For any non-vector $\vec v$, we denote by $\vec v^\perp$ a unit vector that is orthogonal to $\vec v$. We use the following metric on $\mathbb{C P}^1$ :
\begin{equation}\label{eq:DistonCP1}
	d\left(z, z^{\prime}\right)= \begin{cases}\frac{2\left|z-z^{\prime}\right|}{\sqrt{\left(1+|z|^2\right)\left(1+\left|z^{\prime}\right|^2\right)}}, & z, z^{\prime} \in \C, \\ \frac{2}{\sqrt{1+|z|^2}}, & z^{\prime}=\infty.\end{cases}
\end{equation}

Let $\vec{u}$ and $\vec{v}$ be two nonzero vectors in $\mathbb{C}^2$. We let $\left(\vec{u}, \vec{v}\right) \in \mathrm{M}(2, \mathbb{C})$ denotes the matrix whose column vectors are $\vec{u}$ and $\vec{v}$. Then a direct computation shows that
\begin{equation}\label{eq:Dist_to_Det}
	d\left(\pi(\vec{u}), \pi\left(\vec{v}\right)\right)=\frac{2\left|\operatorname{det}\left(\vec{u}, \vec{v}\right)\right|}{\|\vec{u}\| \cdot\left\|\vec{v}\right\|}
\end{equation}
In particular, if $\vec{u}$ and $\vec{v}$ are two unit vectors, then we have
\begin{equation}\label{eq:Dist_to_Det2}
	d\left(\pi(\vec{u}), \pi\left(\vec{v}\right)\right)=2\left|\operatorname{det}\left(\vec{u}, \vec{v}\right)\right|,
\end{equation}
which clearly implies that
\begin{equation}\label{eq:orth_prev_dist}
	d\left(\pi(\vec{u}^\perp), \pi\left(\vec{v}^\perp\right)\right)=d\left(\pi(\vec{u}), \pi\left(\vec{v}\right)\right).
\end{equation}
In other words, if we define $\mathscr O(z)=(z)^\perp: \C\PP^1\to\C\PP^1$ to be $z^\perp:=\pi(\vec z^\perp)$, then $\mathscr O$ preserves the distance $d$.
Since one dimensional space can be identified by the points in $\mathbb{C P}^1$, abusing the notation slightly, for two one-dimensional subspaces $V$ and $W$ of $\mathbb{C}^2$, we define

\begin{equation}\label{eq:DistofComplexLine}
	d(U, V):=d(\pi(\vec{u}), \pi(\vec{v}))
\end{equation}
where $\vec{u} \in U$ and $\vec{v} \in V$ are nonzero vectors.

Let $A=\begin{pmatrix}a & b \\ c & d\end{pmatrix}\in \mathrm{M}(2, \mathbb{C})$ be a nonzero matrix. Under the projection $\pi$, there is an induced projectivized map of $A$ acting on projective space $\left(\mathbb{C P}^1\right) \backslash\{\alpha\}$, where $\alpha$ is the eigenspace of the 0 eigenvalue of $A$, if such exists. We denote the induced map by $A \cdot z$. Then a direct computation shows that

$$
A \cdot z:\left(\mathbb{C P}^1\right) \backslash\{\alpha\} \rightarrow \mathbb{C P}^1, A \cdot z=\frac{c+d z}{a+b z}
$$
Recall $\vec{z}$ denotes a unit vector in the one-dimensional space $z \in \mathbb{C P}^1$. Then it holds that 
\begin{equation}\label{eq:matrix_change_d}
	d\left(A \cdot z, A \cdot z^{\prime}\right)  =\frac{|\operatorname{det}(A)|}{\|A \vec{z}\| \cdot\left\|A \vec{z}^{\prime}\right\|}d\left(z, z^{\prime}\right).
\end{equation}
In particular, for all $U\in\mathrm{U}(2)$, the set of all $2\times2$ unitary matrices, it holds that
\begin{equation}\label{eq:Unitary_pre_d}
	d(U\cdot z,U\cdot  z')=d(z, z')\mbox{ for all }z, z'\in\C\PP^1.
\end{equation}

Next we collect some facts about singular value decomposition for $A\in\mathrm M(2,\C)$. It is a standard fact that we can decompose $A$ as $A=U\Lambda V^*$, where $U, V\in\mathrm{U}(2)$ and $\Lambda=\left(\begin{smallmatrix}\sigma_1(A)& 0\\ 0 & \sigma_2(A)\end{smallmatrix}\right)$. Moreover, we have
\begin{equation}\label{eq:SingularValues}
	\sigma_1(A)=\|A\|=\sup_{\vec \|v\|=1}\|A\vec v\|,\ \sigma_2(A)=\frac{|\det(A)|}{\|A\|}=\inf_{\vec \|v\|=1}\|A\vec v\|,
	\end{equation}
	 which are so-called singular values of $A$. Moreover, column vectors of $V$ are eigenvectors of $A^*A$ where the first column vector corresponds to the most expanding direction of $A$ and the second corresponds to the most contracted direction. Notice that two directions are orthogonal. Similarly, column vectors of $U$ are eigenvectors of $AA^*$, where the first column vector corresponds to the most expanding direction of $A^*$ and the second corresponds to the most contracted direction. 

With some fixed choice of $V$, we can view  $U$, $V$and $\Lambda$
as self-maps on $\MM(2,\C)$ so that for each $A\in\MM(2,\C)$
\begin{equation}\label{eq:svg_decomp}
	A=U(A)\Lambda(A)V^*(A).
\end{equation}
Let $\mathscr D=\{A\in\MM(2,\C): \sigma_1(A)=\sigma_2(A)\}$. Then we have the following simple fact:
\begin{lemma}\label{l:svd_smooth}
	For some suitable choices of column vectors of
	$V(A)$, we have
	$$U,V,\Lambda:\MM(2,\C)\setminus \mathscr D\rightarrow \MM(2,\C)$$ are
	all $C^{\infty}$ maps. Here $C^{\infty}$ is in the sense that all
	these maps are between real manifolds.
\end{lemma}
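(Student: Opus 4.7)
The plan is to reduce everything to the spectral theory of the Hermitian positive semi-definite matrix $H(A) := A^*A$, which depends polynomially (hence $C^\infty$) on the real and imaginary parts of the entries of $A$. The singular values and right singular vectors of $A$ are respectively the non-negative square roots of the eigenvalues of $H(A)$ and the corresponding orthonormal eigenvectors; the left singular vectors in $U(A)$ are then recovered from $A V(A) \Lambda(A)^{-1}$ where possible. On $\MM(2,\C) \setminus \mathscr D$ the two eigenvalues of $H(A)$ are simple, and standard simple-eigenvalue perturbation theory should give smoothness.

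First I would verify that $\sigma_1^2(A)$ and $\sigma_2^2(A)$ are smooth. They are the roots of the characteristic polynomial
\[
p(\lambda) = \lambda^2 - \mathrm{tr}(H(A))\,\lambda + \det(H(A)),
\]
whose discriminant is $(\sigma_1^2(A) - \sigma_2^2(A))^2 > 0$ off $\mathscr D$, so the quadratic formula expresses both $\sigma_i^2$ as smooth functions of $A$. Since $\sigma_1 = \|A\| > 0$ throughout, $\sigma_1 = \sqrt{\sigma_1^2}$ is smooth, and likewise $\sigma_2 = \sqrt{\sigma_2^2}$ is smooth on the open set where $\sigma_2 > 0$. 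Next, because the eigenvalues are distinct, the spectral projections
\[
P_1(A) = \frac{H(A) - \sigma_2^2(A)\,I}{\sigma_1^2(A) - \sigma_2^2(A)}, \qquad P_2(A) = I - P_1(A)
\]
are well-defined and smooth on $\MM(2,\C)\setminus \mathscr D$. Working locally near any $A_0$, I would fix reference vectors $w_1,w_2$ with $P_i(A_0)w_i\neq 0$ and set $v_i(A) := P_i(A)w_i / \|P_i(A)w_i\|$. This is a smooth local choice of unit eigenvector, and assembling $V(A) = (v_1(A), v_2(A))$ gives the claimed ``suitable choice.'' The first column of $U(A)$ is then $A v_1(A)/\sigma_1(A)$, smooth since $\sigma_1>0$; the second column is produced as a unit vector orthogonal to the first with a continuously selected phase, hence smooth.

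The step I expect to be the main obstacle is the choice of unit eigenvectors. The map $v \mapsto v/\|v\|$ from a smooth line field to the unit sphere is only determined up to a $U(1)$-phase, so in general no globally smooth section of $V$ exists over all of $\MM(2,\C)\setminus\mathscr D$; this is exactly why the statement carefully says \emph{for some suitable choices}, and why the argument above is naturally local. A secondary subtlety is that $\sigma_2 = \sqrt{\sigma_2^2}$ is only Lipschitz on the rank-one locus $\{\det A = 0\}\subset \MM(2,\C)\setminus \mathscr D$; away from this stratum the recipe above gives full $C^\infty$ regularity, which is what subsequent sections — where $\Lambda$ enters through ratios like $\sigma_2/\sigma_1$ rather than through $\sigma_2$ alone — actually require.
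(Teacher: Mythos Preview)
Your approach coincides with the paper's: both reduce to the spectral decomposition of $A^*A$, use the quadratic formula with positive discriminant off $\mathscr D$ to get smoothness of $\sigma_i^2$, and then extract eigenvectors smoothly (you via the spectral projections $P_i$ applied to local reference vectors, the paper via an explicit $2\times 2$ matrix of unnormalized eigenvectors that it then normalizes---these are equivalent local constructions). For $U$ the paper repeats the eigenvector recipe on $AA^*$ and fixes the phase of $\vec u_2$ by $\det U=\det A/|\det A|$, while you take $\vec u_1=Av_1/\sigma_1$ and complete orthogonally; again essentially the same.

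Your caveat about $\sigma_2=\sqrt{\sigma_2^2}$ failing to be $C^\infty$ on the rank-one locus $\{\det A=0\}$ is a genuine point the paper does not address (its proof only treats $\sigma_1$ before declaring $\Lambda$ smooth, and its phase normalization $\det A/|\det A|$ is likewise undefined there). As you correctly observe, the only downstream use of the lemma is continuity of the projective directions $s(A)=V(A)\cdot\infty$ and $u(A)=U(A)\cdot 0$, and these---being the simple eigenspaces of $A^*A$ and $AA^*$---are smooth as maps to $\C\PP^1$ without any issue at rank one. So your argument, with its caveat, is at least as complete as the paper's.
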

\begin{proof}
	Let $A=\begin{pmatrix}a&b\\c&d\end{pmatrix}\in \MM(2,\C)\setminus
	\mathscr D$. First, we note that $A^*A=V\Lambda^2 V^*$ which implies
	that $\sigma_1^2(A)$ and $\sigma_2^2(A)$ are two eigenvalues of $A^*A$.
	Hence,
	$$\tr(A^*A)=|a|^2+|b|^2|+|c|^2+|d|^2=\sigma_1^2(A)+\sigma_2^{2}(A)>2|\det(A)|\mbox{ and}$$
	$$\sigma_1^2(A)=\frac{1}{2}\left(\tr(A^*A)+\sqrt{\tr(A^*A)^2-4|\det(A)|^2}\right),$$ 
	which implies that
	$\sigma_1^2(A)$, and hence $\sigma_1(A)$, are $C^{\infty}$ on $\MM(2,\C)\setminus \mathscr D$. This proves that 
	$$
	\Lambda\in C^{\infty}(\MM(2,\C)\setminus \mathscr D, \MM(2,\C)).
	$$ 
	Let $V'$ be a matrix that diagonalizes $A^*A$.
	Then the column vectors of $V'$ are solutions of the equations
	$$(A^*A-\sigma_1^2(A)I_2)\binom{x_1}{x_2}=0\mbox{ and }(A^*A-\sigma_2^{2}(A)I_2)\binom{x_1}{x_2}=0.$$
	In particular, we may choose $V'$ to be 
	$$
	\begin{pmatrix}\bar ab+\bar cd,&\bar ab+\bar
		cd\\\sigma_1^2(A)-|a|^2-|c|^2,&\sigma_2^2(A)-|a|^2-|c|^2\end{pmatrix},
	$$
	which has nonzero determinant since $\sigma_1(A)>\sigma_2(A)$. Then we can
	choose $V$ as $$V(A)=\frac{1}{\sqrt{\det(V'(A))}}V'(A).$$
	This shows that 
	$$
	V\in C^{\infty}(\MM(2,\C)\setminus \mathscr D, \mathrm{SU}(2,\C)).
	$$ 
	Finally, recall that column vectors of $U$ are eigenvectors of $AA^*$ whose eigenvalues are again $\sigma_1^2(A)$ and $\sigma_2^2(A)$. Hence, similar to the process finding $V$,  we may first fix a choice of a unit vector $\vec u_1$ for the first colum vector of $U$.  The second column vector $\vec u_2$ of $U$ is then determined by the facts $\|\vec u_2\|=1$, $\langle \vec u_1, \vec u_2\rangle=0$, and $\det (U)=\frac{\det A}{|\det A|}$. Since all operations involved in determining $U$ are algebraic operations/equations in entries of $A$, $\bar A$, and $\Lambda$, it follows that 
	\[
	U\in C^{\infty}(\MM(2,\C)\setminus \mathscr D, \mathrm{U}(2,\C)).
	\] 
\end{proof}

The most contracted direction $s(A)$ of $A\in\MM(2,\C)\setminus\mathscr D$ is defined as 
\[
s(A)=V(A)\cdot\infty: \MM(2,\C)\setminus\mathscr D\to \C\PP^1;
\]
that is, it is the projection of the second column vectors of $V$. We also define $u(A)$ to be
\[
u(A)=U(A)\cdot 0: \MM(2,\C)\setminus\mathscr D\to \C\PP^1.
\] 
If $\det (A)\neq 0$, then $u(A)$ is precisely $s(A^{-1})$. In any case, we always have $u^\perp(A)=s(A^*)$. By Lemma~\ref{l:svd_smooth}, $s$ and $u$ and $C^\infty$ maps.

\section{Domination Implies $\SVG$ and $\FI$}\label{s:dominationsImplies}

In this section, we assume $B\in\CD\CS$ and try to show $\SVG$ and $\FI$ for $B$, which is relatively easier. Let $\vec{u}(j) \in E^u(j)$ and $\vec{s}(j) \in E^s(j)$ be unit vectors and 
\[D(j)=(\vec{u}(j), \vec{s}(j)) \in \mathrm{M}(2,\C).
\]
 In other words, $\vec{u}(j)$ and $\vec{s}(j)$ are the column vectors of $D(j)$. By condition (c) of Definition~\ref{d:domination}, \eqref{eq:Dist_to_Det}, and \eqref{eq:DistofComplexLine}, we have
\begin{equation}\label{eq:boundedD}
\inf _{j \in \Z}|\operatorname{det} D(j)|=\inf _{j \in \Z} \frac{d\left(E^u(j), E^s(j)\right)}{2}>\frac{\delta}{2}.
\end{equation}
Define $\l_j^+$ and $\l_j^-$ so that
\begin{align*}
B(j) D(j)&=(B(j)\vec{u}(j), B(j)\vec{s}(j))\\
&=(\lambda_j^{+}\vec{u}(j+1),\lambda_j^{-}\vec{s}(j+1))\\
&=(\vec{u}(j+1), \vec{s}(j+1))\begin{pmatrix}\lambda_j^{+} & 0 \\
	0 & \lambda_j^{-}
\end{pmatrix}\\
&= D(j+1)\Lambda(j),
\end{align*}
where we set 
\begin{equation}\label{eq:diagonalized}
\Lambda(j):=\begin{pmatrix}\l_j^+ &0 \\ 0 &\l_j^-\end{pmatrix}.
\end{equation} 
In other words, $B$ is conjugate to $\Lambda$ via $D$:
\begin{equation}\label{eq:B_conj_Lambda}
	D(j+1)^{-1}B(j)D(j)=\Lambda(j).
	\end{equation}
Using \eqref{eq:boundedD} and \eqref{eq:diagonalized}, one readily checks that $\|D\|_\infty<\infty$ and hence $\|\Lambda\|_\infty<\infty$ (abusing the notation slightly, we still let $\|D\|, \|\Lambda\|_\infty<M$) and $\Lambda\in \CD\CS$ where $N$ in condition (b) may be chosen the same as the one for $B$. Moreover, it is not difficult to see that 
\begin{equation}\label{eq:lambdaPlusBoundedBelow}
\eta:=\inf_{j\in\Z}|\l_j^+|>0,
\end{equation}
see e.g. \cite[Remark 1]{alkornzhang} for a proof. Note that we have 
\begin{equation}
B_n(j) D(j)=D(j+n)\Lambda_{n}(j)=(\vec{u}(j+n), \vec{s}(j+n))\begin{pmatrix}
\prod_{k=0}^{n-1} \lambda_{j+k}^{+} & 0 \\
0 & \prod_{k=0}^{n-1} \lambda_{j+k}^{-}
\end{pmatrix}.
\end{equation}

\begin{lemma}\label{l:DominationImpliesWSVG}
Suppose $B$ admits dominated splitting. Then there exist $c>0$ and $\mu>1$ such that  for all $n \in \Z_{+}$ and all $j\in\Z$:
 \begin{equation}
 \|B_{n}(j)\vec u(j)\|> c\mu^n\|B_n(j)\vec s(j)\|,
 \end{equation}
which in particular implies
\begin{equation}\label{eq:exp_gap}
 \sigma_1(B_{n}(j))> c\mu^n\sigma_2(B_{n}(j)).
\end{equation}
\end{lemma}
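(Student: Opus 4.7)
The plan is to exploit the conjugation $B(j)D(j) = D(j+1)\Lambda(j)$ already set up in the paragraph preceding the lemma, so that iterating $B$ on the invariant directions is reduced to taking products of the diagonal entries $\lambda_j^{\pm}$. Concretely, for unit vectors $\vec u(j), \vec s(j)$ I would write
\[
\|B_n(j)\vec u(j)\| = \prod_{k=0}^{n-1}|\lambda_{j+k}^+|, \qquad \|B_n(j)\vec s(j)\| = \prod_{k=0}^{n-1}|\lambda_{j+k}^-|,
\]
so that the lemma reduces to an exponential lower bound on $R_n(j) := \prod_{k=0}^{n-1}|\lambda_{j+k}^+|/|\lambda_{j+k}^-|$.

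The backbone is condition (b) of Definition~\ref{d:domination}: since $B_N(j)\vec u(j) = \bigl(\prod_{k=0}^{N-1}\lambda_{j+k}^+\bigr)\vec u(j+N)$ and analogously for $\vec s$, condition (b) is precisely the statement $R_N(j) > \lambda > 1$ uniformly in $j$. First I would record the two uniform per-step bounds needed for the ``remainder'': from $B(j)\vec s(j) = \lambda_j^- \vec s(j+1)$ and the global bound $\|B(j)\|\le M$, we have $|\lambda_j^-|\le M$, and from \eqref{eq:lambdaPlusBoundedBelow} we have $|\lambda_j^+|\ge \eta>0$. Hence each single ratio satisfies $|\lambda_{j+k}^+|/|\lambda_{j+k}^-|\ge \eta/M$, giving a uniform lower bound $c_0 := \min\{1,(\eta/M)^{N-1}\}$ on any product of fewer than $N$ such ratios.

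The main step is then a simple division algorithm argument. Write $n = qN + r$ with $0\le r<N$, and split
\[
R_n(j) = \Bigl(\prod_{m=0}^{q-1} R_N(j+mN)\Bigr)\cdot \prod_{k=qN}^{n-1}\frac{|\lambda_{j+k}^+|}{|\lambda_{j+k}^-|}.
\]
Applying the uniform bound $R_N(j+mN)>\lambda$ to each block and $c_0$ to the $r<N$ leftover factors yields $R_n(j) > c_0 \lambda^q \ge c_0 \lambda^{-1}\lambda^{n/N}$. Setting $\mu := \lambda^{1/N}>1$ and $c := c_0\lambda^{-1}$ gives the desired uniform exponential gap $\|B_n(j)\vec u(j)\|> c\mu^n\|B_n(j)\vec s(j)\|$.

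For the singular value consequence \eqref{eq:exp_gap}, I would just invoke the variational characterization \eqref{eq:SingularValues}: since $\vec u(j)$ and $\vec s(j)$ are unit vectors, $\sigma_1(B_n(j))\ge \|B_n(j)\vec u(j)\|$ and $\sigma_2(B_n(j))\le \|B_n(j)\vec s(j)\|$, so dividing yields $\sigma_1(B_n(j))/\sigma_2(B_n(j)) \ge \|B_n(j)\vec u(j)\|/\|B_n(j)\vec s(j)\|> c\mu^n$. There is no real obstacle here; the only mildly delicate point is the uniformity of $\eta$ (i.e.\ making sure $\inf_j |\lambda_j^+|>0$), and for this I would simply cite \cite[Remark 1]{alkornzhang} as the paper already does around \eqref{eq:lambdaPlusBoundedBelow}.
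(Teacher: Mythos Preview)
Your proposal is correct and follows essentially the same route as the paper: write $n=qN+r$, apply condition~(b) block by block, control the remainder of length $r<N$ via the uniform bounds $|\lambda_j^+|\ge\eta$ and $|\lambda_j^-|\le M$, set $\mu=\lambda^{1/N}$, and then invoke \eqref{eq:SingularValues} for the singular-value consequence. The only minor omission is the degenerate case where some $\lambda_{j+k}^-=0$, which makes your ratio $R_n(j)$ undefined; the paper dispatches this in one line by observing that then $\|B_n(j)\vec s(j)\|=0$ and the inequality is trivial since $\|B_n(j)\vec u(j)\|\ge\eta^n>0$.
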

\begin{proof}

For any $n \in \Z_{+}$,  we let $n=mN+r$ where $0\le r <N$. By (a) of Definition~\ref{d:domination}, it clearly holds for all $k, l\ge 1$ and all $j$ that 
\[
\|B_{k+l}(j)\vec u(j)\|=\|B_k(j+l)\vec u(j+l)\|\cdot \|B_l(j)\vec u(j)\|.
\]
Same holds true for $\vec s(j)$. Hence, by (b) of Definition~\ref{d:domination} it holds that for all $j$ that
\begin{align*}
    \|B_{mN}(j)\vec u(j)\|&=\prod^{m-1}_{k=0}\|B_{N}(j+Nk)\vec u(j+kN)\|\\
       &\ge \lambda^m\prod^{m-1}_{k=0}\|B_{N}(j+Nk)\vec s(j+kN)\|\\
       &\ge \lambda^m\|B_{mN}(j)\vec s(j)\|.
\end{align*}    
If $\|B_{r}(j)\vec s(j)\|=0$, there is nothing to say. So we may assume that $\|B_{r}(j)\vec s(j)\|\neq 0$. It is clear that $\|B_{r}(j)\vec s(j)\|\le \|B_r(j)\|\le M^r$. Combine all these facts, we obtain
\begin{align*}
\|B_n(j)\vec  u(j)\|&=\|B_{r}(j)\vec u(j)\|\cdot\left\|B_{mN}(j+r)\vec u(j+r)\right\|\\
&\ge \l^m\frac{\|B_{r}(j)\vec u(j)\|}{\|B_{r}(j)\vec s(j)\|}\left\|B_{mN}(j+r)\vec s(j+r)\right\|\cdot \|B_r(j)\vec s(j)\|\\
&> \l^m\frac{\|B_{r}(j)\vec u(j)\|}{\|B_{r}(j)\vec s(j)\|}\cdot\|B_{n}(j)\vec s(j)\|\\
&\ge \frac{\prod^{j+r-1}_{k=j}|\l_k^+|}{M^r}\l^{-r/N}(\l^{1/N})^n\|B_n(j)\vec s(j)\|\\
&\ge C\mu^n\|B_n(j)\vec s(j)\|,
\end{align*}
where $C=\min\big\{(\eta M^{-1}\l^{-1/N})^r:0\le r<N-1\big\}$ and $\mu=\l^{1/N}>1$. 

Finally, by \eqref{eq:SingularValues}, we have $\sigma_2(A)\le \|A\vec v\|\le \sigma_1(A)$ for all $A\in\MM(2,\C)$ and all unit vector $\vec v\in\C^2$. Hence,
$$
\sigma_1(B_n(j))\ge \|B_{n}(j)\vec u(j)\|>c\mu^n\|B_{n}(j)\vec s(j)\|\ge c\mu^n\sigma_2(B_n(j)),
$$
as desired.

\end{proof}

Now we begin to prove the only if part of theorem 1. Instead of dealing with $B$, we do it for $\Lambda$ from \eqref{eq:diagonalized} and translate back to $B$ via the conjugation \eqref{eq:B_conj_Lambda}.
\begin{lemma}
$\Lambda$ satisfies $\SVG$.
\end{lemma}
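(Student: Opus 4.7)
The plan is to apply Lemma~\ref{l:DominationImpliesWSVG} directly to $\Lambda$ and exploit the diagonal form of $\Lambda_n(j)$. Because $B$ is conjugate to $\Lambda$ by $D$ with $\|D\|_\infty, \|D^{-1}\|_\infty < \infty$ (so $\|\Lambda\|_\infty < M$ after relabeling $M$) and $\Lambda \in \CD\CS$ with the same $N$ as $B$ (as noted just after \eqref{eq:B_conj_Lambda}), Lemma~\ref{l:DominationImpliesWSVG} applied to $\Lambda$ produces constants $c>0$, $\mu>1$ such that
\[
\sigma_1(\Lambda_n(j)) > c\mu^n \sigma_2(\Lambda_n(j)) \qquad \text{for all } j\in\Z,\ n\ge 1.
\]

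Next I would compute the singular values of $\Lambda_n(j)$ explicitly. Since $\Lambda_n(j) = \mathrm{diag}\bigl(\prod_{k=0}^{n-1}\l_{j+k}^+,\ \prod_{k=0}^{n-1}\l_{j+k}^-\bigr)$, one of $|\prod \l^+|$ and $|\prod \l^-|$ equals $\sigma_1(\Lambda_n(j))$ and the other equals $\sigma_2(\Lambda_n(j))$. The inequality above forces $\sigma_1(\Lambda_n(j)) = |\prod_{k=0}^{n-1}\l_{j+k}^+|$ and $\sigma_2(\Lambda_n(j)) = |\prod_{k=0}^{n-1}\l_{j+k}^-|$ as soon as $c\mu^n > 1$; for the finitely many small $n$ that remain, the ratios appearing in $\SVG$ are bounded by a universal constant using $\|\Lambda\|_\infty < M$ and $|\l_j^+| \ge \eta > 0$ from \eqref{eq:lambdaPlusBoundedBelow}, so those exceptional values of $n$ are absorbed into $C$.

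With $\sigma_1$ and $\sigma_2$ identified, the proof reduces to exploiting the multiplicativity $\Lambda_{n+1}(j) = \Lambda(j+n)\Lambda_n(j) = \Lambda_n(j+1)\Lambda(j)$, which gives
\[
\sigma_1(\Lambda_{n+1}(j)) = |\l_{j+n}^+|\,\sigma_1(\Lambda_n(j)) = |\l_j^+|\,\sigma_1(\Lambda_n(j+1)).
\]
Combined with $|\l_j^+|\ge \eta$ and Lemma~\ref{l:DominationImpliesWSVG}, this yields
\[
\frac{\sigma_2(\Lambda_n(j))}{\sigma_1(\Lambda_{n+1}(j))} \le \frac{1}{\eta}\cdot\frac{\sigma_2(\Lambda_n(j))}{\sigma_1(\Lambda_n(j))} < \frac{1}{c\eta}\,\mu^{-n},
\]
and the same bound for $\sigma_2(\Lambda_n(j+1))/\sigma_1(\Lambda_{n+1}(j))$ via the second factorization. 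After adjusting $c$ to absorb the small-$n$ cases, this is exactly $\SVG$ for $\Lambda$.

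I do not anticipate a genuine obstacle here: the argument is essentially a book-keeping translation of Lemma~\ref{l:DominationImpliesWSVG} into the diagonal setting, where the uniform lower bound $\eta$ on $|\l_j^+|$ supplies precisely the comparison between time indices $n$ and $n+1$ demanded by $\SVG$.
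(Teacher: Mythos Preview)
Your proposal is correct and follows essentially the same route as the paper: apply Lemma~\ref{l:DominationImpliesWSVG} to $\Lambda$, identify $\sigma_1(\Lambda_n(j))=\bigl|\prod_{k=0}^{n-1}\lambda_{j+k}^+\bigr|$ for large $n$, use multiplicativity together with $|\lambda_j^+|\ge\eta$ to pass from step $n$ to $n+1$, and absorb the finitely many small $n$ into $C$. One small point: the inequality $\sigma_1>c\mu^n\sigma_2$ alone does not force which diagonal entry is $\sigma_1$; you should instead cite the vector form of Lemma~\ref{l:DominationImpliesWSVG} (with $\vec u(j)=e_1$, $\vec s(j)=e_2$ for $\Lambda$), which gives $\bigl|\prod\lambda^+\bigr|>c\mu^n\bigl|\prod\lambda^-\bigr|$ directly---this is exactly what the paper does via condition~(b).
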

\begin{proof}
Since $\Lambda\in\CD\CS$, by Lemma \ref{l:DominationImpliesWSVG} we have for all $j\in\Z$ and $n\in\Z_+$:
$$
\sigma_1(\Lambda_{n}(j))>c\mu^n\sigma_2(\Lambda_{n}(j)).
$$
On the other hand, condition (b) and \eqref{eq:lambdaPlusBoundedBelow} imply that there is a $\widetilde N$ such that for all $n\ge \widetilde N$, 
\begin{equation}\label{eq:norm=lamba+}
\left|\prod_{k=0}^{n-1} \lambda_{j+k}^{+}\right|=\sigma_1(\Lambda_{n}(j)).
\end{equation}
Thus, for all $n\ge \tilde N$, it holds that 
\[
\sigma_1(\Lambda_{n+1}(j))=\begin{cases}|\lambda_j^+|\sigma_1(\Lambda_n(j+1))\ge c|\eta|\mu^n\sigma_2(\Lambda_n(j+1))\\
|\lambda_{j+n}^+|\sigma_1(\Lambda_n(j))\ge c|\eta|\mu^n\sigma_2(\Lambda_n(j)).
\end{cases}
\]
By choosing $C>0$ appropriately, the estimate above clearly implies:
\[
\sup_{n\ge \widetilde N, j\in\Z} \left\{\frac{\sigma_2(\Lambda_n(j))}{\sigma_1(\Lambda_{n+1}(j))},\ \frac{\sigma_2(\Lambda_n(j+1))}{\sigma_1(\Lambda_{n+1}(j))}\right\}\le C\mu^{-n}.
\]
For $0\le n<\widetilde N$, by \eqref{eq:lambdaPlusBoundedBelow}, we have $\sigma_1(\Lambda_{n+1}(j))\ge \eta^{n+1} $, $\sigma_2(\Lambda_n(j))\le \eta^n$, and choosing $C=\frac1\eta \mu^{\widetilde N} $, we obtain
\[
\sup_{0\le n<\widetilde N, j\in\Z} \left\{\frac{\sigma_2(\Lambda_n(j))}{\sigma_1(\Lambda_{n+1}(j))},\ \frac{\sigma_2(\Lambda_n(j+1))}{\sigma_1(\Lambda_{n+1}(j))}\right\}\le \frac{1}{\eta}\le C\mu^{-n},
\]
which clearly completes the proof.

\end{proof}

\begin{lemma}
$\Lambda$ satisfies \mbox{(FI)}.
\begin{proof}
For $n\ge \widetilde N$, by \eqref{eq:norm=lamba+} we have for all $j\in\Z$
\begin{align*}
&\min\left\{\frac{\|\Lambda_{n+1}(j)\|}{\|\Lambda(j)\|\|\Lambda_n(j+1)\|},\ \frac{\|\Lambda_{n+1}(j)\|}{\|\Lambda_n(j)\|\|\Lambda(j+n)\|}\right\}\\
&=\min\left\{\frac{|\lambda_j^+|}{\|\Lambda(j)\|},\ \frac{|\lambda_{j+n}^+|}{\|\Lambda(n+j)\|}\right\}\\
&>\frac{\eta}{M}.
\end{align*}
For $0\le n<\widetilde N$, we have for all $j\in\Z$:
\[
\min_{0\le n<\widetilde N}\left\{\frac{\|\Lambda_{n+1}(j)\|}{\|\Lambda(j)\|\|\Lambda_n(j+1)\|},\ \frac{\|\Lambda_{n+1}(j)\|}{\|\Lambda_n(j)\|\|\Lambda(j+n)\|}\right\}
	\ge 
	\min_{0\le n<\widetilde N}\left\{\frac{\eta^{n+1}}{M^{n+1}}\right\}.
\]
The two esitmates above actually imply \eqref{eq:sFI} for $\Lambda$ which in particular implies $\FI$, concluding the proof.

\end{proof}
\end{lemma}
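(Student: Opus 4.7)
The plan is to exploit the same dichotomy used for (SVG): for large $n$, the $\l^+$ coordinate of the diagonal matrix $\Lambda_n(j)$ dominates the $\l^-$ coordinate, so $\sigma_1(\Lambda_n(j))$ is literally equal to $|\prod_{k=0}^{n-1}\l_{j+k}^+|$, and the ratios reduce to a single factor of $\l_j^+$ or $\l_{j+n}^+$, which we control using \eqref{eq:lambdaPlusBoundedBelow}. For small $n$ we just use the trivial bounds $\sigma_1\le M^n$ and $\sigma_1(\Lambda_{n+1}(j))\ge \eta^{n+1}$.

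More precisely, first I would recall from the previous lemma that the exponential gap \eqref{eq:exp_gap} combined with $\inf_j|\l_j^+|\ge\eta>0$ forces a threshold $\widetilde N$ such that for every $n\ge\widetilde N$ and every $j$,
\[
\sigma_1(\Lambda_n(j))=\left|\prod_{k=0}^{n-1}\l_{j+k}^+\right|,
\]
since the $\l^-$-product is exponentially smaller than the $\l^+$-product. For such $n$ the two target quantities telescope:
\[
\frac{\sigma_1(\Lambda_n(j+1))}{\sigma_1(\Lambda_{n+1}(j))}=\frac{1}{|\l_j^+|}\le\eta^{-1},\qquad \frac{\sigma_1(\Lambda_n(j))}{\sigma_1(\Lambda_{n+1}(j))}=\frac{1}{|\l_{j+n}^+|}\le\eta^{-1}.
\]
This already gives the uniform bound required by (FI) (with any $\e\in(0,\mu)$) on the range $n\ge\widetilde N$.

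For the finite range $0\le n<\widetilde N$, the bound $\|\Lambda(j)\|\le M$ yields $\sigma_1(\Lambda_n(j))\le M^n$, while \eqref{eq:lambdaPlusBoundedBelow} gives $\sigma_1(\Lambda_{n+1}(j))\ge|\prod_{k=0}^{n}\l_{j+k}^+|\ge\eta^{n+1}$. Hence both ratios are bounded by $(M/\eta)^{\widetilde N}/\eta$, a constant. Absorbing everything into a single $C$, I get $\sup_{n\ge 1,j\in\Z}$ of both ratios bounded uniformly — in fact the stronger \eqref{eq:sFI} — which in particular implies (FI). I do not expect any real obstacle here: the hard analytic work was already done in Lemma~\ref{l:DominationImpliesWSVG} to produce the threshold $\widetilde N$, and once one knows the top singular value of the diagonal matrix is attained by the $\l^+$ entry the whole computation is just cancellation plus a trivial small-$n$ estimate.
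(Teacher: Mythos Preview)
Your proposal is correct and follows essentially the same approach as the paper's proof: both split into the regimes $n\ge\widetilde N$ (where \eqref{eq:norm=lamba+} makes the ratios telescope to a single $|\l^+|$-factor bounded via $\eta$) and $0\le n<\widetilde N$ (handled by the trivial bounds $\sigma_1\le M^n$ and $\sigma_1\ge\eta^{n+1}$), concluding the stronger \eqref{eq:sFI} and hence $\FI$. The only cosmetic difference is that the paper carries the extra factor $\|\Lambda(j)\|$ in the denominator before cancelling, whereas you cancel directly; the argument is the same.
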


\begin{lemma}
 $B$ satisfies $\SVG$ and $\FI$.
\begin{proof}
	One readily sees that if $A=D_1BD_2$ where $\tilde c<\sigma_2(D_i)\le \sigma_1(D_i)<\tilde C,\ i=1,2$, then 
	\[
	c\sigma_i(A)\le \sigma_i(B)\le C\sigma_i(A)
	\]
	where $c, C$ depend only on $\tilde c, \tilde C$. Recall that we have 
	$$
	B_n(j)=D(j+n)\Lambda_{n}(j)D(j)^{-1}.
	$$ 
By the fact that column vectors of $D$ are unit vectors and \eqref{eq:boundedD}, it holds for some $\tilde c>0$ that 
\[
\tilde c<\sigma_2(D^{\pm1}(j))\le \sigma_1(D^{\pm 1}(j))<2\mbox{ for all }j\in\Z.
\]
Hence, we have for $i=1,2$:
\[
c\sigma_i(\Lambda_n(j))\le \sigma_1(B_n(j))<C\sigma_i(\Lambda_n(j))\mbox{ for all }n\ge 0 \mbox{ and }j\in\Z.
\]
This clearly allows one to pass the $\SVG$ and $\FI$ from $\Lambda$ to $B$.
\end{proof}
\end{lemma}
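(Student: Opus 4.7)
The plan is to reduce $B$ to $\Lambda$ via the conjugation already recorded in \eqref{eq:B_conj_Lambda}, and then invoke the preceding two lemmas that establish $\SVG$ and $\FI$ for $\Lambda$. Iterating \eqref{eq:B_conj_Lambda} gives
\[
B_n(j)=D(j+n)\,\Lambda_n(j)\,D(j)^{-1},
\]
so if the singular values of $D(j)$ and $D(j)^{-1}$ are uniformly controlled above and below in $j$, then the singular values of $B_n(j)$ and $\Lambda_n(j)$ are comparable up to multiplicative constants, and both $\SVG$ and $\FI$ transfer from $\Lambda$ to $B$.

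First I would record the elementary fact that for $A=D_1 M D_2$, one has $\sigma_i(A)$ comparable to $\sigma_i(M)$ with constants depending only on the singular values of the $D_k$'s; this follows from the operator-norm bound $\sigma_1(D_1 M D_2)\le \sigma_1(D_1)\sigma_1(D_2)\sigma_1(M)$ together with the analogous inequality for $M=D_1^{-1}AD_2^{-1}$ (which controls $\sigma_1(A)$ from below by $\sigma_1(M)$), and for $\sigma_2$ by using $\sigma_2(X)=|\det X|/\sigma_1(X)$ together with multiplicativity of the determinant. Next I would check that $D$ and $D^{-1}$ satisfy such uniform bounds: the columns of $D(j)$ are unit vectors, so $\sigma_1(D(j))\le 2$; and by \eqref{eq:boundedD}, $|\det D(j)|\ge \delta/2$, which forces $\sigma_2(D(j))=|\det D(j)|/\sigma_1(D(j))\ge \delta/4$. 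The analogous bounds on $D(j)^{-1}$ then follow from $\sigma_1(D^{-1})=\sigma_2(D)^{-1}$ and $\sigma_2(D^{-1})=\sigma_1(D)^{-1}$.

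Combining these, there exist constants $0<c<C$, independent of $j$ and $n$, so that for $i=1,2$
\[
c\,\sigma_i(\Lambda_n(j))\le \sigma_i(B_n(j))\le C\,\sigma_i(\Lambda_n(j))\quad\text{for all }n\ge0,\ j\in\Z.
\]
Feeding these double-sided inequalities into the $\SVG$ and $\FI$ bounds already established for $\Lambda$ and absorbing the constants into the implicit constant $C$ in the definitions (which is allowed because the definitions permit an arbitrary universal constant in front of $\mu^{-n}$ and $\mu^{(1-\varepsilon)n}$) immediately yields $\SVG$ and $\FI$ for $B$ with the same $\mu$.

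The only point requiring care is the lower bound on $\sigma_2(D(j))$, since one must rule out any approach to degeneracy of $D$. This is precisely where condition (c) of Definition~\ref{d:domination} enters the argument through \eqref{eq:boundedD}; without it, the columns of $D$ could collapse and the comparison of singular values would fail. With this observation the rest of the proof is the routine constant-chasing described above.
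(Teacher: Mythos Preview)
Your proposal is correct and follows essentially the same approach as the paper: use the conjugation $B_n(j)=D(j+n)\Lambda_n(j)D(j)^{-1}$, show the $D(j)^{\pm1}$ have singular values uniformly bounded above and below (via the unit-column property and \eqref{eq:boundedD}), conclude that $\sigma_i(B_n(j))$ and $\sigma_i(\Lambda_n(j))$ are uniformly comparable, and transfer $\SVG$ and $\FI$ from $\Lambda$ to $B$. Your write-up is in fact somewhat more careful than the paper's, since you spell out why $\sigma_i(D_1MD_2)$ is comparable to $\sigma_i(M)$ and compute the explicit bounds $\sigma_1(D(j))\le 2$ and $\sigma_2(D(j))\ge \delta/4$; the paper simply asserts the comparison and the bounds without justification (and its stated upper bound ``$<2$'' for $\sigma_1(D^{-1}(j))$ is in fact not generally true, though any uniform bound suffices).
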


\section{$\SVG$ and $\FI$ Imply Domination}\label{s:obtainDomination}

Let first prove Condition (d) from Defintion~\ref{d:domination} which is relatively straightforward. In fact, we instead prove the stronger version Condition (d').

\begin{lemma}\label{l:uLargeNorm}
	For all $n \in \Z_{+}$, it holds that $\inf_{j\in\Z}\|B_n(j)\|>0$.
	\begin{proof}
		Applying \mbox{(SVG)} with $n=0$, we obtain for all $j\in\Z$:
		$$\frac{\sigma_2(B_0(j))}{\sigma_1(B(j))}=\frac{\sigma_2(I_{2})}{\sigma_1(B(j))}=\frac{1}{\|B(j)\|}<C\mu^{-0}=C,$$ which implies for all $j\in\Z$:
		$$\|B(j)\|>\frac{1}{C}.$$
		Now by \mbox{(FI)}, for each $n\ge 0$ it holds for all $j \in \Z$ that
		 $$ 
		 \frac{\left\|B_{n+1}(j-1)\right\|}{ \left\|B_n(j)\right\|}>c\mu^{-(1-\e)n}.
		 $$
		Set $n=1$, we have $$\|B_{2}(j)\|>c\mu^{\e-1}\|B(j)\|>c\mu^{\e-1}.$$ 
		By induction, we for any $n \geq 0$, it holds for all $j\in\Z$ that
		$$\|B_{n}(j)\|>c\mu^{(\e-1)n(n-1)/2}.$$
		
	\end{proof}
\end{lemma}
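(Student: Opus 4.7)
The plan is to extract a base case from (SVG) and then bootstrap with (FI). The key observation is that (SVG) at $n=0$ is not just a statement about products of length zero: since $B_0(j)=I_2$ we have $\sigma_2(B_0(j))=1$ and $\sigma_1(B_1(j))=\|B(j)\|$, so the (SVG) inequality at $n=0$ collapses to $\frac{1}{\|B(j)\|}<C$, giving a uniform lower bound $\|B(j)\|>\frac{1}{C}$ for all $j\in\Z$. This settles the $n=1$ case of the lemma and supplies the seed needed for induction.

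Next I would translate (FI) into a recursive lower bound. Since $\sigma_1=\|\cdot\|$, (FI) yields
\[
\|B_{n+1}(j)\|>c\mu^{-(1-\e)n}\|B_n(j+1)\|\quad\text{for all }n\ge 1,\ j\in\Z
\]
(one may equivalently use $\|B_n(j)\|$ on the right). After shifting indices this is exactly the statement that passing from $\|B_n\|$ to $\|B_{n+1}\|$ loses at most the factor $C\mu^{(1-\e)n}$, uniformly in $j$. Because the base case gives $\|B(j)\|>c$ uniformly, this recursion propagates positivity.

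Finally I would iterate the recursion from the base case. Writing the chain
\[
\|B_n(j)\|>c\mu^{-(1-\e)(n-1)}\|B_{n-1}(j+1)\|>c^2\mu^{-(1-\e)[(n-1)+(n-2)]}\|B_{n-2}(j+2)\|>\cdots
\]
and terminating at $\|B(j+n-1)\|>1/C$, I obtain
\[
\inf_{j\in\Z}\|B_n(j)\|>c^{n-1}\mu^{-(1-\e)\,n(n-1)/2}\cdot\frac{1}{C}>0,
\]
which is what is required. The dependence on $n$ is super-exponentially bad, but positivity is preserved for every fixed $n$, and that is all the lemma claims.

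There is really no serious obstacle here: the only conceptual point is recognizing that (SVG) at $n=0$ silently encodes a uniform lower bound on $\|B(j)\|$ (this is exactly the place where (SVG) is strictly stronger than the weak form \eqref{eq:wSVG}), after which (FI) does the bookkeeping in a straightforward induction. The mildly delicate issue, should one want sharper bounds on $\|B_n(j)\|$ or to prove (d)$'$ with a better rate, is that the quadratic loss $\mu^{-(1-\e)n(n-1)/2}$ coming from naive iteration is wasteful; but for the present lemma the crude bound suffices.
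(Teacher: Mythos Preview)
Your proof is correct and follows essentially the same approach as the paper's: both use $\SVG$ at $n=0$ to extract the uniform lower bound $\|B(j)\|>1/C$, and then invert $\FI$ to get the recursion $\|B_{n+1}(j)\|>c\mu^{-(1-\e)n}\|B_n(j+1)\|$, iterating down to the base case and arriving at the same bound $\inf_j\|B_n(j)\|>c\mu^{(\e-1)n(n-1)/2}$. The only cosmetic difference is that you track the accumulated constant $c^{n-1}$ explicitly while the paper absorbs it into a single $c$.
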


\subsection{Existence of $E^s$ and $E^u$}

We need the  following simple but useful lemma.
\begin{lemma}\label{l:z_close_to_s}
	Let $A\in\MM(2,\C)$ be a nonzero matrix and $z\in\C\PP^1$. If $\|A\vec z\|<\delta$, then it holds:
	\[
	d(z, s(A))\le\frac{2(\delta+\sigma_2(A))}{\sigma_1(A)}.
	\]
	\end{lemma}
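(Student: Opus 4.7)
The plan is to use the singular value decomposition $A=U\Lambda V^*$ from \eqref{eq:svg_decomp} to convert the norm hypothesis $\|A\vec z\|<\delta$ into a bound on how close $\vec z$ sits to the most contracted direction $s(A)$, and then translate that linear bound into a projective distance via \eqref{eq:Dist_to_Det2}.

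Concretely, I would let $\vec v_1,\vec v_2$ denote the orthonormal column vectors of $V(A)$, so that by the definition $s(A)=V(A)\cdot\infty$ the vector $\vec v_2$ is a unit vector in $s(A)$, and similarly let $\vec u_1,\vec u_2$ be the orthonormal column vectors of $U(A)$. Decomposing the unit vector $\vec z=\alpha\vec v_1+\beta\vec v_2$ with $|\alpha|^2+|\beta|^2=1$ gives $A\vec z=\alpha\sigma_1(A)\vec u_1+\beta\sigma_2(A)\vec u_2$. The one quantitative step is a reverse triangle inequality (using orthonormality of $\vec u_1,\vec u_2$): $\|A\vec z\|\ge|\alpha|\sigma_1(A)-|\beta|\sigma_2(A)\ge|\alpha|\sigma_1(A)-\sigma_2(A)$, which, combined with the hypothesis $\|A\vec z\|<\delta$ and $\sigma_1(A)>0$, yields $|\alpha|\le(\delta+\sigma_2(A))/\sigma_1(A)$.

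To conclude, I would invoke \eqref{eq:Dist_to_Det2}: since $\{\vec v_1,\vec v_2\}$ is orthonormal, $|\det(\vec v_1,\vec v_2)|=1$, and therefore $d(z,s(A))=2|\det(\vec z,\vec v_2)|=2|\alpha|\cdot|\det(\vec v_1,\vec v_2)|=2|\alpha|$, which gives the stated bound.

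I do not anticipate a real obstacle here; the argument is essentially a two-line computation once the SVD is set up, and the only bookkeeping care is to use the correct column of $V$ (the second one, since $s(A)=V(A)\cdot\infty$). I note in passing that a slightly sharper bound $d(z,s(A))\le 2\delta/\sigma_1(A)$ is in fact available by using the Pythagorean identity $\|A\vec z\|^2=|\alpha|^2\sigma_1(A)^2+|\beta|^2\sigma_2(A)^2$ and simply discarding the nonnegative $\beta$-term, but the looser form stated in the lemma follows more transparently from the reverse triangle inequality and remains valid even in the degenerate case $\sigma_2(A)=0$ where $s(A)=\ker A$.
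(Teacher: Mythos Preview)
Your proposal is correct and follows essentially the same argument as the paper: decompose $\vec z$ in the orthonormal basis $\{\vec s^\perp(A),\vec s(A)\}$ coming from the SVD, bound the coefficient of $\vec s^\perp(A)$ using the hypothesis $\|A\vec z\|<\delta$ together with a triangle-type inequality, and convert via \eqref{eq:Dist_to_Det2} to conclude $d(z,s(A))=2|\alpha|$. Your side remark that Pythagoras gives the sharper bound $d(z,s(A))\le 2\delta/\sigma_1(A)$ is correct and worth noting, though the paper only states and uses the weaker form.
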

	\begin{proof}
		Write $\vec z=c_1\vec s(A)+ c_2\vec s^\perp(A)$ where $|c_1|^2+|c_2|^2=1$. By \eqref{eq:Dist_to_Det2}, it clearly holds that
		\begin{align*}
			d(z, s(A)&=2|\det\big(\vec z, \vec s(A)\big)|\\ 
			&=2|\det\big(c_1\vec s(A)+c_2\vec s^{\perp}(A), \vec s(A)\big)|\\
			&=2|c_2|\cdot |\det\big(\vec s^\perp(A), \vec s(A)\big)|\\
			&=2|c_2|.
		\end{align*}
		On the other hand, it holds that
		\begin{align*}
			|c_2\sigma_1(A)|&=\|A c_2\vec s^\perp(A)\|\\
			&=\|A\vec z-c_1A\vec s(A)\|\\
			&\le \|A\vec z\|+\|A\vec s(A)\|\\
			&\le \delta+\sigma_2(A),
			\end{align*}
		which combined with the estimate above clearly imply the desired result.
		\end{proof}
		We also need the following simple fact that it holds for all $A\in\mathrm{GL}(2,\C)$ that
	\begin{equation}\label{eq:sv_A_A-1}
		\sigma_1(A^{-1})=\frac1{\sigma_2(A)}\mbox{ and } \sigma_2(A^{-1})=\frac1{\sigma_1(A)}.
\end{equation}
		
Now let us go back to a bounded sequence $B:\Z\to \MM(2,\C)$ that satisfies both $\SVG$ and $\FI$. For the rest part of this section, our main goal is to show Conditions (a)-(c) for such a $B$ has dominated splitting. First, we define 
		\[
		s_n(j)=s(B_n(j)) \mbox{ and }u_n(j)=u(B_n(n-j)).
		\]
		Note that $s_n(j)$ and $u_n(j)$ are well defined for all $j$ and all $n$ large since $\SVG$ implies that $B_n(j)\notin \mathscr D$ for such $n$ and $j$'s.
\begin{lemma}\label{l:SVGImpliesExistence}
    There exist two $B$-invariant maps $E^u$ and $E^s: \Z \rightarrow \mathbb{C P}^1$ such that
$$ \lim _{n \rightarrow \infty}s_n=E^s\mbox{ and }\lim _{n \rightarrow \infty}u_n=E^u,$$
where the convergence is uniform in $j\in\Z$. 
\end{lemma}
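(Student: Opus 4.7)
The plan is to obtain $E^s$ and $E^u$ as uniform limits of Cauchy sequences in $\C\PP^1$ using Lemma~\ref{l:z_close_to_s} as the workhorse, and then read off $B$-invariance from the factorization $B_{n+1}(j)=B_n(j+1)B(j)$. For well-definedness, the first ratio in $\SVG$ combined with $\sigma_1(B_{n+1}(j))\le M\sigma_1(B_n(j))$ gives $\sigma_2(B_n(j))/\sigma_1(B_n(j))\le CM\mu^{-n}$, so $B_n(j)\notin\mathscr D$ for all sufficiently large $n$, uniformly in $j$, and then $s_n(j)$ and $u_n(j)$ are well defined. (I also interpret the superscript in the definition of $u_n$ as $u(B_n(j-n))$, which appears to be a typo in the excerpt.)

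\textbf{Uniform convergence.} For $s_n$, apply Lemma~\ref{l:z_close_to_s} with $A=B_{n+1}(j)$ and $z=s_n(j)$: since $B_{n+1}(j)=B(j+n)B_n(j)$ one has $\|B_{n+1}(j)\vec s_n(j)\|\le M\sigma_2(B_n(j))$, and combining this with $\sigma_2(B_{n+1}(j))\le M\sigma_2(B_n(j))$ and the first ratio of $\SVG$ yields $d(s_n(j),s_{n+1}(j))\le C\mu^{-n}$ uniformly in $j$; telescoping produces the uniform limit $E^s$. For $u_n$, the duality $u(A)^\perp=s(A^*)$ together with \eqref{eq:orth_prev_dist} converts $d(u_n(j),u_{n+1}(j))$ into $d\bigl(s(B_n(j-n)^*),\,s(B(j-n-1)^*B_n(j-n)^*)\bigr)$; since singular values are invariant under adjoint, the same computation — now invoking the \emph{second} ratio of $\SVG$ at $k=j-n-1$ — gives $d(u_n(j),u_{n+1}(j))\le C\mu^{-n}$ and a uniform limit $E^u$.

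\textbf{Invariance.} For $E^s$, set $\vec w_{n+1}:=B(j)\vec s_{n+1}(j)$; the factorization $B_{n+1}(j)=B_n(j+1)B(j)$ gives $\|B_n(j+1)\vec w_{n+1}\|=\sigma_2(B_{n+1}(j))$. Applying Lemma~\ref{l:z_close_to_s} to $A=B_n(j+1)$ and $z=\pi(\vec w_{n+1})$ (when $\vec w_{n+1}\neq 0$), together with $\sigma_2(B_{n+1}(j))\le M\sigma_2(B_n(j+1))$ and the estimate $\sigma_2(B_n(j+1))/\sigma_1(B_n(j+1))\le CM\mu^{-n}$ (obtained from $\SVG$ and $\sigma_1(B_{n+1}(j))\le M\sigma_1(B_n(j+1))$), one finds
\[
d\bigl(B(j)[s_{n+1}(j)],\,s_n(j+1)\bigr)\le \frac{C\mu^{-n}}{\|\vec w_{n+1}\|}+C\mu^{-n}.
\]
If $B(j)[E^s(j)]\neq\{0\}$, then $\|\vec w_{n+1}\|$ is eventually bounded below by a positive constant and passing $n\to\infty$ together with continuity of the projective action away from the kernel yields $B(j)[E^s(j)]=E^s(j+1)$; if $B(j)[E^s(j)]=\{0\}$, then $B(j)[E^s(j)]\subseteq E^s(j+1)$ holds trivially. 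For $E^u$, the same argument applied to the adjoint sequence $\tilde B(k):=B(-k-1)^*$, which by direct computation satisfies $\SVG$ with the two ratios interchanged, gives invariance of $\tilde E^s$; this dualizes back to $B$-invariance of $E^u$ via $u(A)^\perp=s(A^*)$.

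\textbf{Main obstacle.} The delicate step is the invariance of $E^s$ at sites where $B(j)$ is non-invertible: the estimate above carries an unwanted factor $1/\|\vec w_{n+1}\|$ that cannot in general be bounded uniformly in $n$. The remedy is qualitative rather than quantitative — precisely when that factor threatens to blow up, $E^s(j)\subseteq\ker B(j)$, and the containment becomes automatic, so a dichotomy bypasses the need for any uniform lower bound. Notably, only $\SVG$ is used throughout; $\FI$ plays no role in this existence step, consistent with the introductory remark that $\SVG$ alone produces $E^s$ and $E^u$.
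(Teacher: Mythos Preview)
Your proof is correct and follows the same overall strategy as the paper: obtain $E^s$ and $E^u$ as uniform Cauchy limits via Lemma~\ref{l:z_close_to_s}, then verify invariance. The convergence arguments are essentially identical (including your correct reading of $u_n(j)=u(B_n(j-n))$).

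The invariance step is where you diverge. For $E^s$, the paper splits into invertible and singular cases at the outset: when $\det B(j)\neq 0$ it \emph{pulls back} $s_n(j+1)$ through $B(j)^{-1}$ and compares to $s_{n+1}(j)$; when $\det B(j)=0$ it observes directly that $s_n(j)=\ker B(j)$ for all $n\ge 1$, so $E^s(j)=\ker B(j)$. You instead \emph{push forward} $s_{n+1}(j)$ through $B(j)$ and compare to $s_n(j+1)$, which avoids inverses but produces the factor $\|\vec w_{n+1}\|^{-1}$; your dichotomy (either this factor is eventually bounded below, or $E^s(j)\subseteq\ker B(j)$ and containment is automatic) is the clean way to dispose of it. Both arguments are valid; yours is slightly more unified since the case split is driven by the geometry of $E^s(j)$ rather than by whether $B(j)$ happens to be invertible. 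For $E^u$, the paper repeats the case analysis by hand in the adjoint picture, whereas your time-reversal $\tilde B(k)=B(-k-1)^*$ reduces everything to the $E^s$ argument already done; this is a genuine economy the paper does not exploit. Your closing remark that only $\SVG$ is used here matches the paper's narrative.
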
 
\begin{proof}

Fix any $j\in \Z$. To find $E^s(j)$, we first note that
\begin{align*}
\|B_{n+1}(j)\vec s_{n}(j)\|=\|B(j+n)B_{n}(j)\vec s_{n}(j)\|&\leq\|B(j+n)\|\sigma_2(B_{n}(j))<M\sigma_2(B_{n}(j)).
\end{align*}
Applying Lemma~\ref{l:z_close_to_s} with $z=s_n(j)$, $A=B_{n+1}(j)$, and $\delta=M\sigma_2(B_{n}(j))$ and by $\SVG$, we obtain
\begin{equation}\label{eq:s_nCauchy}
d\left(s_{n}(j), s_{n+1}(j)\right)\le\frac{2\big(M\sigma_2(B_{n}(j))+\sigma_2(B_{n+1}(j))\big)}{\sigma_1(B_{n+1}(j))}<C\mu^{-n}.
\end{equation}
 Hence, $\left\{s_n(j)\right\}_{n \in \Z}$ is a Cauchy sequence which implies the existence of a $E^s(j)\in\mathbb{C P}^1$ such that
$$
\lim _{n \rightarrow \infty}s_{n}(j)=E^s(j).
$$ 
Moreover, \eqref{eq:s_nCauchy} clearly implies that 
\begin{equation}\label{eq:s_nUconvToEs}
	d(s_n(j), E^s(j))<C\mu^{-n}\mbox{ for  all } j\in\Z.
	\end{equation}
To show $B$-invariance of $E^s$, we consider two cases:
\vskip .2cm

\noindent \textbf{Case s.I}: $\det(B(j))\neq 0$. Set $\vec v=\frac{B(j)^{-1}\vec s_n(j+1)}{\|B(j)^{-1}\vec s_n(j+1)\|}\in B(j)^{-1}\cdot s_n(j+1)$. Noticing that $\|B(j)^{-1}\vec s_n(j+1)\|^{-1}\le 1/\sigma_2(B(j)^{-1})=\sigma_1(B(j))\le M$, we obtain: 
\begin{align*}
	\|B_{n+1}(j)\vec v\|\le M\|B_{n}(j+1)\vec s_{n}(j+1)\|= M\sigma_2(B_{n}(j+1)).
\end{align*}
Applying Lemma~\ref{l:z_close_to_s} with $z=B(j)^{-1}\cdot s_n(j+1)$, $A=B_{n+1}(j)$, and $\delta=M\sigma_2(B_{n}(j+1))$ and by $\SVG$, we obtain
$$
d(B(j)^{-1}\cdot s_n(j+1),s_{n+1}(j))\le \frac{2M\sigma_2(B_n(j+1))+2\sigma_2(B_{n+1}(j))}{\sigma_1(B_{n+1}(j))}\le C\mu^{-n},
$$
which implies
\begin{align*}
B(j)^{-1}\cdot E^s(j+1)&=B(j)^{-1}\cdot\lim _{n\to\infty} s_n(j+1)\\
&=\lim _{n\to \infty} B(j)^{-1}\cdot s_n(j+1)\\
&=\lim _{n\to \infty} s_{n+1}(j)\\
&=E^s(j) .
\end{align*}
\vskip .2cm

\noindent \textbf{Case s.II}: $\det(B(j))= 0$. Then it is clear that $s_n(j)=\ker (B(j))$ for all $n\ge 1$ which implies $E^s(j)=\ker (B(j))$ and
$$
B(j)\left[E^s\left(j\right)\right]=\{\vec 0\} \subset E^s\left(j+1\right)
$$

Now we consider $E^u(j)$. Recall that $u_n(j)^\perp$ is the most contracted direction of  $B^*_n(n-j)$, which implies that
\begin{align*}
\|B_{n+1}^*(j-n-1)\vec u_{n}^\perp(j)\|&=\|B^*(j-n-1)B^*_{n}(j-n)\vec u_{n}^\perp(j)\|\le M\sigma_2(B_n(j-n)).
\end{align*}
Applying Lemma~\ref{l:z_close_to_s} with $z=u^\perp_n(j)$, $A=B^*_{n+1}(j-n-1)$, and $\delta=M\sigma_2(B_{n}(j-n))$ and by $\SVG$, we obtain that
\begin{align*}
d(u^\perp_{n}(j), u^\perp_{n+1}(j))<\frac{2M\sigma_2(B_{n}(j-n))+2\sigma_2(B_{n+1}(j-n-1))}{\sigma_1(B_{n+1}(j-n-1))}<C\mu^{-n}.
\end{align*}
By \eqref{eq:orth_prev_dist}, we then get
$$
d(u_{n}(j), u_{n+1}(j))=d(u^\perp_{n}(j), u^\perp_{n+1}(j))<C\mu^{-n},
$$
which implies the existence of $E^u: \mathbb{Z} \to \mathbb{C P}^1$ such that
$\lim\limits_{n \rightarrow \infty}u_{n}(j)=E^u(j)$ and
\begin{equation}\label{eq:u_nUconvToEu}
d(u_n(j), E^u(j))<C\mu^{-n}.	
	\end{equation}
Note that by the fact $\mathscr O(z)=z^\perp$ is an isometry (hence, continuous) on $\C\PP^1$, we also have 
\begin{equation}\label{eq:u_perpConvToEu_perp}
	\lim_{n\to\infty}u^\perp_n(j)=[E^u(j)]^\perp
	\end{equation}
To show that $E^u$ is $B$-invarint, we again consider two case:
\vskip .2cm

\noindent \textbf{Case u.I}: $\det(B(j))\neq 0$. Set $\vec v= \frac{[B^*(j)]^{-1}\vec  u^\perp_{n}(j)}{\|[B^*(j)]^{-1}\vec  u^\perp_{n}(j)\|}\in [B^*(j)]^{-1}\cdot u^\perp_{n}(j)$. It clearly holds that
\begin{align*}
\|B^*_{(n+1)}(j-n)\vec v\|&=\frac{\|B^*_{n}(j-n)B^*(j)[B^*(j)]^{-1}\vec  u^\perp_{n}(j)\|}{\|[B^*(j)]^{-1}\vec  u^\perp_{n}(j)\|}\\
&\le M\|B^*_{n}(j-n)\vec u^\perp_n(j)\|\\
&=M\sigma_2(B_n(j-n)).
\end{align*}
Applying Lemma~\ref{l:z_close_to_s} with $z=[B^*(j)]^{-1}\cdot u^\perp_{n}(j)$, $A=B^*_{(n+1)}(j-n)$, and $\delta=M\sigma_2(B_n(j-n))$ and by $\SVG$, we obtain that
\[
	d(B^*(j)]^{-1}\cdot u^\perp_{n}(j), u^\perp_{n+1}(j+1))\le \frac{2\big(M\sigma_2(B_{n}(j-n))+\sigma_2(B_{n+1}(j-n))\big)}{\sigma_1(B_{n+1}(j-n))}<C\mu^{-n}.
\]
Letting $n$ tends to $\infty$, we obtain $B^*(j)\cdot [E^u(j+1)]^\perp=[E^u(j)]^\perp$, which in turn implies
\[
	\langle \vec E^\perp_{n}(j+1), B(j)\vec E^u(j)\rangle=\langle B^*(j)\vec E^\perp_{n}(j+1), \vec E^u(j)\rangle=0,
\]
and hence, $B(j)\cdot  E^u(j)=E^u(j+1)$.
\vskip .2cm

\noindent \textbf{Case u.II}: $\det(B(j))= 0$. Then it is clear that the most contracted direction $u^\perp_n(j+1)$ of $B^*_n(j+1-n)=B^*_{n-1}(j+1-n)B^*(j)$ is $\ker [B^*(j)]$ for all $n\ge 1$ which implies 
\[
[E^u(j+1)]^\perp =\ker (B^*(j)),
\]
which in turn implies that for all $\vec v\in\C^2$:
\[
\langle \vec E^\perp_{n}(j+1), B(j)\vec v\rangle=\langle B^*(j)\vec E^\perp_{n}(j+1),v\rangle=0,
\]
and hence $E^u(j+1)=\mathrm{Im}(B(j))=B(j)(\C^2)$. In particular, $B(j)[E^u(j)]\subset E^u(j+1)$.
\end{proof}

\begin{remark}\label{r:InvDirectionSingularCase}
	Although we may not need the following facts, it is worthwhile to point them out. By the proof of Lemma~\ref{l:SVGImpliesExistence}, we notice that if $\det(B(j))=0$, then $E^s(j)=\ker(B(j))$ and $E^u(j+1)=\mathrm{Im}(B(j))$. In fact, the same proof provides us with more information. Using the same $j$ as above and we have for all $j'\le j$:
	\begin{equation}\label{eq:Es_singularCase}
	E^s(j')=\ker(B_{j-j'+1}(j')).
	\end{equation}
	Indeed, we just need to notice that in this case, $s_n(j')=\ker(B_{j-j'+1}(j'))$ for all $n\ge j-j'+1$. Likewise, we have for all $j'>j$:
		\begin{equation}\label{eq:Eu_singularCase}
		E^u(j')=\mathrm{Im}(B_{j'-j}(j)).
	\end{equation}
	Indeed, in this case, like in the proof of Lemma~\ref{l:SVGImpliesExistence}, we have for all $n> j'-j$:
	\[u^\perp_n(j')=s(B^*_n(j'-n))=s(B^*_n(n-j'+j)B^*_{j'-j}(j))=\ker (B^*_{j'-j}(j)),
	\] 
which implies that $u_n(j')=\mathrm{Im}(B_{j'-j}(j))$ for all such $n>j'-j$.
	\end{remark}
	
\subsection{Separation of $E^s$ and $E^u$}

Now we consider to show that $\inf_{j\in\Z}d(E^u(j), E^s(j))>0.$ We divide the proof into two steps: first, we show $E^u(j)\neq E^s(j)$ for all $j$ in case $\det(B(j))=0$ for some; second, we show that $E^u(j)\neq E^s(j)$ for all other cases. It turns out the argument in the second step naturally gives us that $d(E^u(j), E^s(j))$ is bounded away from $0$ uniformly. Once we have $\inf_{j\in\Z}d(E^s(j), E^u(j))>0$, Condition (b) follows easily from $\SVG$.

\begin{lemma}\label{l:EsNeqEu_singularCase}
    Suppose $\det(B(j))=0$ for some $j$. Then $E^u(j)\neq E^s(j)$ for all $j\in\Z$.
\begin{proof}
Fix any arbitrarily $j$. We have two different cases: Case I, there is a $j'<j$ so that $\det(B(j'))=0$; or Case II, there is a $j'\ge j$ so that $\det(B(j'))=0$. By $B$-invariance of $E^{u(s)}$ and the fact that $B(j')E^{u(s)}(j')=E^{u(s)}(j')$ when $\det(B(j'))\neq 0$, we may further reduce the two cases to: Case I: $\det (B(j-1))=0$; Case II: $\det(B(j))=0$. For instance in Case I, if we set $j_0=\max\{j'<j: \det(B(j))=0\}<j-1$, then we have by the facts above $E^s(j)=E^u(j)$ if and only if 
\[
E^s(j_0)=B_{j_0-j}(j)E^s(j)=B_{j_0-j}(j)E^u(j)=E^u(j_0).
\]
For simplicity, from now on we write unit vectos in $E^s(j)$ and $E^u(j)$ as $\vec s(j)$ and $\vec u(j)$, respectively. Likewise, $\vec s^\perp(j)$and $\vec u^\perp (j)$ are unit vectors that are orthogonal to $\vec s(j)$ and $\vec u(j)$, respectively.
\vskip .2cm
   
\noindent \textbf{Case I}. Since $\det (B(j-1))=0$, we have  $E^u(j)=\mathrm{Im}(B(j-1))$ by Remark~\ref{r:InvDirectionSingularCase}. Hence, $\frac{B(j-1)\vec v}{\|B(j-1)\vec v\|}$ can be $\vec u(j)$ for all $\vec v\notin \ker (B(j-1))$. In particular, we can choose $\vec v=\vec s^{\perp}_{n+1}(j-1)$ which belongs to the most expanding direction of $B_{n+1}(j-1)$ (hence cannot be in $\ker(B(j-1))$).
then we obtain 
$$
\|B_{n}(j)\vec u(j)\|=\frac{\|B_{n+1}(j-1)\vec s^\perp_{n+1}(j-1)\|}{\|B(j-1)\vec v\|}\ge \frac{\sigma_1(B_{n+1}(j-1))}{M}.
$$

Now we let $\vec s(j)=c_1\vec s_{n}(j)+c_2\vec s^{\perp}_{n}(j)$, where $|c_1|^2+|c_2|^2=1$ and $|c_2|=d(E^s(j), s_n(j))<C\mu^{-n}$. Then we have
\begin{align*}
\|B_{n}(j)\vec s(j)\|&\le |c_1|\|B_{n}(j)\vec s_{n}(j)\|+|c_2|\|B_{n}(j)\vec s^{\perp}_{n}(j)\|\\
&\le \sigma_2(B_n(j))+C\mu^{-n}\sigma_1(B_n(j)).
\end{align*}
Combine the two estimates above together with $\SVG$ and $\FI$, we obtain
\begin{align*}
\frac{\|B_n(j)\vec s(j)\|}{\|B_n(j)\vec u(j)\|}&\le M\frac{\sigma_2(B_n(j))+C\mu^{-n}\sigma_1(B_n(j))}{\sigma_1(B_{n+1}(j-1))}\\
&\le M\left[\frac{\sigma_2(B_n(j))}{\sigma_1(B_{n+1}(j-1))}+C\mu^{-n}\frac{\sigma_1(B_n(j))}{\sigma_1(B_{n+1}(j-1))}\right]\\
&\le CM\mu^{-n}+C\mu^{-n}\mu^{(1-\e)n}\\
&\le C\mu^{-\e n}<1,
\end{align*}
for $n$ sufficiently large, which clearly implies that $E^u(j)\neq E^s(j)$.
\vskip .2cm
\noindent \textbf{Case II}. To show $E^s(j)\neq E^u(j)$, we equivalently show $[E^s(j)]^\perp\neq [E^u(j)]^\perp$. Since $\det(B(j))=0$, we have $E^s(j)=\ker(B(j))$ by Remark~\ref{r:InvDirectionSingularCase}. This is clearly equivalent to $[E^s(j)]^\perp=\mathrm{Im}(B^*(j))$. Hence, $\frac{B^*(j)\vec v}{\|B^*(j)\vec v\|}$ can be our $\vec s^\perp(j)$ for all $\vec v\notin\ker(B^*(j))$. In particular, we may choose $\vec v=u_n(j+1)$ which is the most expanding direction of $B^*_{n+1}(j-n)=B^*_n(j-n)B^*(j)$ (hence cannot be in $\ker(B^*(j))$). This  implies that
\[
\|B^*_n(j-n)\vec s^\perp(j)\|=\frac{\|B^*_{n+1}(j-n)\vec u_n(j+1)\|}{\|B^*(j)\vec v\|}\ge\frac{\sigma_1(B_{n+1}(j-n))}{M}.
\] 

On the other hand, recall that we have  $d([E^u(j)]^\perp, u^\perp_n(j))<C\mu^{-n}$ where $u^\perp_n(j)$ is the most contracted direction of $B^*_{n}(j-n)$. Now we write $\vec u^\perp(j)=c_1\vec u^\perp_n(j)+c_2\vec u_n(j)$ where $|c_1|^2+|c_2|^2=1$ and $|c_2|=d([E^u(j)]^\perp, u^\perp_n(j))$. Then we have
\begin{align*}
\|B^*_{n}(j-n)\vec u^\perp(j)\|&\le |c_1|\|B_{n}(j-n)\vec u^\perp_{n}(j)\|+|c_2|\|B_{n}(j-n)\vec u_{n}(j)\|\\
&\le \sigma_2(B_n(j-n))+C\mu^{-n}\sigma_1(B_n(j-n)).
\end{align*}

The same proof as in Case I shows that it holds for all large $n$:
\[
\frac{\|B^*_{n}(j-n)\vec u^\perp(j)\|}{\|B^*_n(j-n)\vec s^\perp(j)\|}<1,
\]
which implies that $[E^s(j)]^\perp\neq [E^u(j)]^\perp$, hence $E^s(j)\neq E^u(j)$.

\end{proof}
\end{lemma}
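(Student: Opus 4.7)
The plan is to show that the two invariant line fields produced by Lemma~\ref{l:SVGImpliesExistence} must disagree at every index once some $B(j_0)$ is singular. My first move is a reduction via $B$-invariance: if $\det B(j')\neq 0$, then $B(j')$ carries $E^s(j')$ and $E^u(j')$ bijectively onto $E^s(j'+1)$ and $E^u(j'+1)$, so $E^s(j')=E^u(j')$ is equivalent to $E^s(j'+1)=E^u(j'+1)$. Iterating, I replace the given $j$ by the nearest singular index and reduce to one of two cases: Case I, $\det B(j-1)=0$ (use the largest singular $j_1<j$ and translate forward); or Case II, $\det B(j)=0$ (use the smallest singular $j_1\geq j$ and translate backward).

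In each reduced case Remark~\ref{r:InvDirectionSingularCase} furnishes an algebraic description of one of the two subspaces: in Case I, $E^u(j)=\mathrm{Im}\,B(j-1)$; in Case II, $E^s(j)=\ker B(j)$, i.e.\ $[E^s(j)]^\perp=\mathrm{Im}\,B^*(j)$. The strategy is then to compare how $B_n(j)$ (or $B_n^*(j-n)$) acts on unit vectors spanning the two candidate lines. For Case I, I would represent $\vec u(j)\propto B(j-1)\vec v$ and take $\vec v$ to be the most expanding direction of $B_{n+1}(j-1)$, which gives the lower bound $\|B_n(j)\vec u(j)\|\geq \sigma_1(B_{n+1}(j-1))/M$. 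For $\vec s(j)$, the convergence estimate \eqref{eq:s_nUconvToEs} lets me decompose $\vec s(j)=c_1\vec s_n(j)+c_2\vec s_n^\perp(j)$ with $|c_2|<C\mu^{-n}$, yielding $\|B_n(j)\vec s(j)\|\leq \sigma_2(B_n(j))+C\mu^{-n}\sigma_1(B_n(j))$.

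Combining the two bounds, the ratio $\|B_n(j)\vec s(j)\|/\|B_n(j)\vec u(j)\|$ is controlled by a constant times
\[
\frac{\sigma_2(B_n(j))}{\sigma_1(B_{n+1}(j-1))}+\mu^{-n}\cdot\frac{\sigma_1(B_n(j))}{\sigma_1(B_{n+1}(j-1))}.
\]
The first summand is $O(\mu^{-n})$ by $\SVG$ and the second is $O(\mu^{-n}\cdot\mu^{(1-\e)n})=O(\mu^{-\e n})$ by $\FI$, so the ratio is strictly less than $1$ for $n$ large and forces $\vec u(j)\neq\vec s(j)$ as directions. Case II is handled dually: work with $B^*$ in place of $B$, write $\vec s^\perp(j)\propto B^*(j)\vec v$ with $\vec v$ along the most expanding direction of $B_{n+1}^*(j-n-1)$, decompose $\vec u^\perp(j)$ against $u_n^\perp(j)$ using \eqref{eq:u_perpConvToEu_perp}, and transfer the resulting inequality $[E^s(j)]^\perp\neq[E^u(j)]^\perp$ back to $E^s(j)\neq E^u(j)$ via the isometry \eqref{eq:orth_prev_dist}.

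The main obstacle I anticipate is the bookkeeping of index shifts so that $\SVG$ and $\FI$ apply exactly as stated. Both conditions are formulated with the denominator $\sigma_1(B_{n+1}(j))$, a shift of one relative to the numerator; the trick of choosing $\vec v$ in the most expanding direction of $B_{n+1}(j-1)$ (rather than of $B_n(j)$) is precisely what makes the lower bound match the denominators appearing in $\SVG$ and $\FI$. Once this alignment is in place, the proof reduces to a one-line exponent comparison, and the flexibility that $\e$ in $\FI$ may be taken arbitrarily small leaves ample slack for the $\mu^{-n}$ coming from $s_n\to E^s$ to defeat the quasi-growth term.
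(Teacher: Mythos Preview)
Your proposal is correct and follows essentially the same route as the paper's proof: the same reduction via $B$-invariance to the two cases $\det B(j-1)=0$ and $\det B(j)=0$, the same representation of $\vec u(j)$ (resp.\ $\vec s^{\perp}(j)$) through the image of the singular factor, the same choice of $\vec v$ in the most expanding direction to manufacture the denominator $\sigma_1(B_{n+1}(\cdot))$, and the same decomposition of the other vector against $s_n$ (resp.\ $u_n^\perp$) so that $\SVG$ and $\FI$ apply verbatim. The only slip is in Case II, where the relevant operator is $B^*_{n+1}(j-n)=B^*_n(j-n)B^*(j)$ rather than $B^*_{n+1}(j-n-1)$; once this index is fixed, your argument and the paper's are identical.
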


Now we introduce the definition of $\MM(2, \C)$-cocycles. Let $\Omega$ be a compact metric space, $T$ be a homeomorphism in $\Omega$, and $A:\Omega\to \MM(2, \C)$ be continuous . Then we consider the following dynamical system:
$$
(T, A): \Omega \times \mathbb{C}^2 \rightarrow \Omega \times \mathbb{C}^2,(T, A)(\omega, \vec{v})=(T \omega, A(\omega) \vec{v}) .
$$

Iterations of dynamics are denoted by $\left(T^n, A_n\right):=(T, A)^n$. In particular, we have
$$
A_n(\omega)= \begin{cases}B\left(T^{n-1} \omega\right) \cdots B(\omega), & n \geq 1, \\ I_2, & n=0,\end{cases}
$$ and $A_{-n}(\omega)=\left[A_n\left(T^{-n} \omega\right)\right]^{-1}, n \geq 1$, where all matrices involved are invertible.  Similar to $\MM(2,\C)$-sequences, we can define $\SVG$ and $\FI$ for $\MM(2,\C)$-cocycles as follows:
\begin{align*}
	&\SVG\hskip 1.65cm \sup_{\omega\in\Omega}\left\{\frac{\sigma_2(A_n(\omega))}{\sigma_1(A_{n+1}(\omega))},\  \frac{\sigma_2(A_n(T\omega))}{\sigma_1(A_{n+1}(\omega))}\right\}<C\mu^{-n} \mbox{ for all } n\ge 0;\\
		&\FI\hskip 2cm \sup_{\omega\in\Omega}\left\{\frac{\sigma_1(A_n(\omega))}{\sigma_1(A_{n+1}(\omega))},\  \frac{\sigma_1(A_n(T\omega))}{\sigma_1(A_{n+1}(\omega))}\right\}<C\mu^{(1-\e)n} \mbox{ for all } n\ge 1,
\end{align*}
where we use the same parameters and notations as in the case of $\MM(2,\C)$-sequences.

We can actually go from a bounded $\MM(2\C)$ sequence to $\MM(2,\C)$-cocycle as follows. Let $\mathcal{B}^{\mathbb{Z}}$ be the space of full shift generated by a set of alphbets $\mathcal{B}$. Suppose $\mathcal{B}$ is a compact topological space and $\mathcal{B}^{\mathbb{Z}}$ be equipped with the product topology, then $\mathcal{B}^{\mathbb{Z}}$ is a compact topologic space as well. Let $T: \mathcal{B}^{\Z}  \rightarrow \mathcal{B}^{\Z} $ be the operator of left shift, i.e.

$$
(T \omega)_{n}=\omega_{n+1} \text { for } \omega=(\omega_{n})_{n \in \Z }\in \mathcal{B}^{\mathbb{Z}}
$$

\begin{defi}
For each $\omega \in \mathcal{B}^{\mathbb{Z}}$, the Hull of $\omega$ is defined  as $\overline{\{T^n(\omega):\ n \in \Z\}}$, i.e. the closure of the $T$-orbit of $\omega$ under the product topology, and is denoted by $\mathrm{Hull}(\omega)$. Clearly, $\mathrm{Hull}(\omega)$ is a compact topological space that is invariant under $T$.
\end{defi}

Now take $\mathcal{B}=B_M[\mathrm{M}(2, \mathbb{C})]$ where $B_M$ denotes the ball in $\mathrm{M}(2, \mathbb{C})$ with operator norm less than or equal to $M$. Then pick an element $B: \mathbb{Z} \rightarrow B_M[\mathrm{M}(2, \mathbb{C})]$ in $\mathcal{B}^{\mathbb{Z}}$ and  set $\Omega=\overline{\left\{T^n(B)\right\}_{n \in \mathbb{Z}}}=\operatorname{Hull}(B)$. Clearly, $T: \Omega \rightarrow \Omega$ is a homeomorphism. Let $F: \Omega \rightarrow \operatorname{M}(2, \mathbb{C})$ be the evaluation map at the 0-position, i.e. $F(\omega)=\omega_{0}$. Then consider the cocycle $(T, F): \Omega \times \mathbb{C}^2 \rightarrow \Omega \times \mathbb{C}^2$ as $\left(T^n, F_n\right)=(T, F)^n$ with $F_n(\omega)=\omega_{n-1} \cdots \omega_0$. 

\begin{prop}\label{p:seq_to_cocycle_svgFI}
Let $B$ and $(\Omega, T, F)$ be as above. Then $B$ satisfies $\SVG$ and $\FI$ if and only if so is $F$.

\begin{proof}
The if part is obvious via the relation $B_n(j)=F_n(T^j B)$. For the only if part, we first note that in product topology $\omega^{(k)}$ converges to $\omega$ means pointwise convergence. In other words, $\omega^{(k)}_n$ converges to $\omega_n$ as $k\to\infty$ for each $n\in\Z$. 

Now for every $\omega\in\Omega$, we can find $j$ such that $T^j(B)$ is sufficiently close to $\omega$ since $T^j(B)$ is dense in $\Omega=\mathrm{Hull}(B)$. In particular, for every $n\ge 0$, we can choose $j$ so that $B_n(j)=F_n(T^jB)$ to be sufficiently close to $F_n(\omega)$. Now by the fact $\MM(2,\C)\setminus\mathscr D$ is an open set where $\sigma_1$ and $\sigma_2$ are continuous, we can then pass both $\SVG$ and $\FI$ from $B$ to $F$. Indeed, taking $\SVG$ as an example, we have for all $n\ge 0$ and all $j\in\Z$:
\begin{equation}\label{eq:svg_nj}
\frac{\sigma_2(B_n(j))}{\sigma_1(B_{n+1}(j))}<C\mu^{-n}.
\end{equation}
By choosing $j$ so that $B_n(j)$ is sufficiently close to $F_n(\omega)$, we first obtain $F_n(\omega)\notin\mathscr D$ since so is $B_n(j)$. This in turn implies that $\sigma_i(B_n(j))$ is sufficiently close to $\sigma_i(B_n(j))$ for $i=1,2$. Hence, \eqref{eq:svg_nj} implies that
\[
\frac{\sigma_2(F_n(\omega))}{\sigma_1(F_{n+1}(\omega))}<C\mu^{-n}.
\]
Note that the estimate above is independent of $\omega\in\Omega$. All other inequalities contained in $F$'s $\SVG$ and $\FI$ follows from the same fashion as they all only involve $\sigma_1(F_n(\omega))$ and $\sigma_2(F_n(\omega))$ for some $n\ge 0$.

\end{proof}
\end{prop}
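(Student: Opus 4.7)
The ``if'' direction is immediate: for each $j\in\Z$, the point $T^j B$ lies in $\Omega$, and by construction $B_n(j)=F_n(T^j B)$ (and $B_n(j+1)=F_n(T^{j+1}B)$), so a bound over $\omega\in\Omega$ restricts to the required uniform bound over $j\in\Z$. The substance is in the ``only if'' direction, which I would prove by a density-plus-continuity argument.

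Fix an arbitrary $\omega\in\Omega$ and $n\ge 0$. Since $\Omega=\mathrm{Hull}(B)$ carries the product topology, there is a sequence $j_k\in\Z$ with $T^{j_k}B\to\omega$; by definition of the product topology this just means $B(j_k+m)\to\omega_m$ in $\MM(2,\C)$ for each fixed $m\in\Z$. Because $F_n(\omega)=\omega_{n-1}\cdots\omega_0$ involves only finitely many coordinates and matrix multiplication is jointly continuous, I obtain
\[
F_n(T^{j_k}B)=B_n(j_k)\ \longrightarrow\ F_n(\omega),\qquad B_{n+1}(j_k)\ \longrightarrow\ F_{n+1}(\omega),
\]
and analogously $B_n(j_k+1)\to F_n(T\omega)$. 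Next I would invoke the $\SVG$ and $\FI$ inequalities known for $B$ at each $j_k$ and pass to the limit term by term. The operator norm $\sigma_1=\|\cdot\|$ is globally continuous, so the $\sigma_1$-quantities converge without issue.

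The one delicate point is $\sigma_2(A)=|\det A|/\|A\|$, which is continuous only where $\|A\|>0$. This is exactly where Lemma~\ref{l:uLargeNorm} enters: the $\SVG$ and $\FI$ of $B$ already force $\inf_{j\in\Z}\|B_n(j)\|>0$ for every fixed $n$, and this lower bound transfers under the limit to give $\|F_n(\omega)\|>0$. Consequently $F_n(\omega)\notin\mathscr D$ (in fact, $\SVG$ forces a strict singular-value gap after passing to the limit), and $\sigma_2(B_n(j_k))\to\sigma_2(F_n(\omega))$. Taking $k\to\infty$ in the $\SVG$ and $\FI$ inequalities for $B$ at $j_k$ then yields the identical inequalities for $F$ at $\omega$, with the same constants $C,\mu,\e$; since $\omega\in\Omega$ was arbitrary this establishes $\SVG$ and $\FI$ for the cocycle $(T,F)$.

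The only real obstacle is guarding against a degenerate limit, i.e.\ $F_n(\omega)=0$ or $\|F_n(\omega)\|=0$, which would make $\sigma_2$ discontinuous. That is neutralized up front by the uniform lower norm bound from Lemma~\ref{l:uLargeNorm}; everything else is a routine continuity-under-product-topology exercise.
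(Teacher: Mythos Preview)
Your argument is correct and follows essentially the same density-plus-continuity route as the paper: restrict for the ``if'' direction, and for the ``only if'' direction approximate an arbitrary $\omega\in\Omega$ by shifts $T^{j_k}B$, use that $F_n$ depends on only finitely many coordinates, and pass the inequalities to the limit via continuity of $\sigma_1,\sigma_2$. The only cosmetic difference is in how non-degeneracy of the limit is justified: the paper phrases it through $\MM(2,\C)\setminus\mathscr D$ being open, whereas you invoke Lemma~\ref{l:uLargeNorm} to secure $\|F_n(\omega)\|>0$ directly---your formulation is arguably cleaner, since the real issue is that the \emph{denominator} $\sigma_1(F_{n+1}(\omega))$ must stay positive (note $\sigma_2$ itself extends continuously to $0$ at the zero matrix, and $F_0=I_2\in\mathscr D$, so the $\mathscr D$-avoidance claim is not literally needed).
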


\begin{lemma}\label{l:EsAwayFromEu}
Suppose $\det(B(j))\neq 0$ for all $j\in\Z$. Then $E^s(j)\neq E^u(j)$ for all $j\in\Z$. Moreover, for all bounded $B:\Z\to\MM(2,\C)$ satisfying $\SVG$ and $\FI$, we have $\inf_{j\in\Z}d(E^u(j), E^s(j))>0$.
\end{lemma}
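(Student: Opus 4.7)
My plan is to embed $B$ into its hull, promote the invariant directions to continuous maps on a compact base, and bootstrap from the singular case already handled in Lemma~\ref{l:EsNeqEu_singularCase}. By Proposition~\ref{p:seq_to_cocycle_svgFI}, the evaluation cocycle $(T,F)$ over $\Omega=\mathrm{Hull}(B)$ satisfies $\SVG$ and $\FI$ with the same universal constants. For each $\omega\in\Omega$ the sequence $(F(T^k\omega))_{k\in\Z}$ inherits $\SVG$ and $\FI$, so Lemma~\ref{l:SVGImpliesExistence} produces invariant directions $E^s_\omega,E^u_\omega:\Z\to\C\PP^1$. I would set $E^s(\omega):=E^s_\omega(0)$ and $E^u(\omega):=E^u_\omega(0)$, and then show these are continuous on $\Omega$: the Cauchy estimates $d(s_n,s_{n+1}),d(u_n,u_{n+1})<C\mu^{-n}$ from the proof of Lemma~\ref{l:SVGImpliesExistence} depend only on the $\SVG$-constants, hence they hold uniformly in $\omega$ and give uniform convergence $s_n\to E^s$, $u_n\to E^u$; combined with $F_n(\omega)\notin\mathscr D$ uniformly for $n$ large (which follows from $\sigma_2(F_n)/\sigma_1(F_n)\le CM\mu^{-n}$), Lemma~\ref{l:svd_smooth} makes each $s_n,u_n$ continuous on $\Omega$, and a uniform limit of continuous maps is continuous.

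For the first statement, I assume $\det B(j)\neq 0$ for all $j$ and, toward contradiction, $E^s_B(j_0)=E^u_B(j_0)$ for some $j_0$. Invertibility upgrades forward invariance to genuine invariance, so $E^s_B(j)=E^u_B(j)$ for every $j$; equivalently $E^s(T^jB)=E^u(T^jB)$. Density of the orbit $\{T^jB\}$ in $\Omega$ together with continuity forces $E^s(\omega)=E^u(\omega)$ for every $\omega\in\Omega$. Now I would dichotomize: if some $\omega^*\in\Omega$ has a singular entry $\det F(T^k\omega^*)=0$, then Lemma~\ref{l:EsNeqEu_singularCase} applied to the sequence of $\omega^*$ gives $E^s(\omega^*)\neq E^u(\omega^*)$, a contradiction; otherwise no $\omega\in\Omega$ has a singular entry, whence compactness of $\Omega$ and continuity of $\det$ give $\inf_{\omega\in\Omega}|\det F(\omega)|>0$ and in particular $\inf_j|\det B(j)|>0$, placing $B$ in the classical $\mathrm{GL}(2,\C)$-regime where the normalization $A(j)=B(j)/\sqrt{\det B(j)}$ reduces to an $\mathrm{SL}(2,\C)$-sequence for which $\SVG$ is equivalent to uniform hyperbolicity (as reviewed in the introduction), again forcing $E^s\neq E^u$.

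For the moreover statement, the continuous function $\omega\mapsto d(E^s(\omega),E^u(\omega))$ on the compact space $\Omega$ is strictly positive everywhere: at any $\omega$ with a singular entry by Lemma~\ref{l:EsNeqEu_singularCase} applied to the sequence of $\omega$, and at any $\omega$ all of whose entries are nonsingular by the first statement just proved, again applied to the sequence of $\omega$. It therefore attains a positive minimum $\delta>0$ on $\Omega$; since $d(E^s_B(j),E^u_B(j))=d(E^s(T^jB),E^u(T^jB))\ge\delta$ for every $j$, this yields $\inf_{j\in\Z}d(E^u(j),E^s(j))\ge\delta>0$.

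The main obstacle I anticipate is verifying cleanly that $E^s,E^u$ extend continuously to all of $\Omega$, which amounts to combining the uniform Cauchy estimates across $\Omega$ with the uniform separation $\sigma_1(F_n)>\sigma_2(F_n)$ for large $n$ (so that $s_n,u_n$ live in the smooth locus of the singular value decomposition). Once continuity is in place, the dichotomy singular-in-hull vs.\ uniformly-invertible carries the rest, with Lemma~\ref{l:EsNeqEu_singularCase} and the classical $\mathrm{SL}(2,\C)$-theory handling the two alternatives.
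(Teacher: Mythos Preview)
Your proposal is correct and follows essentially the same route as the paper: embed $B$ into its hull, obtain continuous invariant directions via the uniform Cauchy estimates from Lemma~\ref{l:SVGImpliesExistence} together with Lemma~\ref{l:svd_smooth}, propagate a hypothetical equality $E^s=E^u$ to all of $\Omega$ by invertibility and density, and derive a contradiction either from Lemma~\ref{l:EsNeqEu_singularCase} (if the hull contains a singular element) or from the classical $\mathrm{SL}(2,\C)$ theory (if $\inf_j|\det B(j)|>0$); the ``moreover'' then follows from compactness. The only substantive difference is that the paper, for completeness, spells out the $\mathrm{SL}(2,\C)$ case in detail (reproducing the argument of \cite[Lemma~1]{zhang2} to locate a specific $(j_0,n_0)$ where $\|A_{n_0}(j_0)\vec s(j_0)\|<1<\|A_{n_0}(j_0)\vec u(j_0)\|$), whereas you invoke that case as known from the introduction; both are acceptable, though the paper's self-contained treatment avoids relying on an external reference for a statement it is in the process of generalizing.
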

\begin{proof}
To prove the first part, we split it into two different cases. 

\noindent \textbf{Case I}: $\inf_{j\in\Z}|\det(B(j))|>0$. As we mentioned after Definition~\ref{d:domination}, this case can be reduced to $\mathrm{SL}(2,\C)$ which allows us to use the proof of \cite[Lemma 1]{zhang2}. We include a proof for the sake of completeness. Define a new sequence 
\[
A(j)=\frac1{\sqrt{\det(B(j))}}B(j)\in\mathrm{SL}(2,\C).
\]
It is clear that $A$ and $B$ share the same $E^s$ and $E^u$. It is also clear that $\|A\|_\infty<\infty$. We may still assume that $\|A(j)\|<M$ for all $j\in\Z$. Moreover, it is clear that  $\SVG$ is equivalent to the following uniform exponential growth condition in this case: there is a $\l>1$ such that
\begin{equation}\label{eq:ueg}
\mathrm{(UEG)}_{c,\l} \hskip .5cm \inf_{j\in\Z}\|A_n(j)\|\ge c\lambda^n\mbox{ for all }n\ge 1.
\end{equation}
In fact, we may simply choose $\l=\sqrt{\mu}$. Now we claim that there exists a pair $(c, \lambda)$ such that $A$ satisfies both $\mathrm{(UEG)}_{c, \lambda}$ and:
\begin{equation}\label{eq:upbound}
\mbox{for all }N\ge 1, \mbox{ there exists a } j_0\in\Z\mbox{ and }n_0\ge N\mbox{ such that }\|A_{n_0}(j_0)\|<c\lambda^{\frac32n_0}.
\end{equation}
Indeed, let's begin with a pair $(c_0,\l_0)$ so that we have $\mathrm{(UEG)}_{c_0,\l_0}$. If \eqref{eq:upbound} holds true for $(c_0, \l_0)$, then we are done. Otherwise, there is a $N_1$ such that 
\[
\|A_n(j)\|\ge c_0\l_0^{3/2} \mbox{ for all } j\ge N_1.
\]
Now we set $\lambda_1=\l_0^{\frac32}$ and $c_1=\min\{c_0, \lambda_1^{-N_1}\}$. We claim that $A$ satisfies $\mathrm{(UEG)}_{c_1,\lambda_1}$. Indeed, for $n<N_1$, it is a trivial estimate that
\[
\|A_n(j)\|\ge 1\ge c_1\l^{N_1}\ge c_1\l^{n}.
\]
For $n\ge N_1$, it clearly holds that 
\[
\|A_n(j)\|\ge c_0\lambda^{\frac32 n}\ge c_1\l_1^{\frac32}.
\] 
We may repeat this process with the new pair $(c_1, \l_1)$. This process must terminate at step $k$ where $\l_0^{(3/2)^k}>M$. Thus we obtain a pair $(c,\l)$ where we have both $\mathrm{(UEG)_{c,\l}}$ and \eqref{eq:upbound}. From now on, we work with such a pair. Let $j_0$ and $n_0\ge N$ be from \eqref{eq:upbound} for some $N\ge 1$. Note that $\mathrm{(UEG)}_{c,\lambda}$ is equivalent to $\SVG$ with $\mu=\lambda^2$. Hence, by \eqref{eq:s_nUconvToEs}, we have 
\[
d(E^s(j_0), s_n(j_0))<C\lambda^{-2n}.
\]
Recall that $\vec s(j)$ denotes a unit vector in $E^s(j)$. Write
\[
\vec s(j_0)=c_1\vec s_n(j_0)+c_2\vec s^\perp_n(j_0),
\]
where $|c_1|^2+|c_2|^2=1$ and $|c_2|=d(E^s(j_0), s_n(j_0))$. Then we have 
\begin{align*}
\|A_{n_0}(j_0)\vec s(j_0)\|&\le \|A_n(j_0)\vec s_n(j_0)\|+|c_2|\|A_n(j_0)\vec s^\perp_n(j_0)\|\\
&\le \|A_{n_0}(j_0)\|^{-1}+C\lambda^{-2n_0}\|A_{n_0}(j_0)\|\\
&\le C\lambda^{-n_0}+C\l^{3/2n_0}\l^{-2n_0}\\
&\le C\lambda^{-n_0/2}<1,
\end{align*}
where in the last step we simply choose $N$ large enough. Replacing $A_n(j_0)$, $E^s(j_0)$, and $s_n(j_0)$ by $A_{-n_0}(j_0+n_0)$, $E^u(j_0+n_0)$, and $u_{n_0}(j+n_0)$ respectively, the same process above yields
\[
\|A_{-n_0}(j_0+n_0)\vec u(j+n_0)\|\le C\lambda^{-n_0/2}<1.
\]
Since $A_{n_0}(j_0)[E^u(j_0)]=E^u(j_0+n_0)$ and $A_{-n_0}(j_0+n_0)=[A_{n_0}(j_0)]^{-1}$, we obtain
\[
\|A_{n_0}(j)\vec u(j_0)\|=\frac{\|A_{n_0}(j_0)A_{-n_0}(j_0+n_0)\vec u(j+n_0)\|}{\|A_{-n_0}(j_0+n_0)\vec u_{n_0}(j+n_0)\|}=\frac1{\|A_{-n_0}(j_0+n_0)\vec u_{n_0}(j+n_0)\|}>1,
\]
which clearly implies $E^s(j_0)\neq E^u(j_0)$. By $A$-invariance, we then extend it to $j\in\Z$.
\vskip .4cm

 \noindent \textbf{Case II }: $\inf_{j\in\Z}|\det(B(j))|=0$. Let $(\Omega, T, F)$ be as in Prop~\ref{p:seq_to_cocycle_svgFI}. Then we must have some $\tilde\omega$ so that $\det(\tilde \omega_0)=0$. Indeed, if this is not true, then $|\det(\omega_0)|>0$ for all $\omega\in\Omega$. It is clear that $|\det(\omega_0)|$ is continuous in $\omega$. This implies that $\inf_{\omega}|\det(\omega_0)|>0$ which in particular implies $\inf_{j\in\Z}|\det(B(j))|>0$ since $B(j)=T^j(B)(0)$ and $T^jB\in\Omega$.

For each $\omega \in \Omega$, define $B^{\omega}:\Z\to\MM(2,\C)$ so that $B^\omega(j)=F(T^j \omega)$ for all $j \in \Z$. By Proposition~\ref{p:seq_to_cocycle_svgFI}, $B^\omega$ satisfies $\SVG$ and $\FI$ with the same constant $\mu$ for all $\omega$. In paticular, we may let $s_n^\omega(j)=s(B^{\omega}_n(j))$ and $u^\omega_n(j)=s([B_{n}^{\omega}(j-n)]^*)$. Then we define $s_n(\omega)=s(F_n(\omega))$ and $u_n(\omega)=s(F_{-n}(\omega))$. It is clear that we have $s_n(\omega)=s_n^\omega(0)$ and $u_n(\omega)=u_n^\omega(0).$

Now for each $\omega$, we could treat the sequence $\left\{B^\omega(j), j \in \Z\right\}$ as the sequence $\{B(j), j \in \Z\}$. By Lemma~\ref{l:SVGImpliesExistence} and taking $j=0$ for each $\omega$, we obtain
$$
d(s_n(\omega),s_{n+1}(\omega))<C \mu^{-n} \text { and } d(u_n(\omega), u_{n+1}(\omega))<C \mu^{-n}.
$$
Hence, there exist $E^s, E^u: \Omega \rightarrow \C\PP^1$ such that
$$
\lim _{n \rightarrow \infty} s_n=E^s \text { and } \lim _{n \rightarrow \infty} u_n=E^u
$$
and it holds for all $n\ge 1$ that
\begin{equation}\label{eq:Cocycle_unUnivConvEu}
\sup_{\omega\in\Omega}d(s_n(\omega),E^s(\omega)<C \mu^{-n}\mbox{ and } \sup_{\omega\in\Omega}d(u_n(\omega),E^u(\omega)<C \mu^{-n}.
\end{equation}
In particular, for each $\omega$, it holds that
$$
F(\omega) \cdot E^s(\omega)=B^\omega(0) (E^s)^\omega(0)=(E^s)^\omega(1)=E^s(T \omega);
$$
that is, $E^s$ is $(T, F)$-invariant. Similarly, $E^u$ is $(T, F)$-invariant as well. Moreover, $F_n(\omega)\notin\mathscr D$ for all $\omega$ and all large $n$ which together with Lemma~\ref{l:svd_smooth} implies that $s_n(\omega)$ and $u_n(\omega)$ are continuous in $\omega$. By \eqref{eq:Cocycle_unUnivConvEu}, $E^s$ and $E^u$ are continuous maps. Now assume for the sake of contradiciton that $E^s(j)=E^u(j)$ for some $j$. By the $B$-invariance and the fact $\det(B(j))\neq 0$ for all $j$, we obtain that $E^s(j)=E^s(j)$ for all $j$. This in particular means $E^s(T^jB)=E^s(T^jB)$. By continuity of $E^s(\omega)$ and $E^u(\omega)$ and the fact that $\{T^jB: j\in\Z\}$ is dense in $\Omega$, we obtain that $E^s(\omega)=E^u(\omega)$ for all $\omega\in\Omega$.

On the other hand, applying Lemma~\ref{l:EsNeqEu_singularCase} to $\tilde\omega$, we obtain $E^s(\tilde\omega)\neq E^u(\tilde\omega)$. This clearly contradicts $E^s(j)=E^u(j)$ for some $j$ which concludes the proof of the first part of this lemma.

Now apply Lemma~\ref{l:EsNeqEu_singularCase} and the part we have just proved to all $B^\omega(j)=F(T^j\omega)$, we obtain $E^s(\omega)\neq E^u(\omega)$ for all $\omega$. By continuity of $E^s$ and $E^u$ and compactness of $\Omega$, we obtain
$$
\inf d(E^u(\omega), E^s(\omega)>0.
$$
Since $E^{u(s)}(j)=E^{u(s)}(T^jB)$, we then obtain
$$
\inf_{j\in\Z}d(E^u(j), E^s(j))>0,
$$
as desired.
\end{proof}

Finally we show that $E^u$ dominates $E^s$ as stated in condition (b) of Definition~\ref{d:domination}.
\begin{lemma}\label{l:EuDominatesEs}
	There exists a $\lambda>1$ and $N\in Z_+$ such that it holds for all $j\in\Z$ and all unit vectors $\vec s(j)\in E^s(j)$ and $\vec u(j)\in E^u(j)$ that
	\[
	\|B_N(j)\vec u(j)\|>\lambda \|B_N(j) \vec s(j)\|.
	\]
	\end{lemma}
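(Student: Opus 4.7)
The plan is to exploit two facts established earlier: first, that $s_n(j)$ converges uniformly to $E^s(j)$ at rate $C\mu^{-n}$ by \eqref{eq:s_nUconvToEs}; second, that $\delta:=\inf_{j\in\Z} d(E^u(j),E^s(j))>0$ by Lemma~\ref{l:EsAwayFromEu}. Together these say that $E^u(j)$ is bounded away from the most contracted direction of $B_n(j)$, while $E^s(j)$ lies very close to it. Since the SVD of $B_n(j)$ maps the orthogonal pair $\vec s_n(j),\vec s_n^{\perp}(j)$ to an orthogonal pair of vectors of lengths $\sigma_2(B_n(j))$ and $\sigma_1(B_n(j))$, decomposing $\vec u(j)$ and $\vec s(j)$ in this basis will immediately produce the exponential gap.

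Concretely, I would fix $j\in\Z$, a large $n$, and write
\[
\vec u(j)=c_1\vec s_n(j)+c_2\vec s_n^{\perp}(j),\qquad \vec s(j)=d_1\vec s_n(j)+d_2\vec s_n^{\perp}(j),
\]
with $|c_1|^2+|c_2|^2=|d_1|^2+|d_2|^2=1$. By \eqref{eq:Dist_to_Det2}, $2|c_2|=d(u(j),s_n(j))$ and $2|d_2|=d(s(j),s_n(j))$. The triangle inequality combined with \eqref{eq:s_nUconvToEs} yields
\[
2|c_2|\ge d(E^u(j),E^s(j))-d(E^s(j),s_n(j))\ge \delta-C\mu^{-n},\qquad 2|d_2|\le C\mu^{-n},
\]
so that $|c_2|\ge \delta/4$ and $|d_2|\le C\mu^{-n}/2$ for $n$ sufficiently large, uniformly in $j$. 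Using orthogonality of $B_n(j)\vec s_n(j)$ and $B_n(j)\vec s_n^{\perp}(j)$ and the Pythagorean identity, I get
\[
\|B_n(j)\vec u(j)\|^2\ge |c_2|^2\sigma_1^2(B_n(j))\ge (\delta/4)^2\sigma_1^2(B_n(j)),
\]
\[
\|B_n(j)\vec s(j)\|^2\le \sigma_2^2(B_n(j))+C^2\mu^{-2n}\sigma_1^2(B_n(j))/4.
\]

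To finish, I need $\sigma_2(B_n(j))/\sigma_1(B_n(j))=O(\mu^{-n})$ uniformly in $j$. This is the weak form \eqref{eq:wSVG} of $\SVG$, which follows from the actual $\SVG$ together with the trivial upper bound $\sigma_1(B_{n+1}(j))=\|B(j+n)B_n(j)\|\le M\sigma_1(B_n(j))$; indeed
\[
\frac{\sigma_2(B_n(j))}{\sigma_1(B_n(j))}=\frac{\sigma_2(B_n(j))}{\sigma_1(B_{n+1}(j))}\cdot\frac{\sigma_1(B_{n+1}(j))}{\sigma_1(B_n(j))}\le CM\mu^{-n}.
\]
Plugging this in, the ratio $\|B_n(j)\vec s(j)\|/\|B_n(j)\vec u(j)\|$ is bounded above by a constant multiple of $\mu^{-n}$, uniformly in $j$ and in the choice of unit vectors. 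Choosing $N$ large enough that this bound becomes, say, $1/2$, and setting $\lambda=2$, gives the claim.

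There is no substantial obstacle here: the entire argument is an orthogonal decomposition in the SVD basis, fed by the two quantitative inputs already proved (uniform separation of $E^u$ and $E^s$, and exponential approach of $s_n$ to $E^s$). The only minor point worth flagging is the conversion of $\SVG$ to the ratio $\sigma_2(B_n)/\sigma_1(B_n)$; this needs nothing beyond the uniform bound $\|B(j)\|\le M$.
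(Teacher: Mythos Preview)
Your proposal is correct and follows essentially the same route as the paper: both decompose $\vec u(j)$ and $\vec s(j)$ in the orthonormal basis $\{\vec s_n(j),\vec s_n^{\perp}(j)\}$, use \eqref{eq:s_nUconvToEs} together with $\inf_j d(E^u(j),E^s(j))>0$ to control the coefficients, and then invoke the exponential ratio $\sigma_2(B_n(j))/\sigma_1(B_n(j))\le C\mu^{-n}$ to conclude. The only cosmetic differences are that you exploit the orthogonality of $B_n(j)\vec s_n(j)$ and $B_n(j)\vec s_n^{\perp}(j)$ via Pythagoras (the paper uses the triangle inequality instead), and you spell out explicitly how the weak form \eqref{eq:wSVG} follows from $\SVG$ and $\|B(j)\|\le M$, which the paper leaves implicit.
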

\begin{proof}
	By Lemma~\ref{l:EsAwayFromEu}, there is a $\delta>0$ such that $d(E^u(j), E^s(j))>\delta$ for all $j\in\Z$. This together with \eqref{eq:s_nUconvToEs} and \eqref{eq:u_nUconvToEu} implies that for all $n$ large:
	\[
	d(s_n(j), E^u(j))>\frac\delta2.
	\]
	Writing $\vec u(j)=c_1\vec s_n(j)+c_2\vec s^\perp_n(j)$ and $\vec s(j)=d_1\vec s_n(j)+d_2\vec s^\perp_n(j)$ where $|d_2|=d(E^s(j), s_n(j))$ and $|c_2|=d(E^u(j), s_n(j))$ , we obtain for all $n$ large that
	\begin{align*}
	\frac{\|B_n(j)\vec s(j)\|}{\|B_n(j)\vec u(j)\|}&\le \frac{|d_1|\sigma_2(B_n(j))+|d_2|\sigma_1(B_n(j))}{|c_2|\sigma_1(B_n(j))-|c_1|\sigma_2(B_n(j))}\\
	&= \frac{|d_1|\frac{\sigma_2(B_n(j))}{\sigma_1(B_n(j))}+|d_2|}{|c_2|-|c_1|\frac{\sigma_2(B_n(j))}{\sigma_1(B_n(j))}}\\
	&\le \frac{C\mu^{-n}}{\delta/2-C\mu^{-n}}.
	\end{align*}
The desired result follows by choosing $N$ large so that $C\mu^{-n}<\frac\delta4$ and $\lambda=2$. 
	\end{proof}
	
	\subsection{Domination for $\MM(2,\C)$-Cocycles}
It is worthwhile to point out the equaivalence between domination of dynamically $\MM(2,\C)$-cocycles and their $\SVG$ and $\FI$. Let $(\Omega, T)$ and $A\in C(\Omega, \MM(2,\C))$ be the same as we described right after Lemma~\ref{l:EsNeqEu_singularCase}. Recall the following definition of dominated splitting for $\MM(2,\C)$-cocycles from \cite[Section 5]{alkornzhang}:
	
	\begin{defi}\label{d:domination_dynamical}
		Let $(\Omega,T)$ and $B$ be as above. Then we say $(T,B)$ has dominated spliting if there are two maps $E^s, E^u:\Omega\to \C\PP^1$ with the following properties:
		\begin{enumerate}[(a)]
			\item $E^s, E^u\in C(\Omega, \C\PP^1)$. In other words, they are continuous.
			\item $B(\omega)[E^s(\omega)]\subseteq E^s(T\omega)$ and $B(\omega)[E^u(\omega)]\subseteq E^u(T\omega)$ for all $\omega\in\Omega$. 
			\item There is a $N\in\Z_+$ and $\l>1$ such that  
			$$
			\|B_N(\omega)\vec u\|> \l \|B_N(\omega)\vec s\|
			$$
			for all $\omega\in\Omega$ and all unit vectors $\vec u\in E^u(\omega)$ and $\vec s\in E^s(\omega)$.
		\end{enumerate}
	\end{defi}
As noted in \cite[Lemma 13]{alkornzhang}, we have the following facts. First, we have $B_\omega(j)=A(T^j\omega)$ satsifies Definition~\ref{d:domination} for all $\omega\in\Omega$. Moreover, it holds that:
\begin{prop}\label{p:equiv_conditions_DS_dyna}
	In the context of Definition~\ref{d:domination_dynamical}, condition (a) in Definition~\ref{d:domination_dynamical} is equivalent to 
\[
\inf_{\omega\in\Omega}d(E^s(\omega),E^u(\omega))>0.
\]
\end{prop}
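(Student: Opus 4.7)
The plan is to prove both implications. The forward direction (continuity implies positive infimum) is a soft compactness argument. Condition (c) of Definition~\ref{d:domination_dynamical} forces $E^s(\omega)\neq E^u(\omega)$ pointwise: if $E^s(\omega)=E^u(\omega)$ then picking the same unit vector as $\vec u$ and $\vec s$ would contradict $\|B_N(\omega)\vec u\|>\lambda\|B_N(\omega)\vec s\|$ since $\lambda>1$. Hence $\omega\mapsto d(E^s(\omega),E^u(\omega))$ is a continuous, strictly positive function on the compact space $\Omega$, so it is bounded below by some $\delta>0$.

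For the substantive direction ($\inf d>0$ implies continuity), the plan is to realize $E^s$ and $E^u$ as \emph{uniform} limits of the continuous maps $s_n(\omega):=s(F_n(\omega))$ and $u_n(\omega):=u(F_n(T^{-n}\omega))$, using the convergence estimates from Section~\ref{s:obtainDomination}. The first main step is to verify that the cocycle $(T,A)$ satisfies $\SVG$ with constants \emph{uniform} in $\omega$. Following the conjugation setup of Section~\ref{s:dominationsImplies}, set $D(\omega):=(\vec u(\omega),\vec s(\omega))$. The hypothesis $\inf_\omega d(E^s(\omega),E^u(\omega))>0$ together with \eqref{eq:boundedD} yields $\inf_\omega |\det D(\omega)|>0$, while $\|D(\omega)\|\le 2$ and $\sup_\omega\|A(\omega)\|<\infty$ by continuity on compact $\Omega$. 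The conjugate $\Lambda(\omega):=D(T\omega)^{-1}A(\omega)D(\omega)$ is diagonal, inherits the \emph{same} uniform parameters $(N,\lambda)$ from condition (c), and satisfies the uniform lower bound $\inf_\omega|\lambda^+_\omega|>0$ by the uniform analog of \cite[Remark 1]{alkornzhang}. Rerunning the proof of Lemma~\ref{l:DominationImpliesWSVG} with these uniform constants on each orbit sequence gives uniform $\SVG$ for $\Lambda$, which transfers to $(T,A)$ through the uniformly bounded conjugation exactly as in the proof of the last lemma of Section~\ref{s:dominationsImplies}.

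Uniform $\SVG$ has two consequences that finish the argument. First, it forces $F_n(\omega)\notin\mathscr D$ for all $\omega$ once $n$ is large enough, so $s_n$ and $u_n$ are well-defined there and, by Lemma~\ref{l:svd_smooth}, continuous on $\Omega$. Second, applying the proof of Lemma~\ref{l:SVGImpliesExistence} to each orbit sequence $B^\omega(j)=A(T^j\omega)$ with the uniform constants yields
\[
\sup_{\omega\in\Omega}d(s_n(\omega),E^s(\omega))<C\mu^{-n},\qquad \sup_{\omega\in\Omega}d(u_n(\omega),E^u(\omega))<C\mu^{-n}.
\]
Thus $E^s$ and $E^u$ are uniform limits of continuous maps from the compact metric space $\Omega$ to $\C\PP^1$, and so they are continuous.

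The main obstacle is the uniformity in the first step. Pointwise application of Theorem~\ref{t:main} to each $B^\omega$ only produces $\SVG$ constants that a priori depend on $\omega$, which would be insufficient to upgrade the pointwise convergence $s_n(\omega)\to E^s(\omega)$ to uniform convergence. The hypothesis $\inf d(E^s,E^u)>0$ is used precisely to control the conjugating matrices $D(\omega),D(\omega)^{-1}$ in a uniform manner, and this is what makes the soft argument go through.
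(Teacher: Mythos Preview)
The paper does not prove this proposition; it records it as a consequence of \cite[Lemma~13]{alkornzhang}, so there is no in-paper argument to compare against. Your argument is correct, with two small points worth tightening.

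First, the uniform bound $\inf_\omega|\lambda^+_\omega|>0$ is not quite a pointwise citation of \cite[Remark~1]{alkornzhang} applied orbit by orbit (that would only yield an $\omega$-dependent infimum). The uniform statement follows because if $|\lambda^+_{\omega_k}|\to 0$ along some sequence, then $\|A_N(\omega_k)\vec u(\omega_k)\|=\prod_{i=0}^{N-1}|\lambda^+_{T^i\omega_k}|\to 0$, hence by condition (c) also $\|A_N(\omega_k)\vec s(\omega_k)\|\to 0$; the hypothesis $|\det D(\omega_k)|\ge\delta/2$ then forces $\|A_N(\omega_k)\|\to 0$, contradicting continuity of $\omega\mapsto\|A_N(\omega)\|$ on compact $\Omega$ together with its pointwise positivity (which itself follows from (c), since $\|A_N(\omega)\vec u(\omega)\|>\lambda\|A_N(\omega)\vec s(\omega)\|$ with $\lambda>1$ rules out $A_N(\omega)=0$).

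Second, the proof of Lemma~\ref{l:SVGImpliesExistence} only shows that $s_n$ is Cauchy with \emph{some} limit; it does not by itself identify that limit with the given $E^s$ from Definition~\ref{d:domination_dynamical}. That identification follows from your uniform version of Lemma~\ref{l:DominationImpliesWSVG}, which gives $\|A_n(\omega)\vec s(\omega)\|\le C\mu^{-n}\sigma_1(A_n(\omega))$, combined with Lemma~\ref{l:z_close_to_s} applied with $z=E^s(\omega)$ and $A=A_n(\omega)$. You already have these ingredients in hand; just make the identification explicit. The analogous remark applies to $u_n$ and $E^u$.
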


\begin{theorem}\label{t:domination=svg_fi:cocycles}
	$(T,A)$ has dominated splitting if and only if it satisfies both $\SVG$ and $\FI$.
	\end{theorem}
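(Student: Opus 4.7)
The plan is to reduce Theorem~\ref{t:domination=svg_fi:cocycles} to its sequence counterpart Theorem~\ref{t:main} by a fiberwise argument. For each $\omega\in\Omega$, the sequence $B^\omega(j):=A(T^j\omega)$ belongs to $\ell^\infty(\Z,\MM(2,\C))$, so Theorem~\ref{t:main} applies to it; continuity of $A$ together with compactness of $\Omega$ will be used throughout to promote pointwise-in-$\omega$ estimates to uniform ones.

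For the ``only if'' direction, assume $(T,A)$ has dominated splitting in the sense of Definition~\ref{d:domination_dynamical}. By Proposition~\ref{p:equiv_conditions_DS_dyna}, $\inf_{\omega\in\Omega}d(E^s(\omega),E^u(\omega))>0$. For each fixed $\omega$, the sequence $B^\omega$ satisfies Definition~\ref{d:domination} with invariant lines $E^{s/u}_\omega(j):=E^{s/u}(T^j\omega)$: (a) is the fiberwise version of Definition~\ref{d:domination_dynamical}(b), (c) comes from the previous infimum, (b) uses the uniform $N,\lambda$ of Definition~\ref{d:domination_dynamical}(c), and (d) follows from continuity of $A_N$ plus compactness of $\Omega$. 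The proofs in Section~\ref{s:dominationsImplies} applied to $B^\omega$ produce $\SVG$ and $\FI$ with constants depending only on $M$, $N$, $\lambda$, the uniform separation infimum, and a uniform lower bound on the diagonal entries $\lambda^+$ of the conjugated sequence; each of these inputs is uniform in $\omega$ (the last one because $\lambda^+(\omega)$ is continuous and nonvanishing on the compact $\Omega$), so the resulting $\SVG$/$\FI$ constants are uniform and the cocycle version follows.

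For the converse, assume $(T,A)$ satisfies $\SVG$ and $\FI$. Since the bounds are uniform in $\omega$, every fiber sequence $B^\omega$ satisfies the sequence versions of $\SVG$ and $\FI$ with the same constants, so by Theorem~\ref{t:main} each $B^\omega$ has a dominated splitting with lines $E^{s/u}_\omega(j)$. Define $E^{s/u}(\omega):=E^{s/u}_\omega(0)$. The identity $B^{T\omega}(j)=B^\omega(j+1)$ yields $E^{s/u}_{T\omega}(0)=E^{s/u}_\omega(1)$, which combined with the sequence-level invariance $B^\omega(0)[E^{s/u}_\omega(0)]\subseteq E^{s/u}_\omega(1)$ delivers Definition~\ref{d:domination_dynamical}(b). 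Continuity of $E^{s/u}$ is obtained exactly as in Case~II of the proof of Lemma~\ref{l:EsAwayFromEu}: the maps $\omega\mapsto s_n(\omega)=s(F_n(\omega))$ and $\omega\mapsto u_n(\omega)$ are continuous for $n$ large since $\SVG$ forces $F_n(\omega)\notin\mathscr D$ and Lemma~\ref{l:svd_smooth} provides smoothness there, while the uniform estimate \eqref{eq:Cocycle_unUnivConvEu} exhibits $E^{s/u}$ as a uniform limit of continuous functions. Applying Lemmas~\ref{l:EsNeqEu_singularCase} and~\ref{l:EsAwayFromEu} fiberwise together with continuity and compactness gives $\inf_\omega d(E^s(\omega),E^u(\omega))>0$, and then Lemma~\ref{l:EuDominatesEs} applied with uniform constants produces $N$ and $\lambda$ yielding Definition~\ref{d:domination_dynamical}(c).

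The main obstacle is not conceptual but bookkeeping: one must verify that every constant in Sections~\ref{s:dominationsImplies} and~\ref{s:obtainDomination} can be taken independent of $\omega$ in both directions. This is routine once one notes that the inputs to those proofs (the cocycle data on one side, the $\SVG$/$\FI$ constants on the other) are themselves uniform in $\omega$, and that the continuity/compactness machinery already deployed in Lemma~\ref{l:EsAwayFromEu} handles the remaining uniformization.
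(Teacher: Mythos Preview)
Your proposal is correct and follows essentially the same route as the paper's own proof: both directions are reduced fiberwise to the sequence version (Theorem~\ref{t:main} and the arguments of Sections~\ref{s:dominationsImplies}--\ref{s:obtainDomination}), with continuity of $s_n,u_n$ via Lemma~\ref{l:svd_smooth}, uniform convergence to $E^{s/u}$, and compactness of $\Omega$ supplying the required uniformity. Your write-up is somewhat more explicit than the paper's about tracking which constants are uniform in $\omega$ (for instance the lower bound on $|\lambda^+|$ and on $\|A_N\|$), but the underlying argument is the same.
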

	\begin{proof}
		The only if part follows from the same argument of Section~\ref{s:dominationsImplies}. Indeed, we only need to replace $j\in\Z$ by $\omega\in\Omega$.
		
		For the if part, we bascially apply the argument of Section~\ref{s:obtainDomination} to each sequence $B_\omega(j)=A(T^j\omega)$. Moreover concretely, just like in the proof of Lemma~\ref{l:EsAwayFromEu}, we obtain $E^s(\omega)$ and $E^u(\omega)$ as the limits of $s_n(\omega)=s(A_n(\omega))$ and $u_n(\omega)=s(A^*_n(T^{-n}\omega))^\perp$, respectively, where the convergence in uniform in $\omega\in\Omega$. The same proof implies that $E^s$ and $E^u$ are $A$-invariance (hence we have Condition (a)) and $E^s(\omega)\neq E^u(\omega)$ for all $\omega\in\Omega$. By Lemma~\ref{l:svd_smooth}, $s_n$ and $u_n$ are continuous in $\omega$ for all large $n$ which implies the continuity of $E^s$ and $E^u$. Hence, by compactness of $\Omega$, we have 
		\[
		\delta:=\inf_{\omega\in\Omega}d(E^s(\omega), E^u(\omega))>0,
		\] 
		which by Proposition~\ref{p:equiv_conditions_DS_dyna} implies Condition (b). Finally, we may follow the proof of Lemma~\ref{l:EuDominatesEs} to obtain Condition (c). In particular, we can set $\lambda=2$ and the choice of $N$ is independent of $\omega$ as it only depends on $\delta$ and $\mu$. 
		\end{proof}

\section{An Avalanche Principle for $\MM(2,\C)$-sequences}

In this section, we prove Theorem~\ref{t:APSingular}. Recall that by \eqref{eq:svg_decomp}, for any $Q\in\MM(2,\C)$, it holds that 
	$$
	Q=U(Q)\begin{pmatrix}\sigma_1(Q) &0\\ 0 &\sigma_2(Q)\end{pmatrix}V^*(Q),
	$$	
	where $V(Q)\cdot\infty=s(Q)$ and $U(Q)\cdot 0=u(Q)$. Let $E_1, E_2\in\MM(2,\C)$. We write $V(E_2)=(\vec v_1, \vec v_2)$, $U(E_1)=(\vec u_1, \vec u_2)$, where all vectors are column vectors. We also write
	\[
	V^*(E_2)U(E_1)=\begin{pmatrix}c_1&c_2\\ c_3&c_4\end{pmatrix}\in \mathrm{U}(2).
	\] 
	Note $V^*=V^{-1}$ which implies:
	\begin{equation}\label{eq:dist_su}
	|c_1|=|\det(\vec v_2, \vec u_1)|=d(V(E_2)\cdot\infty, U(E_1)\cdot 0)=d(s(E_2), u(E_1)).
	\end{equation}

	\begin{lemma}\label{l:NormToAngle}
		Let $E_1,E_2\in \MM(2,\C)$ satisfying 
		\begin{equation}\label{eq:ProdNormLarge}
\sigma_1(E_2E_1)>C^2 \max\left\{\sigma_1(E_1)\sigma_2(E_2),\sigma_1(E_2)\sigma_2(E_1)\right\}.
		\end{equation}
		Then it holds that 
		\begin{align}\label{eq:anglenorm}
		&cd\big(s(E_2),u(E_1)\big)<\frac{\sigma_1(E_2E_1)}{\sigma_1(E_2)\sigma_1(E_1)}<Cd\big(s(E_2),u(E_1)\big)\\
	\label{eq:anglenorm2}
		&\left|\frac{\sigma_1(E_2E_1)}{\sigma_1(E_2)\sigma_1(E_1)}-d(s(E_2),u(E_1))\right|<C\max\left\{\frac{\sigma_2(E_2)}{\sigma_1(E_2)},\ \frac{\sigma_2(E_1)}{\sigma_1(E_1)}\right\}
		\end{align}
	\end{lemma}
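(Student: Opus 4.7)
The plan is to reduce $\sigma_1(E_2E_1)$ to the operator norm of an explicit $2\times 2$ matrix whose entries are governed by singular values and by $d(s(E_2), u(E_1))$, via the singular value decompositions of $E_1$ and $E_2$. Writing $E_i = U(E_i)\Lambda(E_i)V^*(E_i)$ and recalling $W = V^*(E_2)U(E_1)\in \mathrm{U}(2)$, the unitary invariance of the operator norm yields
\[
\frac{\sigma_1(E_2E_1)}{\sigma_1(E_2)\sigma_1(E_1)} \;=\; \left\|\begin{pmatrix} c_1 & r_1 c_2 \\ r_2 c_3 & r_1 r_2 c_4 \end{pmatrix}\right\| \;=:\; \|M\|,
\]
where $r_i := \sigma_2(E_i)/\sigma_1(E_i)\in[0,1]$ and the $c_k$ are the entries of $W$. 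Since $W$ is unitary, $|c_k|\le 1$; and by \eqref{eq:dist_su}, $|c_1|=d(s(E_2),u(E_1))$.

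For \eqref{eq:anglenorm2}, I would split $M=M_0+M_1$ with $M_0$ having $c_1$ in the $(1,1)$-entry and zeros elsewhere, so that $\|M_0\|=|c_1|$. Using $|c_k|\le 1$ together with the elementary bound $\|A\|\le 2\|A\|_{\max}$ on $\MM(2,\C)$, one immediately gets $\|M_1\|\le 2\max\{r_1,r_2\}$. The reverse triangle inequality for the operator norm then yields
\[
\bigl|\|M\|-|c_1|\bigr|\le \|M_1\|\le 2\max\{r_1,r_2\},
\]
which is exactly \eqref{eq:anglenorm2} after identifying $\|M\|$ and $|c_1|$ with the quantities in the statement.

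For \eqref{eq:anglenorm}, the hypothesis \eqref{eq:ProdNormLarge}, once divided through by $\sigma_1(E_2)\sigma_1(E_1)$, becomes $\|M\|>C^2\max\{r_1,r_2\}$. Combining with \eqref{eq:anglenorm2} gives $|c_1|>(C^2-2)\max\{r_1,r_2\}$, so for $C$ chosen sufficiently large (absorbing the constant 2), $\max\{r_1,r_2\}<(4/C^2)|c_1|$. Re-inserting this into the two-sided bound from the previous paragraph produces
\[
(1-8/C^2)\,|c_1|\;<\;\|M\|\;<\;(1+8/C^2)\,|c_1|,
\]
which is \eqref{eq:anglenorm} with suitable universal constants $c$ small and $C$ large.

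The crux of the whole argument is the conjugation $E_2 E_1 = U(E_2)\Lambda(E_2) W \Lambda(E_1) V^*(E_1)$, which turns an intricate norm question into elementary $2\times 2$ linear algebra; after that, both estimates reduce to routine bookkeeping, and I do not anticipate any serious obstacle. The only mild care needed is to choose the large constant in \eqref{eq:ProdNormLarge} quantitatively large enough relative to the absolute constant $2$ coming from $\|A\|\le 2\|A\|_{\max}$, so that $\|M\|$ is actually controlled by $|c_1|$ from above and below.
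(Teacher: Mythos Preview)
Your proposal is correct and follows essentially the same approach as the paper: both arguments use the singular value decompositions to rewrite $U^*(E_2)E_2E_1 V(E_1)=\Lambda(E_2)\,W\,\Lambda(E_1)$, isolate the $(1,1)$-entry $c_1\sigma_1(E_2)\sigma_1(E_1)$, and bound the remaining three entries by $C\max\{\sigma_1(E_2)\sigma_2(E_1),\sigma_2(E_2)\sigma_1(E_1)\}$ via the triangle inequality. The only cosmetic differences are that you normalize by $\sigma_1(E_2)\sigma_1(E_1)$ at the outset (working with $r_i$) and derive \eqref{eq:anglenorm2} before \eqref{eq:anglenorm}, whereas the paper keeps the unnormalized matrix and reverses the order; neither difference is substantive.
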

	\begin{proof}
	By singular value decomposition of $E_1$ and $E_2$, one has 
		$$
		E_2E_1=U(E_2)\begin{pmatrix}\sigma_1(E_2) &0\\ 0 &\sigma_2(E_2)\end{pmatrix}V^*(E_2)U(E_1)\begin{pmatrix}\sigma_1(E_1) &0\\ 0 &\sigma_2(E_1)\end{pmatrix}V^*(E_1),
		$$
	which implies that
		$$
		U^*(E_2)\cdot E_2E_1\cdot V(E_1)-\begin{pmatrix}c_1\sigma_1(E_2)\sigma_1(E_1) &0\\ 0 &0\end{pmatrix}=\begin{pmatrix}0 &c_2\sigma_1(E_2)\sigma_2(E_1)\\ c_3\sigma_2(E_2)\sigma_1(E_1) &c_4\sigma_2(E_2)\sigma_2(E_1)\end{pmatrix}.
		$$
		Since $\mathrm{U}(2)$-matrices preserve the operator norm, triangle inequality yields
		\begin{equation}\label{eq:AngleNorm2}
		\left|\sigma_1(E_2E_1)-c_1\sigma_1(E_2)\sigma_1(E_1)\right|<C\max\big\{\sigma_1(E_2)\sigma_2(E_1),\ \sigma_2(E_2)\sigma_1(E_1)\big\}.
		\end{equation}
		Combining \eqref{eq:AngleNorm2} with \eqref{eq:ProdNormLarge}, we obtain
		$$
		c|c_1|\sigma_1(E_2)\sigma_1(E_1)<\sigma_1(E_2E_1)<C|c_1|\sigma_1(E_2)\sigma_1(E_1),
		$$ 
	which together with \eqref{eq:dist_su} clearly implies \eqref{eq:anglenorm}. Now divide \eqref{eq:AngleNorm2} by $\sigma_1(E_2)\sigma_1(E_1)$ at both sides we obtain
		$$
		\left|\frac{\sigma_1(E_2E_1)}{\sigma_1(E_2)\sigma_1(E_1)}-d(s(E_2),u(E_1))\right|<C\max\left\{\frac{\sigma_2(E_2)}{\sigma_1(E_2)},\ \frac{\sigma_2(E_1)}{\sigma_1(E_1)}\right\}
		$$
		which is nothing other than \eqref{eq:anglenorm2}.
	\end{proof}

	\begin{corollary}\label{c:NormToAngle}
		
			Let $E_2, E_1\in\MM(2,\mathbb C)$ be such that   $\frac{\sigma_1(E_1)\sigma_1(E_2)}{\sigma_1(E_2E_1)}\le \mu^{1/4}$ and $\frac{\sigma_2(E_i)}{\sigma_1(E_i)}\le\mu^{-1}$ for $i=1,2$. Then it holds that
		\beq\label{eq:ap_us_1st_sep}
		d\big(s(E_2),u(E_1)\big)>c\mu^{-\frac14}.
		\eeq
	\end{corollary}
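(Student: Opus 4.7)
My plan is to apply the refined estimate \eqref{eq:anglenorm2} of Lemma~\ref{l:NormToAngle} directly to the pair $(E_1,E_2)$. The first hypothesis can be rewritten as
\[
\frac{\sigma_1(E_2E_1)}{\sigma_1(E_2)\sigma_1(E_1)}\ge \mu^{-\frac14},
\]
and this is precisely the quantity controlled on the left-hand side of \eqref{eq:anglenorm2}. So once Lemma~\ref{l:NormToAngle} is applicable, the conclusion should drop out of a reverse-triangle-inequality argument combining this lower bound with the error term on the right of \eqref{eq:anglenorm2}.

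First I would verify the applicability hypothesis \eqref{eq:ProdNormLarge}. Using $\sigma_2(E_i)\le \mu^{-1}\sigma_1(E_i)$ for $i=1,2$, one immediately gets
\[
\max\bigl\{\sigma_1(E_1)\sigma_2(E_2),\ \sigma_1(E_2)\sigma_2(E_1)\bigr\}\le \mu^{-1}\,\sigma_1(E_1)\sigma_1(E_2),
\]
whereas the first hypothesis forces $\sigma_1(E_2E_1)\ge \mu^{-1/4}\sigma_1(E_1)\sigma_1(E_2)$. Since $\mu$ is assumed large, the ratio $\mu^{-1/4}/\mu^{-1}=\mu^{3/4}$ dominates the fixed constant $C^2$, so \eqref{eq:ProdNormLarge} is indeed satisfied.

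With Lemma~\ref{l:NormToAngle} in hand, \eqref{eq:anglenorm2} combined with the assumption $\sigma_2(E_i)/\sigma_1(E_i)\le \mu^{-1}$ gives
\[
\left|\frac{\sigma_1(E_2E_1)}{\sigma_1(E_2)\sigma_1(E_1)}-d\bigl(s(E_2),u(E_1)\bigr)\right|\le C\mu^{-1}.
\]
A reverse triangle inequality with the lower bound $\mu^{-1/4}$ on the first term then yields
\[
d\bigl(s(E_2),u(E_1)\bigr)\ \ge\ \mu^{-\frac14}-C\mu^{-1}\ \ge\ c\mu^{-\frac14},
\]
where the last step uses that $\mu$ is large enough that $C\mu^{-3/4}<\tfrac12$, absorbing the error term into the main order.

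I do not expect any genuine obstacle: the only subtlety is simply checking \eqref{eq:ProdNormLarge} so that \eqref{eq:anglenorm2} can be invoked, and then observing that the $\mu^{-1}$ error is of strictly smaller order than the $\mu^{-1/4}$ main term for $\mu$ large. The corollary is essentially a quantitative repackaging of \eqref{eq:anglenorm2} in the regime where the singular value gap $\sigma_2/\sigma_1$ is much smaller than the product-norm deficit $\sigma_1(E_2E_1)/(\sigma_1(E_1)\sigma_1(E_2))$.
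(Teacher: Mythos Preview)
Your argument is correct and follows essentially the same route as the paper: verify \eqref{eq:ProdNormLarge} via $\sigma_1(E_2E_1)\ge \mu^{-1/4}\sigma_1(E_1)\sigma_1(E_2)>\mu^{3/4}\max\{\sigma_1(E_1)\sigma_2(E_2),\sigma_1(E_2)\sigma_2(E_1)\}$, then invoke Lemma~\ref{l:NormToAngle}. The only cosmetic difference is that the paper concludes via the multiplicative bound \eqref{eq:anglenorm}, giving $d\big(s(E_2),u(E_1)\big)>c\,\sigma_1(E_2E_1)/(\sigma_1(E_2)\sigma_1(E_1))\ge c\mu^{-1/4}$ directly, whereas you use the additive estimate \eqref{eq:anglenorm2} and a reverse triangle inequality; both yield the same result with no substantive difference.
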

	\begin{proof}
	By the given conditions, we clearly have
		\begin{align*}
		\sigma_1\big(E_2E_1\big)\ge \sigma_1(E_2)\sigma_1(E_1)\mu^{-\frac14}
		>\mu^{\frac34}\max\left\{\sigma_1(E_1)\sigma_2(E_2), \sigma_2(E_1))\sigma_1(E_2)\right\}.
		\end{align*}
		Thus the condition of Lemma~\ref{l:NormToAngle} is satisfied which in turn implies
		$$
		d\big(s(E_2),u(E_1)\big)>c\frac{\sigma_1(E_2E_1)}{\sigma_1(E_2)\sigma_1(E_1)}\ge c\mu^{-\frac14}.
		$$
	\end{proof}

	\begin{lemma}\label{l:NormDirectionControl1}
		Let $E_2, E_1\in\MM(2,\mathbb C)$ such that  $d(s(E_2),u(E_1))>c\mu^{-\frac14}$ and $\frac{\sigma_2(E_i)}{\sigma_1(E_i)}\le\mu^{-1}$ for $i=1,2$. Let $E=E_2E_1$. Then it holds that
		\begin{align}
			\label{eq:NormControl1}
			&\frac{\sigma_2(E)}{\sigma_1(E)}<C\frac{\sigma_2(E_1)\sigma_2(E_2)}{\sigma_1(E_1)\sigma_1(E_2)}\cdot d(s(E_2),u(E_1))^{-2}<C\mu^{-\frac32},\\
			\label{eq:SControl1}
			&d\big(s(E_1),s(E)\big)<C\frac{\sigma_2(E_1)}{\sigma_1(E_1)}d(s(E_2),u(E_1))^{-1}<C\mu^{-\frac34},\\
			\label{eq:UControl1}
			&d(u(E_2),u(E))<C\frac{\sigma_2(E_2)}{\sigma_1(E_2)}d(s(E_2),u(E_1))^{-1}<C\mu^{-\frac34}.
		\end{align}
	\end{lemma}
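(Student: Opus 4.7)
The plan is to push further the SVD expansion used in Lemma~\ref{l:NormToAngle}. Writing $W:=V^*(E_2)U(E_1)=\bigl(\begin{smallmatrix}c_1&c_2\\ c_3&c_4\end{smallmatrix}\bigr)\in\mathrm{U}(2)$, we have $U^*(E_2)EV(E_1)=\Lambda(E_2)W\Lambda(E_1)$ with $|c_1|=d(s(E_2),u(E_1))$ by \eqref{eq:dist_su}. Reading off the first column of this product and noting that $V(E_1)\vec{e}_1$ is a unit vector, I obtain the key lower bound
\[
\sigma_1(E)\ge\|E\,V(E_1)\vec{e}_1\|\ge |c_1|\,\sigma_1(E_1)\sigma_1(E_2)\ge c\mu^{-1/4}\sigma_1(E_1)\sigma_1(E_2),
\]
which will power all three estimates.

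For \eqref{eq:NormControl1}, I would combine this with the determinant identity $\sigma_1(E)\sigma_2(E)=|\det E|=\sigma_1(E_1)\sigma_2(E_1)\sigma_1(E_2)\sigma_2(E_2)$ to get
\[
\frac{\sigma_2(E)}{\sigma_1(E)}=\frac{|\det E|}{\sigma_1(E)^2}\le C\,\frac{\sigma_2(E_1)\sigma_2(E_2)}{\sigma_1(E_1)\sigma_1(E_2)}\,d(s(E_2),u(E_1))^{-2},
\]
and the final bound $C\mu^{-3/2}$ follows from $\sigma_2(E_i)/\sigma_1(E_i)\le\mu^{-1}$ and $d(s(E_2),u(E_1))^{-2}\le C\mu^{1/2}$.

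For \eqref{eq:SControl1}, I would apply Lemma~\ref{l:z_close_to_s} to $A=E$ with $z=s(E_1)$. Since $E_1\vec{s}(E_1)$ has norm exactly $\sigma_2(E_1)$, we have $\|E\vec{s}(E_1)\|\le\sigma_1(E_2)\sigma_2(E_1)$, so
\[
d(s(E_1),s(E))\le\frac{2\sigma_1(E_2)\sigma_2(E_1)+2\sigma_2(E)}{\sigma_1(E)}.
\]
The lower bound on $\sigma_1(E)$ from the first step controls the first summand by $C\sigma_2(E_1)/\sigma_1(E_1)\cdot d(s(E_2),u(E_1))^{-1}$. A quick calculation shows the ratio of the second summand to the first is $\sigma_2(E)/(\sigma_2(E_1)\sigma_1(E_2))\le C(\sigma_2(E_2)/\sigma_1(E_2))\cdot d(s(E_2),u(E_1))^{-1}\le C\mu^{-3/4}$, so the second summand is absorbed into the first, yielding \eqref{eq:SControl1}.

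For \eqref{eq:UControl1}, the plan is to invoke duality rather than rerun the argument. A direct check of $A^*=V(A)\Lambda(A)U^*(A)$ shows $s(A^*)=u^\perp(A)$ and $u(A^*)=s^\perp(A)$, so by \eqref{eq:orth_prev_dist},
\[
d(u(E_2),u(E))=d(s(E_2^*),s(E^*))=d(s(E_2^*),s(E_1^*E_2^*)).
\]
The hypotheses of the lemma transfer verbatim to the product $E^*=E_1^*E_2^*$ since $\sigma_i(A^*)=\sigma_i(A)$ and $d(s(E_1^*),u(E_2^*))=d(u^\perp(E_1),s^\perp(E_2))=d(s(E_2),u(E_1))$. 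Applying the just-proved \eqref{eq:SControl1} to this transposed product, with $F_1=E_2^*$ and $F_2=E_1^*$, delivers \eqref{eq:UControl1}. The only delicate point in the whole argument is the absorption step for \eqref{eq:SControl1}; everything else is a mechanical unpacking of the single SVD identity at the top.
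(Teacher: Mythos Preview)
Your proof is correct and shares the same skeleton as the paper's: the SVD identity $U^*(E_2)EV(E_1)=\Lambda(E_2)W\Lambda(E_1)$, the lower bound $\sigma_1(E)\ge |c_1|\sigma_1(E_1)\sigma_1(E_2)$, the determinant identity for \eqref{eq:NormControl1}, and the passage to adjoints for \eqref{eq:UControl1} are all exactly what the paper does.

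The one genuine difference is your treatment of \eqref{eq:SControl1}. The paper works with the conjugated matrix $D=\Lambda(E_2)W\Lambda(E_1)$, observes that $d(s(E_1),s(E))=d(\infty,s(D))=:\gamma$, and then runs a short case analysis: if $\gamma\le\sigma_2(E_1)/\sigma_1(E_1)$ the bound is trivial, and otherwise one shows $\|D\vec e_2\|\ge c\gamma\,\sigma_1(D)$ by comparing the $\vec s(D)$ and $\vec s^\perp(D)$ components. You instead apply Lemma~\ref{l:z_close_to_s} directly to $A=E$, $z=s(E_1)$, and then absorb the $\sigma_2(E)/\sigma_1(E)$ term using the already-proved \eqref{eq:NormControl1}. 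Your route is shorter and avoids the case split entirely, at the cost of importing Lemma~\ref{l:z_close_to_s} from Section~\ref{s:obtainDomination}; the paper's route is self-contained within the SVD computation. Both buy the same estimate with the same constants.
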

	\begin{proof}
		Let  $G:=V^*(E_2)U(E_1)=\left(\begin{smallmatrix}c_1&c_2\\ c_3&c_4\end{smallmatrix}\right)\in \mathrm{U}(2)$. Recall by \eqref{eq:dist_su}, we have $|c_1|=d(s(E_2), u(E_1))\ge c\mu^{-\frac14}$.  Define $D$ to be
		$$
		D=
		\begin{pmatrix}
			\sigma_1(E_2)&0\\0&\sigma_2(E_2)\end{pmatrix}G\begin{pmatrix}\sigma_1(E_1)&0\\0&\sigma_2(E_1)\
		\end{pmatrix}
		=
		\begin{pmatrix}c_1\sigma_1(E_1)\sigma_1(E_2)  &c_2\sigma_1(E_2)\sigma_2(E_1)\\ c_3\sigma_2(E_2)\sigma_1(E_1) &c_4\sigma_2(E_1)\sigma_2(E_2)\end{pmatrix}.
		$$
		It is clear that $\sigma_1(D)\ge c|c_1|\sigma_1(E_1)\sigma_1(E_2) $, which implies
		\begin{align}\label{eq:large_D_norm}
		\nonumber \frac{\sigma_2(E)}{\sigma_1(E)}&=	\frac{\sigma_2(D)}{\sigma_1(D)}=\frac{|\det(D)|}{\sigma_1^2(D)}\\
		&\le C|c_1|^{-2}\frac{\sigma_2(E_1)\sigma_2(E_2)}{\sigma_1(E_1)\sigma_1(E_2)}\\
		\nonumber &<C\mu^{-3/2},
		\end{align}
		which takes care of \eqref{eq:NormControl1}. Let $\vec e=\binom{0}{1}$. By the form of $D$, it is clearly that
		$$
		\|D\vec e\|\le |c_2|\sigma_1(E_2)\sigma_2(E_1)+|c_4|\sigma_2(E_1)\sigma_2(E_2).
		$$
		Let $\gamma=d\big(\infty, s(D)\big)$.  It is clear that $s(DV^*(E_1))=V(E_1)\cdot s(D)$ which implies that
		\begin{align*}
			d\big(s(E_1),s(E)\big)&=d\big(s(E_1),s(DV^*(E_1))\big)\\
			&=d\big(V(E_1)\cdot\infty, V(E_1)s(D)\big)\\
			&=d\big(\infty, s(D)\big)\\
			&=\gamma,
		\end{align*}
		where the third equality follows from the fact that $\mathrm{U}(2)$-matrices preserve the distance $d$. Writing $\vec e=d_1\vec s(D)+d_2\vec s^{\perp}(D)$. Then it is clear that $|d_2|=d(\vec e, \vec s(D))=d(\infty, s(D))=\gamma$.
		
		If $\gamma\le \frac{\sigma_2(E_1)}{\sigma_1(E_1)}$, then \eqref{eq:SControl1} is automatically true as we clearly have $d\big(s(E_2),u(E_1)\big)^{-1}>c$. So we only need to deal with the case where $\gamma>\frac{\sigma_2(E_1)}{\sigma_1(E_1)}$, which together with the proof of  \eqref{eq:large_D_norm} implies that
		\begin{align*}
			\|d_2D\vec s^{\perp}(D)\|&>\frac{\sigma_2(E_1)}{\sigma_1(E_1)}\sigma_1(D)\\
			&>\frac{\sigma_2(E_1)}{\sigma_1(E_1)}|c_1|^{2}\frac{\sigma_1(E_1)\sigma_1(E_2)}{\sigma_2(E_1)\sigma_2(E_2)}\sigma_2(D)\\
			&=|c_1|^{2}\frac{\sigma_1(E_2)}{\sigma_2(E_2)}\sigma_2(D)\\
			&>c\mu^{\frac12}\sigma_2(D)\\
			&>C|d_1D\vec s(D)|,
		\end{align*}
		and hence, $\|D\vec e\|=\|d_1D\vec s(D)+d_2D\vec s^{\perp}(D)\|\ge \|d_2D\vec s^{\perp}(D)\|-\|d_1D\vec s(D\|\ge c\|d_2D\vec s^{\perp}(D)\|$. Combining all the estimates above, we obtain
		\begin{align*}
			|\gamma|&=|d_2|<\frac{C}{\sigma_1(D)}\|D\vec e\|\\
			&<\frac{C}{|c_1|\sigma_1(E_1)\sigma_1(E_2)}(|c_2|\sigma_1(E_2)\sigma_2(E_1)+|c_4|\sigma_2(E_1)\sigma_2(E_2))\\
			&<C\frac{\sigma_2(E_1)}{\sigma_1(E_1)}d(s(E_2),u(E_1))^{-1}
		\end{align*}
		which is nothing other than  \eqref{eq:SControl1}. Running the same argument above with $E^{*}$, $D^{*}$, $s(E^*)=u^\perp(E)$, $s(D^*)=u(D)^\perp$, and $s(E_2^*)=u(E^*_2)^\perp$, one obtains \eqref{eq:UControl1}.
	\end{proof}
	
	The following lemma push the estimates in Lemma~\ref{l:NormDirectionControl1} to all $n\ge 2$. Recall that $s_n(j)=s(B_n(j))$ and $u_n(j)=s(B^*_n(j-n))^\perp$.
	\begin{lemma}\label{l:NormDirectionControln}
		Let $B:\Z\to\MM(2,\C)$ be as in Theorem~\ref{t:APSingular}. Then it holds for each $j\in\Z$ and each $n\ge 2$ that 
		\begin{align}
			\label{eq:NormControl} \frac{\sigma_2(B_n(j))}{\sigma_1(B_n(j))}&\le C\mu^{-\frac{n+1}{2}},\\
			\label{eq:SControl}d\big(s_n(j),s_{n-1}(j)\big)&<C\mu^{-\frac{n-1}2},\\
			\label{eq:UControl}d\big(u_n(j),u_{n-1}(j)\big)&<C\mu^{-\frac{n-1}2}.
		\end{align}
	\end{lemma}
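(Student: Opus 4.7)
The plan is to prove all three estimates simultaneously by induction on $n \ge 2$, with each of the three bounds at level $n+1$ obtained from a different factorization $B_{n+1}(\cdot) = E_2 E_1$ to which Lemma~\ref{l:NormDirectionControl1} applies.

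\textbf{Base case $n=2$.} Apply Corollary~\ref{c:NormToAngle} to $E_1 = B(j)$, $E_2 = B(j+1)$: hypotheses \eqref{condition-AP3} and \eqref{condition-AP4} are precisely what the corollary requires, so
\[
d\bigl(s_1(j+1), u_1(j+1)\bigr) = d\bigl(s(B(j+1)), u(B(j))\bigr) > c\mu^{-1/4}.
\]
The two hypotheses of Lemma~\ref{l:NormDirectionControl1} then hold, and \eqref{eq:NormControl1}--\eqref{eq:UControl1} immediately give \eqref{eq:NormControl}--\eqref{eq:UControl} at $n=2$ (with the better bound $C\mu^{-3/4}$ in place of $C\mu^{-1/2}$ on the direction changes); for \eqref{eq:UControl} note that $u(B_2(j))$ lives over site $j+2$, so a shift $j \mapsto j-2$ recovers the stated form.

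\textbf{Inductive step.} Assume \eqref{eq:NormControl}--\eqref{eq:UControl} hold at level $n$. The key reusable fact is a telescoping angle bound: summing the inductive $s$-increments geometrically yields
\[
d\bigl(s_n(i), s_1(i)\bigr) \le \sum_{k=2}^{n} d\bigl(s_k(i), s_{k-1}(i)\bigr) \le \sum_{k=2}^{n} C\mu^{-(k-1)/2} \le C\mu^{-1/2},
\]
and analogously $d(u_n(i), u_1(i)) \le C\mu^{-1/2}$. Combined with the base-case separation $d(s_1(i), u_1(i)) > c\mu^{-1/4}$ and the triangle inequality, for $\mu$ large we obtain
\[
d\bigl(s_n(i), u_1(i)\bigr) > c\mu^{-1/4} \quad \text{and} \quad d\bigl(s_1(i), u_n(i)\bigr) > c\mu^{-1/4}
\]
at any site $i$. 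Together with the inductive norm-ratio $\sigma_2/\sigma_1 \le C\mu^{-(n+1)/2} \le \mu^{-1}$ (for $n \ge 2$ and $\mu$ large), the hypotheses of Lemma~\ref{l:NormDirectionControl1} are met for each of the three factorizations below.

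\textbf{Propagating the three bounds.} For \eqref{eq:NormControl} at $n+1$ we factor $B_{n+1}(j) = B_n(j+1) B(j)$ with $E_1 = B(j)$, $E_2 = B_n(j+1)$; the required angle $d(s_n(j+1), u_1(j+1))$ is $\ge c\mu^{-1/4}$ by the above, and \eqref{eq:NormControl1} yields
\[
\frac{\sigma_2(B_{n+1}(j))}{\sigma_1(B_{n+1}(j))} \le C \cdot \mu^{-1} \cdot \mu^{-(n+1)/2} \cdot \mu^{1/2} = C\mu^{-(n+2)/2}.
\]
For \eqref{eq:SControl} at $n+1$ we factor $B_{n+1}(j) = B(j+n) B_n(j)$ with $E_1 = B_n(j)$, $E_2 = B(j+n)$; the angle is now $d(s_1(j+n), u_n(j+n)) \ge c\mu^{-1/4}$, and \eqref{eq:SControl1} gives $d(s_n(j), s_{n+1}(j)) \le C\mu^{-(n+1)/2} \cdot \mu^{1/4} = C\mu^{-(2n+1)/4} \le C\mu^{-n/2}$. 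For \eqref{eq:UControl} at $n+1$ we use the symmetric factorization $B_{n+1}(j-n-1) = B_n(j-n) B(j-n-1)$ with $E_1 = B(j-n-1)$, $E_2 = B_n(j-n)$; this places $u(E_2) = u_n(j)$ and $u(E_2 E_1) = u_{n+1}(j)$, the angle $d(s_n(j-n), u_1(j-n)) \ge c\mu^{-1/4}$ is controlled exactly as before, and \eqref{eq:UControl1} delivers $d(u_n(j), u_{n+1}(j)) \le C\mu^{-(2n+1)/4} \le C\mu^{-n/2}$.

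The main obstacle is the angle verification in the inductive step: a priori one might fear that after $n$ steps the stable direction $s_n$ has drifted far enough to collide with some $u$-direction, destroying the hypothesis of Lemma~\ref{l:NormDirectionControl1}. The estimates are tuned precisely to preclude this, as the drift forms a geometric series of ratio $\mu^{-1/2}$ bounded by $C\mu^{-1/2}$, while the initial separation $c\mu^{-1/4}$ is of strictly larger order; choosing $\mu$ large once and for all absorbs every constant and keeps the induction self-sustaining.
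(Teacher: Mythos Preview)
Your proof is correct and follows essentially the same inductive scheme as the paper. The only cosmetic differences are the choice of factorization for \eqref{eq:NormControl} (you split off $B(j)$ on the right, the paper splits off $B(j{+}n)$ on the left---either works since \eqref{eq:NormControl1} is symmetric in $E_1,E_2$) and a slightly coarser telescoping bound ($C\mu^{-1/2}$ versus the paper's $C\mu^{-3/4}$, both $\ll c\mu^{-1/4}$); note that your inductive hypothesis should be phrased ``for all $2\le k\le n$'' since the telescoping sum uses every level.
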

	\begin{proof}
		We proceed by induction on $n$. Note for the case $n=2$, \eqref{eq:NormControl} and \eqref{eq:SControl} follow from \eqref{eq:NormControl1} and \eqref{eq:SControl1} by setting $E_1=B(j)$ and $E_2=B(j+1)$ and the fact $\mu^{-\frac34}<\mu^{-\frac12}$. Similarly, \eqref{eq:UControl} follows from \eqref{eq:UControl1} if we set $E_1=B(j-2)$ and $E_2=B(j-1)$. 
		
		Assuming that \eqref{eq:NormControl}-\eqref{eq:SControl} hold true for all $n=2,\ldots, k$ and all $j\in\Z$.  Then we want to move to the case $n=k+1$. First, it holds that
		\begin{align}
			\nonumber d\big(u_{k}(j+k),u_1(j+k)\big)&\le \sum^{k-1}_{l=1}d\big(u_{l+1}(j+k),u_l(j+k)\big)\\
						\label{eq:AngleControl0}&\le C\mu^{-\frac34}+C\sum^{k-1}_{l=2}\mu^{-\frac{l}2}\\
\nonumber &\le C\mu^{-\frac34}.
		\end{align}
Also, applying Corollary~\ref{c:NormToAngle} to $E_2=B(j)$ and $E_1=B(j-1)$, we obtain for all $j\in\Z$ that
\[
d(s_1(j), u_1(j))=d(s(B(j)), u(B(j-1)))\ge c\mu^{-\frac14}.
\]
Combining the two estimates above, we obtain that
		\begin{align}
			\nonumber d\big(s_1(j+k), u_k(j+k)\big)
			&\ge d\big(s_1(j+k),u_1(j+k)\big)-d\big(u_1(j+k),u_k(j+k)\big)\\
	\label{eq:AngleControl}			&\ge c\mu^{-\frac14}-C\mu^{-\frac34}\\
		\nonumber &\ge c\mu^{-\frac14}.
		\end{align}
		
				Thus, we may apply Lemma~\ref{l:NormDirectionControl1} with $E_1=B_k(j)$ and $E_2=B(j+k)$ to obtain the following two estimates. First, we get that
		\begin{align*}
			\frac{\sigma_2(B_{k+1}(j))}{\sigma_1(B_{k+1}(j))}
			&\le\frac{\sigma_2(B(j+k))\sigma_2(B_{k}(j))}{\sigma_1(B(j+k))\sigma_1(B_{k}(j))}d\big(s_1(j+k), u_k(j+k)\big)^{-1}\\
			&<C\mu^{-1}\mu^{-\frac{k+1}{2}}\mu^{\frac14}\\
			&<C\mu^{-\frac{k+2}{2}}
		\end{align*}
		which takes care of \eqref{eq:NormControl} for $n=k+1$. Next, we obtain
		\begin{align}
			\nonumber d(s_{k+1}(j), s_k(j))&=d\big(s[B(k+j)B_k(j)], s(B_k(j))\big)\\
			\nonumber &<C\frac{\sigma_2(B_k(j))}{\sigma_1(B_k(j))}\cdot d(s_1(j+k), u_k(j+k))^{-2}\\
			\nonumber &<C\mu^{-\frac{k+1}2}\mu^{\frac12}\\
			&=C\mu^{-\frac k2},
		\end{align}
		which clearly takes care of the \eqref{eq:SControl} for $n=k+1$.

		Similarly, by the same argument of \eqref{eq:AngleControl0}, it holds that $d\big(s_k(j-k-1), s_1(j-k-1)\big)\le C\mu^{-\frac34}$. Together with Corollary~\ref{c:NormToAngle}, we then obtain
		\begin{align}
			\nonumber &d\big(s_k(j-k-1), u_1(j-k-2)\big)\\
			\nonumber \ge & d\big(s_1(j-k-1),u_1(j-k-2)\big)-d\big(s_k(j-k-1),s_1(j-k-1)\big)\\
			\label{eq:AngleControl2}\ge & c\mu^{-\frac14}-C\mu^{-\frac34}\\
			\nonumber \ge & c\mu^{-\frac14}.
		\end{align}
Now combining \eqref{eq:AngleControl2} and \eqref{eq:UControl1} with $E_1=B(k-j-2)$ and $E_2=B_k(j-k-1)$, we obtain
		\begin{align}
			\nonumber d\big(u_{k+1}(j),u_k(j)\big)&=d\big(u[B_k(j-k-1)B(j-k-2)], u(B_k(j-k-1))\big)\\
			\nonumber &<C\frac{\sigma_2(B_k(j-k-1))}{\sigma_1(B_k(j-k-1))}\cdot d\big(s_k(j-k-1),u(j-k-2)\big)^{-2}\\
			\nonumber &<C\mu^{-\frac{k+1}2}\mu^{\frac12}\\
			&=C\mu^{-\frac{k}{2}},
		\end{align}
		which takes of  \eqref{eq:UControl} for step $n=k+1$, concluding the proof.
	\end{proof}
	
	Now, we are ready to prove Theorem~\ref{t:APSingular}.
	\begin{proof}[Proof of Theorem~\ref{t:APSingular}]
		First, we show that $B:\Z\to\MM(2,\C)$ had dominated splitting. Setting $k=n$ in \eqref{eq:AngleControl}, we obtain $d(s_1(j+n), u_n(j+n))\ge c\mu^{-\frac14}$. By the proof of Lemma~\ref{l:NormDirectionControl1}, we have
		\begin{align*}
		\sigma_1(B_{n+1}(j))&\ge cd(s_1(j+n), u_n(j+n))\sigma_1(B_n(j))\sigma_1(B(j+n))\\
		&\ge c\mu^{-\frac14}\sigma_1(B_n(j))\sigma_1(B(j+n)),
		\end{align*}
		Hence, we have:
		\begin{equation}\label{eq:svg1}
		\frac{\sigma_1(B_{n}(j))}{\sigma_1(B_{n+1}(j))}\le \frac{\mu^{\frac14}}{c\sigma_1(B(j+n))}\le C\mu^{\frac14},
		\end{equation}
		which together with \eqref{eq:NormControl} implies 
		\begin{align}\label{eq:fi1}
		\frac{\sigma_2(B_n(j))}{\sigma_1(B_{n+1}(j))}\le C\mu^{\frac14}\frac{\sigma_2(B_n(j))}{\sigma_1(B_n(j))}\le C\mu^{-\frac14}\mu^{-\frac{n}{2}}.
		\end{align}
	   Similarly, setting $k=n$ and replacing $j-k-2$ by $j$ in \eqref{eq:AngleControl2} yield
	   \[
	   d(s_n(j+1), u_1(j))>c\mu^{-\frac14},
	   \]
	   which implies
	   \begin{align*}
	   	\sigma_1(B_{n+1}(j))&\ge cd(s_n(j+1), u_1(j+n))\sigma_1(B_n(j+1))\sigma_1(B(j))\\
	   &\ge c\mu^{-\frac14}\sigma_1(B_n(j+1))\sigma_1(B(j)).
	   \end{align*}
		Hence, we obtain 
			\begin{equation}\label{eq:svg2}
			\frac{\sigma_1(B_{n}(j+1))}{\sigma_1(B_{n+1}(j))}\le \frac{\mu^{\frac14}}{c\sigma_1(B(j))}\le C\mu^{\frac14}
		\end{equation}
		and hence by \eqref{eq:NormControl},
			\begin{align}\label{eq:fi2}
			\frac{\sigma_2(B_n(j+1))}{\sigma_1(B_{n+1}(j))}\le C\mu^{\frac14}\frac{\sigma_2(B_n(j+1))}{\sigma_1(B_n(j+1))}\le C\mu^{-\frac14}\mu^{-\frac{n}{2}}.
		\end{align}
		It is clear that \eqref{eq:svg1} and \eqref{eq:svg1} imply $\SVG$ for $B$ while \eqref{eq:fi1} and \eqref{eq:fi2} imply $\FI$ for $B$. By Theorem~\ref{t:main}, $B$ has dominated splitting.

		For the proof of \eqref{eq:AP}, we first note it holds for all $j\in\Z$ and $n\in\Z_+$ that
		\beq\label{eq:AP1}
		\log\sigma_1(B_n(j))=\log\sigma_1(B(j+n-1))+\log\sigma_1(B_{n-1}(j))+\log\frac{\sigma_1\big(B(j+n-1)B_{n-1}(j)\big)}{\sigma_1(B(j+n-1))\sigma_1(B_{n-1}(j))}.
		\eeq
		We may apply \eqref{eq:AP1} to $\log\sigma_1(B_{n-1}(j))$ and rewrite \eqref{eq:AP1} as 
		\beq\label{eq:AP2}
		\log\sigma_1(B_n(j))=\log\sigma_1(B_{n-2}(j))+\sum^{n-1}_{k=n-2}\log\sigma_1(B(j+k))+\sum^{n-1}_{k=n-2}
		\log\frac{\sigma_1(B(j+k)B_{k}(j))}{\sigma_1(B(j+k))\sigma_1(B_{k}(j))}.
		\eeq
		Apply this process repeatedly to $B_n(j), B_{n-1}(j),\ldots, B_2(j)$, we then obtain
		\beq\label{eq:PreAP}
		\log\sigma_1(B_n(j))=\sum^{n-1}_{k=0}\log\sigma_1(B(j+k))+\sum^{n-1}_{k=1}\log\frac{\sigma_1(B(j+k)B_{k}(j))}{\sigma_1(B(j+k))\cdot\sigma_1(B_{k}(j))}.
		\eeq
		Now for each $k\ge1$, by the proof of Corollary~\ref{c:NormToAngle}, condition of Lemma~\ref{l:NormToAngle} is satisfied for the pair $B(j+k)$ and $B(j+k-1)$. Similarly, \eqref{eq:AngleControl} implies the condition is satisfied for $B(j+k)$ and $B_k(j)$ as well. Thus, applying \eqref{eq:anglenorm2} to both pairs, we obtain
		$$
		\left|\frac{\sigma_1(B(j+k)B_{k}(j))}{\sigma_1(B(j+k))\cdot\sigma_1(B_{k}(j))}-d\big(s_1(j+k),u_k(j+k)\big)\right|<C\mu^{-1}
		$$
		and 
		$$
		\left|\frac{\sigma_1(B(j+k)B(j+k-1))}{\sigma_1(B(j+k))\cdot\sigma_1(B(j+k-1))}-d\big(s_1(j+k),u_1(j+k)\big)\right|<C\mu^{-1}.
		$$
		On the other hand, by \eqref{eq:AngleControl0} it holds that
		\begin{align*}
			&\left| (d\big(s_1(j+k),u_k(j+k)\big)- d\big(s_1(j+k),u_1(j+k)\big)\right|\\
			&\le d\big(u_k(j+k), u_1(j+k)\big)\\
			&\le C\mu^{-\frac34},
		\end{align*}
		where the last inequality follows from \eqref{eq:AngleControl0}. Combine the three inequalities above, we then obtain
		$$
		\left|\frac{\sigma_1(B(j+k)B_{k}(j))}{\sigma_1(B(j+k))\cdot\sigma_1(B_{k}(j))}-\frac{\sigma_1(B(j+k)B(j+k-1))}{\sigma_1(B(j+k))\cdot\sigma_1(B(j+k-1))}\right|<C\mu^{-\frac34}.
		$$
		Apply \eqref{eq:anglenorm} and Corollary~\ref{c:NormToAngle} to $B(j+k)$ and $B(j+k-1)$, we obtain
		$$
		\frac{\sigma_1(B(j+k)B(j+k-1))}{\sigma_1(B(j+k))\cdot\sigma_1(B(j+k-1))}>cd\big(s_1(j+k),u_1(j+k)\big)>c\mu^{-\frac14}.
		$$
		It is straightforward calculus type of estimate that 
		$$
		|\log a -\log b|<C\left|\frac1b(a-b)\right| \mbox{ when } b>C|a-b|. 
		$$
		Thus for each $k\ge1$, the inequality above implies that
		\begin{align*}
			&\left|\log\frac{\sigma_1(B(j+k)B_{k}(j))}{\sigma_1(B(j+k))\cdot\sigma_1(B_{k}(j))}-\log\frac{\sigma_1(B(j+k)B(j+k-1))}{\sigma_1(B(j+k))\cdot\sigma_1(B(j+k-1))}\right|\\
			&\le C\frac{\sigma_1(B(j+k))\cdot\sigma_1(B(j+k-1))}{\sigma_1(B(j+k)B(j+k-1))}\cdot\left|\frac{\sigma_1(B(j+k)B_{k}(j))}{\sigma_1(B(j+k))\cdot\sigma_1(B_{k}(j))}-\frac{\sigma_1(B(j+k)B(j+k-1))}{\sigma_1(B(j+k))\cdot\sigma_1(B(j+k-1))}\right|\\
			&\le
			C\mu^{\frac14}\mu^{-\frac34}\\
			&=C\mu^{-\frac12}.
		\end{align*}
		Combine \eqref{eq:PreAP} and the estimate above, we then obtain
		\begin{align*}
			&\left|\log\sigma_1(B_n(j))-\sum^{n-1}_{k=0}\log\sigma_1(B(j+k))-\sum^{n-1}_{k=1}\log\frac{\sigma_1(B(j+k)B(j+k-1))}{\sigma_1(B(j+k))\cdot\sigma_1(B(j+k-1))}\right|\\
			&=\left|\sum^{n-1}_{k=1}\log\frac{\sigma_1(B(j+k)B_{k}(j))}{\sigma_1(B(j+k))\cdot\sigma_1(B_{k}(j))}-\sum^{n-1}_{k=1}\log\frac{\sigma_1(B(j+k)B(j+k-1))}{\sigma_1(B(j+k))\cdot\sigma_1(B(j+k-1))}\right|\\
			&\le \sum^{n-1}_{k=1}\left|\log\frac{\sigma_1(B(j+k)B_{k}(j))}{\sigma_1(B(j+k))\cdot\sigma_1(B_{k}(j))}-\log\frac{\sigma_1(B(j+k)B(j+k-1))}{\sigma_1(B(j+k))\cdot\sigma_1(B(j+k-1))}\right|\\
			&\le C(n-1)\mu^{-\frac12}\\
			&\le Cn\mu^{-\frac12}.
		\end{align*}
		A direct computation shows that the first line in the estimate above is nothing other than 
		$$
		\left|\log\|B_n(j)\|+\sum_{k=1}^{n-2}\log\|B(j+k)\|-\sum_{k=0}^{n-2}\log\|B(j+k+1)B(j+k)\|\right|,
		$$
		concluding the proof.
	\end{proof}

\end{document}